\documentclass[journal]{IEEEtran}

\usepackage[ruled]{algorithm2e}
\usepackage{algorithmic}

\usepackage[colorlinks=true,linkcolor=blue,citecolor=blue,urlcolor=blue]{hyperref}%hidelinks
\usepackage{graphicx}
\graphicspath{{figures/}}

\usepackage{amsmath}
\usepackage{cases}
\usepackage{amssymb}
\usepackage{amsfonts}
\usepackage{mathtools}

\usepackage{amsthm}

\theoremstyle{definition}
\newtheorem{lemma}{Lemma}
\newtheorem{theorem}{Theorem}
\newtheorem{proposition}{Proposition}

\newtheorem{corollary}{Corollary}
\theoremstyle{definition}
\newtheorem{definition}{Definition}
\newtheorem{assumption}{Assumption}
\newtheorem*{remark}{Remark}
\newtheorem{example}{Example}

\newcommand\norm[2][1]{\left\|#2\right\|_{#1}}
\newcommand\abs[1]{\left|#1\right|}
\newcommand\where[2]{\left.#1\right|_{#2}}
\newcommand\difffrac[3][1]{
	\ifnum #1=1
		\frac{\mathrm{d} #2}{\mathrm{d} #3}
	\else
	\frac{{\mathrm{d}}^{#1} #2}{\mathrm{d} #3^{#1}}
	\fi
}

\newcommand\R{{\mathbb{R}}}
\newcommand\N{{\mathbb{N}}}

\newcommand{\Setminus}[2]{{\left.#1\middle\backslash #2\right.}}
\newcommand\hamilton{{\mathcal{H}}}
\newcommand\st{{\mathrm{s.t.}}}

\renewcommand\vector[1]{\boldsymbol{#1}}
\newcommand\va{{\vector{a}}}
\newcommand\vb{{\vector{b}}}
\newcommand\vc{{\vector{c}}}
\newcommand\vd{{\vector{d}}}
\newcommand\ve{{\vector{e}}}
\newcommand\vf{{\vector{f}}}
\newcommand\vg{{\vector{g}}}

\newcommand\vt{{\vector{t}}}
\newcommand\vx{{\vector{x}}}

\newcommand\vA{{\vector{A}}}
\newcommand\vB{{\vector{B}}}
\newcommand\vC{{\vector{C}}}

\newcommand\vF{{\vector{F}}}

\newcommand\vH{{\vector{H}}}
\newcommand\vI{{\vector{I}}}
\newcommand\vJ{{\vector{J}}}

\newcommand\vP{{\vector{P}}}

\newcommand\vphi{{\vector{\phi}}}
\newcommand\vlambda{{\vector{\lambda}}}
\newcommand\veta{{\vector{\eta}}}

\newcommand\vzero{{\vector{0}}}

\newcommand\tf{{t_\mathrm{f}}}
\newcommand\um{{u_\mathrm{m}}}
\newcommand\e{{\mathrm{e}}}
\newcommand\m{{\mathrm{m}}}
\newcommand\f{{\mathrm{f}}}
\newcommand\rank{{\mathrm{rank}}}
\newcommand\vxf{{\vx_\mathrm{f}}}
\newcommand\sgn{{\mathrm{sgn}}}
\newcommand\const{{\mathrm{const}}}

\newcommand\systembehavior{{\mathcal{S}}}
\newcommand\tangentmarker{{\mathcal{T}}}
\newcommand\switchinglaw{{\mathcal{L}}}
\newcommand\addendconstraint{{\mathcal{E}}}

\usepackage{xcolor}

\newcommand\RomanNum[1]{\uppercase\expandafter{\romannumeral #1}}

\hyphenation{op-tical net-works semi-conduc-tor}

\begin{document}

\title{A Novel State-Centric Necessary Condition for Time-Optimal Control of Controllable Linear Systems Based on Augmented Switching Laws (Extended Version)}

\author{Yunan~Wang,~
        Chuxiong~Hu,~\IEEEmembership{Senior~Member,~IEEE,}
        Yujie~Lin,
        Zeyang~Li,
        Shize~Lin,
        and~Suqin~He
\thanks{Corresponding author: Chuxiong Hu (e-mail: cxhu@tsinghua.edu.cn).
}}

\markboth{IEEE TRANSACTIONS ON AUTOMATIC CONTROL}{}

\maketitle

\begin{abstract}
    Most existing necessary conditions for optimal control based on adjoining methods require both state and costate information, yet the unobservability of costates for a given feasible trajectory impedes the determination of optimality in practice. This paper establishes a novel theoretical framework for time-optimal control of controllable linear systems with a single input, proposing the augmented switching law (ASL) that represents the input control and the feasibility in a compact form. Given a feasible trajectory, the perturbed trajectory under the constraints of ASL is guaranteed to be feasible, resulting in a novel state-centric necessary condition without dependence on costate information. A first-order necessary condition is proposed that the Jacobian matrix of the ASL is not of full row rank, which also results in a potential approach to optimizing a given feasible trajectory with the preservation of arc structures. The proposed necessary condition is applied to high-order chain-of-integrator systems with full box constraints, contributing to some theoretical results challenging to reason by costate-based conditions.

\end{abstract}

\begin{IEEEkeywords}
    Optimal control, linear systems, variational methods, switched systems, necessary condition.

\end{IEEEkeywords}

\IEEEpeerreviewmaketitle

\section{Introduction}\label{sec:Introduction}

\IEEEPARstart{T}{ime-optimal} control for controllable linear systems achieves universal applications in aerospace \cite{trelat2012optimal}, manufacturing \cite{wang2023optimization,wang2025consistency}, robotic control \cite{zhao2020pareto,wang2022learning}, and autonomous driving \cite{guler2016adaptive}. Numerous works have been conducted on the behaviors of the optimal controls, resulting in some well-known necessary or sufficient conditions \cite{hartl1995survey}. However, it remains difficult to fully solve a high-order problem in practice, especially when state inequality constraints are introduced \cite{maurer1977optimal,jacobson1971new}. Specifically, when given a feasible trajectory, it is challenging to determine the trajectory's optimality based on state information since most existing necessary conditions require costate information, especially when the trajectory is not planned by costate-based methods.

In the domain of optimal control theory, numerous necessary conditions are developed based on the variational method \cite{ma2021optimal} and Pontryagin's maximum principle (PMP) \cite{pontryagin1987mathematical}. The former one provides a necessary condition on the extreme of a functional in an open feasible set, while PMP has the potential to deal with inequality constraints on states or controls \cite{makowski1974optimal}. Among them, the state-control inequality constraints pose a challenge in solution since they involve the connection of different arcs. Chang \cite{chang1963optimal}, Dreyfus \cite{dreyfus1960dynamic}, and other scholars developed the direct adjoining method for the connection of unconstrained arcs and constrained arcs, i.e., the junction conditions of costates. Jacobson et al. \cite{jacobson1971new} pointed out that constrained arcs are not allowed when the state constraint is of odd order $p>1$, and instead, the unconstrained arc is tangent to the constraint's boundary at a single point. Makowski and Neustadt \cite{makowski1974optimal} generalized the PMP-based necessary condition to the mixed constraints of state and control. Bryson et al. \cite{bryson1963optimal} developed the necessary condition based on the indirect adjoining method, further supplemented by Kreindler \cite{kreindler1982additional}. The necessary conditions are usually applied to rule out some forms of non-optimal controls in practice. However, the above necessary conditions require information on both states and costates, while costates are usually unobservable in practice, resulting in a challenge in determining the optimality of a given feasible trajectory by costate-based conditions.

\begin{figure*}[!t]
    \centering
    \includegraphics[width=0.85\textwidth]{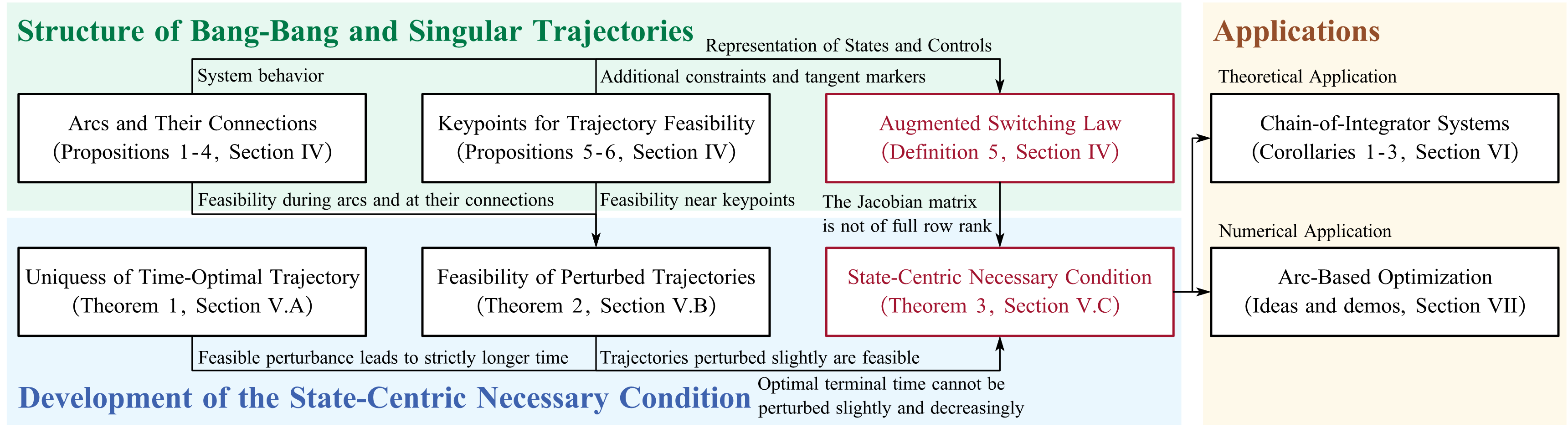}
    \vspace{-2mm}
    \caption{Graphical roadmap of the proposed theoretical framework. Main contributions regarding representations and optimal conditions are highlighted in red.}
    \vspace{-2mm}
    \label{fig:RoadMap_Theorems}
\end{figure*}

Based on PMP, one can get rid of dependence on costates by deriving the optimal trajectory's structure. Switching surfaces can be constructed in the state space for time-optimal control of linear systems. Walther et al. \cite{walther2001computation} computed switching surfaces for some systems using a Gr\"oebner basis. Patil et al. \cite{patil2015computation} represented switching surfaces using state  equations for single-input diagonal linear systems without state constraints. However, for systems with state constraints, it remains challenging to completely construct switching surfaces due to the lack of optimal switching laws. For example, He et al. \cite{he2020time} provided explicit equations for switching surfaces of 3rd-order chain-of-integrator systems with full state constraints, limited to zero terminal states. Wang et al. \cite{wang2025time} extended \cite{he2020time} to non-zero terminal conditions, failing to achieve time-optimality for higher-order systems. Numerical methods \cite{yury2016quasi} face the curse of dimensionality. Hence, there is a lack of efficient and general optimality conditions without dependence on costates for high-order systems with state constraints.

Another type of necessary condition leverages Bellman's principle of optimality \cite{bellman1952theory}, asserting that a sub-arc of an optimal trajectory is itself optimal for the induced sub-problem, without explicitly involving costates. With wide applications in optimal control \cite{tedone2020hamilton,li2024safe}, the Hamilton-Jacobi-Bellman (HJB) equation \cite{bardi1997optimal} serves as a fundamental necessary condition which is also sufficient under suitable regularity conditions. Evans and James \cite{evans1989hamiltonian} investigated the HJB equation for time-optimal control. Wolenski and Zhuang \cite{wolenski1998proximal} showed that the minimal time function is the unique proximal solution to the HJB equation. However, most numerical HJB-based methods require high computational cost \cite{sun2015convergence}. From a theoretical perspective, HJB-based methods also fail to preserve arc structures, which hinders the analytical reasoning.

This paper sets out to establish a theoretical framework for time-optimal control of controllable single-input linear systems and develops a novel \textit{state-centric} necessary condition that requires state information without dependence on costate information. A graphical roadmap is shown in Fig. \ref{fig:RoadMap_Theorems}. Sections \ref{sec:ArcAnalysis} and \ref{sec:KeypointsFeasibilityOptimalTrajectory} derive trajectories' structures on arcs and keypoints for feasibility, respectively. Guided by the knowledge of trajectory structures, Section \ref{sec:StateCentricNecessaryCondition} develops the state-centric necessary condition which is applied from theoretical and numerical perspectives in Sections \ref{sec:Applications} and \ref{sec:simulation} as examples, respectively. Proofs of propositions and theorems are provided in Appendix \ref{sec:proofs}. The contributions of this paper are as follows:
\begin{enumerate}
    \item This paper establishes a novel theoretical framework for time-optimal control of controllable single-input linear systems, proposing the augmented switching law (ASL) which represents structures and feasibility of bang-bang and singular (BBS) trajectories in a compact form. Compared to existing representations of switching surfaces, the ASL can generally deal with high-order systems with state constraints. The developed framework provides a novel variational approach with feasibility assurance, i.e., perturbing the motion time of each arc with a fixed ASL and resulting in feasible perturbed BBS trajectories.

    \item Based on the developed framework, this paper proposes a novel state-centric necessary condition for time-optimality, asserting that the Jacobian matrix induced by the ASL is not of full row rank. The condition is derived from the fact that if a BBS trajectory is optimal, then all perturbed feasible trajectories have a strictly longer terminal time than the original one. Compared to existing costate-based conditions, the proposed state-centric necessary condition only requires state information without dependence on costates. Note that costates are usually unobservable in practice, especially when the trajectory is not planned by costate-based optimization.

    \item This paper proposes a potential optimization idea based on the developed necessary condition. Given a feasible BBS trajectory as the initial value, the motion time is optimized with a fixed ASL until the Jacobian matrix is not of full row rank. During iteration, the feasibility is guaranteed based on the developed ASL framework. Compared to discretized methods, the proposed approach can preserve the BBS arc structure and has the potential to significantly reduce control oscillations and open-loop integration errors of terminal states.
    \item The proposed state-centric condition is applied to high-order chain-of-integrator systems with full state constraints which remains an open problem in the optimal control domain. An upper bound on the number of arcs is determined in a general sense, while case-by-case analysis is necessary in traditional methods. A recursive equation for the junction time when the state is tangent to the constraints' boundaries is derived in chattering arcs induced by 2nd-order constraints, which is challenging to prove by costate analysis. As a result, one could determine the existence of chattering induced by 2nd-order constraints in chain-of-integrator systems in future works. The above results demonstrate the theoretical effectiveness of the proposed necessary condition.
\end{enumerate}

\section{Problem Formulation}\label{sec:ProblemFormulation}

Some notations should be introduced. For an integer $n\in\N$, the set $\left[n\right]$ refers to $\left\{1,2,\dots,n\right\}$. $\left(x_i\right)_{i\in\mathcal{I}}\in\R^{\abs{\mathcal{I}}}$ is a column vector and $\left(x_{ij}\right)_{i\in\mathcal{I},j\in\mathcal{J}}\in\R^{\abs{\mathcal{I}}\times\abs{\mathcal{J}}}$ is a matrix, where $\mathcal{I}$ and $\mathcal{J}$ are index sets. Specifically, $\left(x_i\right)_{i=1}^n\in\R^n$ refers to $\left(x_i\right)_{i\in\left[n\right]}$.

This section formulates the time-optimal control problem for controllable linear systems and introduces some necessary assumptions. The optimal control problem is as follows:
\begin{IEEEeqnarray}{rl}\label{eq:optimalproblem}
    \min_{u}\quad& \tf=\int_{0}^{\tf}\mathrm{d}t,\label{eq:optimalproblem_objective}\IEEEyesnumber\IEEEyessubnumber*\\
    \st\quad&\dot{\vx}\left(t\right)=\vA x\left(t\right)+\vb u\left(t\right),\,\forall t\in\left[0,t_\f\right],\label{eq:optimalproblem_dynamics}\\
    &\vC\vx\left(t\right)+\vd\leq\vzero,\,\forall t\in\left[0,t_\f\right],\label{eq:optimalproblem_inequality_x}\\
    &\vx\left(0\right)=\vx_0,\,\vx\left(\tf\right)=\vx_\tf,\\
    &\abs{u\left(t\right)}\leq \um,\,\forall t\in\left[0,t_\f\right],\label{eq:optimalproblem_inputconstraint}
\end{IEEEeqnarray}
where $\vA\in\R^{n\times n}$, $\vb\in\R^{n}$, $\vC=\left(\vc_p\right)_{p=1}^P\in\R^{P\times n}$, and $\vd=\left(d_p\right)_{p=1}^P\in\R^{P}$, where $\forall p\in\left[P\right]$, $\vc_{p}\not=\vzero$. In problem \eqref{eq:optimalproblem}, the terminal time $\tf$ is free. $\vx=\left(x_k\right)_{k=1}^n\in\R^n$ is the state vector. The state constraint \eqref{eq:optimalproblem_inequality_x} requires that $\forall p\in\left[P\right]$, $\vc_p^\top\vx+d_p\leq0$. The input control $u$ is subject to a box constraint \eqref{eq:optimalproblem_inputconstraint}.

Assume that the system \eqref{eq:optimalproblem_dynamics} is controllable, i.e.,
\begin{equation}\label{eq:controllable}
    \rank\left[\vb,\vA\vb,\vA^2\vb,\dots,\vA^{n-1}\vb\right]=n.
\end{equation}

\begin{remark}
    Generally, an equality constraint $\vF\vx\left(t\right)+\vg=\vzero$ can be incorporated into problem \eqref{eq:optimalproblem}. However, it is not considered in this paper since the equality constraint can be equivalently eliminated by a linear transformation.
\end{remark}

In order to solve \eqref{eq:optimalproblem}, the Hamiltonian is constructed as
\begin{equation}\label{eq:hamilton}
    \begin{aligned}
        &\hamilton\left(\vx\left(t\right),u\left(t\right),\lambda_0,\vlambda\left(t\right),\veta\left(t\right),t\right)\\
        =&\lambda_0+\vlambda^\top\left(\vA\vx+\vb u\right)+\veta^\top\left(\vC\vx+\vd\right).
    \end{aligned}
\end{equation}
Among them, $\lambda_0\geq0$ is a constant. $\vlambda=\left(\lambda_k\right)_{k=1}^n$ is the costate vector. $\forall t\in\left[0,\tf\right]$, $\left(\lambda_0,\vlambda\left(t\right)\right)\in\R^{n+1}$ is not $\vzero$ \cite{maurer1977optimal}. The Euler-Lagrange equations \cite{maurer1977optimal} holds that
\begin{equation}\label{eq:dotlambda}
    \dot\vlambda=-\frac{\partial\hamilton}{\partial\vx}=-\vA^\top\vlambda-\vC^\top\veta.
\end{equation}
In \eqref{eq:hamilton}, $\veta=\left(\eta_p\right)_{p=1}^P$ is the multiplier vector induced by the state inequality constraint \eqref{eq:optimalproblem_inequality_x}, satisfying
\begin{equation}\label{eq:eta_constraint_zero}
    \eta_p\geq0,\,\eta_{p}\left(\vc_p^\top\vx+d_p\right)=0,\,\forall p\in\left[P\right].
\end{equation}
Therefore, $\forall t\in\left[0,t_\f\right]$, $\eta_p\left(t\right)>0$ only if $\vc_p^\top\vx\left(t\right)+d_p=0$.

Pontryagin's maximum principle (PMP) \cite{hartl1995survey} states that the optimal control $u\left(t\right)$ minimizes the Hamiltonian $\hamilton$, i.e.,
\begin{equation}
    u\left(t\right)\in\mathop{\arg\min}\limits_{\abs{U}\leq\um}\hamilton\left(\vx\left(t\right),U,\lambda_0,\vlambda\left(t\right),\veta\left(t\right),t\right).
\end{equation}
By \eqref{eq:hamilton}, $\hamilton$ is affine in $u$; hence,
\begin{equation}\label{eq:bang_singular_bang_law}
    u\left(t\right)=\begin{dcases}
        \um,&\vb^\top\vlambda\left(t\right)<0,\\
        *,&\vb^\top\vlambda\left(t\right)=0,\\
        -\um,&\vb^\top\vlambda\left(t\right)>0.
    \end{dcases}
\end{equation}
$u\left(t\right)\in\left[-\um,\um\right]$ is undetermined during $\vb^\top\vlambda\equiv0$, which is called a singular arc. The notation ``$\equiv$'' means that the equality holds for a continuous period. \eqref{eq:bang_singular_bang_law} is the well-known bang-bang and singular (BBS) control law \cite{pontryagin1987mathematical,silva2010smooth}. In other words, the optimal control is always $\pm\um$, except during singular arcs.

Note that the objective function $\tf=\int_{0}^{\tf}\mathrm{d}t$ is in a Lagrangian form; hence, the continuity of the system is guaranteed by the following equality, i.e.,
\begin{equation}\label{eq:hamilton_equiv_0}
    \forall t\in\left[0,t_\f\right],\,\hamilton\left(\vx\left(t\right),u\left(t\right),\lambda_0,\vlambda\left(t\right),\veta\left(t\right),t\right)\equiv0.
\end{equation}

\begin{assumption}\label{assumption:feasible_optimal}
    Assume that problem \eqref{eq:optimalproblem} is feasible, and the optimal control exists. Furthermore, assume that the chattering phenomenon does not occur in the optimal profile.
\end{assumption}

\begin{remark}
    In Assumption \ref{assumption:feasible_optimal}, the chattering phenomenon \cite{zelikin2012theory} means that the optimal control switches infinitely many times in a finite period. At a one-sided neighborhood of a chattering limit time point, an infinite number of constrained arcs are joined at the boundary of inequality state constraints, where chattering limit time points are usually isolated \cite{zelikin2012theory}. If a chattering phenomenon occurs, then, by Bellman's principle of optimality \cite{bellman1952theory}, a sub-arc of the optimal trajectory is also optimal in the sub-problem. Hence, a sub-arc that contains no chattering limit time points can be considered in this paper.
\end{remark}

In the following of this paper, Assumption \ref{assumption:feasible_optimal} is considered to hold throughout unless otherwise specified.

Given a feasible trajectory $\left(\vx\left(t\right),u\left(t\right),\lambda_0,\vlambda\left(t\right),\veta\left(t\right)\right)$, one can determine the optimality through PMP-based necessary conditions like \eqref{eq:dotlambda}--\eqref{eq:hamilton_equiv_0}. However, in practice, only $\left(\vx\left(t\right),u\left(t\right)\right)$ is observable, while $\left(\lambda_0,\vlambda\left(t\right),\veta\left(t\right)\right)$ is difficult to fully solve based on provided $\left(\vx\left(t\right),u\left(t\right)\right)$. Consequently, the focus of this paper is to determine the optimality of a feasible trajectory in problem \eqref{eq:optimalproblem} using the provided $\left(\vx\left(t\right),u\left(t\right)\right)$.

\section{Arc Representation of the Optimal Trajectory}\label{sec:ArcAnalysis}

An arc of a trajectory is defined as a continuous segment conditioned by a fixed dynamic equation, i.e., $\dot\vx=\widehat{\vA}\vx+\widehat{\vb}$ where $\widehat{\vA}$ and $\widehat{\vb}$ are constant. Two kinds of arcs, i.e., unconstrained arcs and constrained arcs, are discussed in Section \ref{subsec:UnconstrainedArc}. Through discussion on the two arcs, the system behavior representing a single arc is defined in Section \ref{subsec:SystemBehavior}. Finally, an optimal non-chattering trajectory can be represented by a switching law, i.e., a sequence of system behaviors, as discussed in Section \ref{subsec:SwitchingLaw}.

\subsection{Unconstrained and Constrained Arcs in Problem \eqref{eq:optimalproblem}}\label{subsec:UnconstrainedArc}
    In an unconstrained arc, $\vC\vx+\vd<\vzero$ holds almost everywhere (a.e.), and the control does not switch. Proposition \ref{prop:unconstrainedarc_nosingular} states that $u\equiv\pm\um$ holds in an unconstrained arc.

    \begin{proposition}\label{prop:unconstrainedarc_nosingular}
        In an unconstrained arc, it holds a.e. that $\vb^\top\vlambda\not=0$ and $u\equiv-\um\sgn\left(\vb^\top\vlambda\right)$.
    \end{proposition}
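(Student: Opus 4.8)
The plan is to show that the only way to violate the conclusion is to have a genuine singular arc embedded in the unconstrained arc, and then to rule this out using controllability together with the nontriviality of the costate. First I would exploit the defining property of an unconstrained arc, namely $\vC\vx+\vd<\vzero$ a.e. Fixing any constraint index $p$, strict feasibility $\vc_p^\top\vx+d_p<0$ combined with the complementary slackness condition \eqref{eq:eta_constraint_zero} forces $\eta_p=0$ a.e., so $\veta\equiv\vzero$ on the arc. Substituting into the adjoint equation \eqref{eq:dotlambda} collapses it to the homogeneous constant-coefficient system $\dot\vlambda=-\vA^\top\vlambda$, whose solution $\vlambda\left(t\right)=\e^{-\vA^\top\left(t-t_0\right)}\vlambda\left(t_0\right)$ is real-analytic in $t$. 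Consequently the scalar function $\vb^\top\vlambda$ is analytic on the arc.

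The core step is a proof by contradiction. Suppose $\vb^\top\vlambda=0$ on a subset of positive measure; since an analytic function vanishing on such a set vanishes identically, we would have $\vb^\top\vlambda\equiv0$ throughout the arc, i.e. a singular arc in the sense of \eqref{eq:bang_singular_bang_law}. Differentiating this identity repeatedly and each time substituting $\dot\vlambda=-\vA^\top\vlambda$ yields $\frac{\mathrm{d}}{\mathrm{d}t}\left[\left(\vA^k\vb\right)^\top\vlambda\right]=-\left(\vA^{k+1}\vb\right)^\top\vlambda$, so that $\left(\vA^k\vb\right)^\top\vlambda\equiv0$ for every $k=0,1,\dots,n-1$. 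Stacking these identities gives $\left[\vb,\vA\vb,\dots,\vA^{n-1}\vb\right]^\top\vlambda\equiv\vzero$; by the controllability rank condition \eqref{eq:controllable} the matrix on the left has rank $n$, forcing $\vlambda\equiv\vzero$ on the arc. With both $\vlambda\equiv\vzero$ and $\veta\equiv\vzero$, the Hamiltonian \eqref{eq:hamilton} reduces to $\hamilton=\lambda_0$, and the vanishing condition \eqref{eq:hamilton_equiv_0} then forces $\lambda_0=0$, so that $\left(\lambda_0,\vlambda\right)\equiv\vzero$. This contradicts the nontriviality of $\left(\lambda_0,\vlambda\left(t\right)\right)$, completing the contradiction.

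It follows that $\vb^\top\vlambda\not\equiv0$; being analytic and not identically zero, its zeros are isolated, hence $\vb^\top\vlambda\neq0$ a.e. on the arc. The control law \eqref{eq:bang_singular_bang_law} then gives $u=\um$ wherever $\vb^\top\vlambda<0$ and $u=-\um$ wherever $\vb^\top\vlambda>0$, which is precisely $u\equiv-\um\sgn\left(\vb^\top\vlambda\right)$ away from the measure-zero zero set. I expect the main obstacle to be the rigorous justification of the passage from ``$\vb^\top\vlambda=0$ on a positive-measure set'' to ``$\vb^\top\vlambda\equiv0$ on the whole arc'': this is exactly where analyticity of the costate is essential, and it in turn relies on having first established $\veta\equiv\vzero$ so that the adjoint dynamics are genuinely homogeneous with constant coefficients. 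The remaining algebra, namely the iterated differentiation and the rank argument, is routine once controllability is invoked.
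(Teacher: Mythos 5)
Your proof follows essentially the same route as the paper's: complementary slackness under strict feasibility gives $\veta\equiv\vzero$, the adjoint equation collapses to $\dot\vlambda=-\vA^\top\vlambda$, repeated differentiation of $\vb^\top\vlambda\equiv0$ combined with the controllability rank condition \eqref{eq:controllable} forces $\vlambda\equiv\vzero$, and then \eqref{eq:hamilton_equiv_0} yields $\lambda_0=0$, contradicting nontriviality of $\left(\lambda_0,\vlambda\right)$. Your explicit use of analyticity to pass from ``zero on a positive-measure set'' to ``identically zero'' and then to ``zeros are isolated, hence nonzero a.e.'' is a welcome refinement of a step the paper leaves implicit (it is only addressed informally in the remark following the proposition), but it does not change the substance of the argument.
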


    \begin{remark}
        In an unconstrained arc, $\dot\vlambda=-\vA^\top\vlambda$ holds since $\veta\equiv\vzero$. Two cases could occur: (a) $\vb^\top\vlambda\equiv0$ holds for a period; (b) $\vb^\top\vlambda$ crosses 0 for finite times. Case (a) is ruled out in an unconstrained arc by Proposition \ref{prop:unconstrainedarc_nosingular}. In Case (b), the control switches between $u\equiv\um$ and $u\equiv-\um$ when $\vb^\top\vlambda$ crosses 0, where two unconstrained arcs are connected.
    \end{remark}

    Compared to unconstrained arcs, constrained arcs exhibit more complex behavior. In a constrained arc, $\exists p\in\left[P\right]$, s.t. $\vc_p^\top\vx+d_p\equiv0$. Proposition \ref{prop:constrainedarc_control} characterizes a constrained arc. Proposition \ref{prop:consistent_constraint} points out that multiple inequality constraints are allowed to be active in a constrained arc.

    \begin{proposition}\label{prop:constrainedarc_control}
        If $\vc_p^\top\vx+d_p\equiv0$ holds in a constrained arc for $t\in\left[t_1,t_2\right]$, then $\exists r_p\in\left[n\right]$, s.t. $\forall r\in\left[r_p-1\right]$, $\vc_p^\top\vA^{r-1}\vb=0$, and $\vc_p^\top\vA^{r_p-1}\vb\not=0$. During the constrained arc, it holds that
        \begin{equation}\label{eq:constrainedarc_control}
            u=\widehat{\va}_p^\top\vx,\,\dot\vx=\left(\vA+\vb\widehat{\va}_p^\top\right)\vx,\,\widehat{\va}_p\triangleq-\frac{\left(\vc_p^\top\vA^{r_p}\right)^\top}{\vc_p^\top\vA^{r_p-1}\vb},
        \end{equation}
        and $\forall t\in\left[t_1,t_2\right]$,
        \begin{equation}\label{eq:constrainedarc_highorder_constraint}
            \begin{dcases}
                \vc_p^\top\vx+d_p=0,\\
                \forall r\in\left[r_p-1\right],\,\vc_p^\top\vA^{r}\vx=0.
            \end{dcases}
        \end{equation}
        $r_p$ is called the \textit{order} of the active constraint $\vc_p^\top\vx+d_p\equiv0$.

        Conversely, if $\exists t_0\in\left[t_1,t_2\right]$, s.t. $\vx\left(t_{0}\right)$ satisfies \eqref{eq:constrainedarc_highorder_constraint}, then under the driving of \eqref{eq:constrainedarc_control}, $\vc_p^\top\vx+d_p\equiv0$ holds in $\left[t_1,t_2\right]$.
    \end{proposition}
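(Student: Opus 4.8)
The plan is to treat $y(t):=\vc_p^\top\vx(t)+d_p$ as an output that vanishes identically on $[t_1,t_2]$ and to differentiate it along the dynamics \eqref{eq:optimalproblem_dynamics}, tracking the order at which the control $u$ first appears. The first derivative is $\dot y=\vc_p^\top\vA\vx+(\vc_p^\top\vb)u$; whenever the current $u$-coefficient $\vc_p^\top\vA^{k-1}\vb$ vanishes, the $k$-th derivative collapses to the purely state-dependent expression $y^{(k)}=\vc_p^\top\vA^k\vx$, and since $y\equiv0$ forces $y^{(k)}\equiv0$, we obtain $\vc_p^\top\vA^k\vx\equiv0$. Each such relation is linear in the (differentiable) state, so the differentiation can be iterated without assuming any smoothness of $u$. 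First I would define $r_p$ as the least positive integer with $\vc_p^\top\vA^{r_p-1}\vb\neq0$; were no such integer in $[n]$ to exist, then $\vc_p^\top\vA^j\vb=0$ for all $j\in\{0,\dots,n-1\}$, i.e.\ $\vc_p^\top\left[\vb,\vA\vb,\dots,\vA^{n-1}\vb\right]=\vzero$, which by the controllability condition \eqref{eq:controllable} forces $\vc_p=\vzero$, contradicting the standing hypothesis $\vc_p\neq\vzero$; hence $r_p\in[n]$ exists.

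Carrying the differentiation through orders $1,\dots,r_p-1$ then produces exactly the reduced constraint system \eqref{eq:constrainedarc_highorder_constraint}. At order $r_p$ the control enters with nonzero coefficient, $y^{(r_p)}=\vc_p^\top\vA^{r_p}\vx+(\vc_p^\top\vA^{r_p-1}\vb)u\equiv0$; solving for $u$ yields the feedback $u=\widehat{\va}_p^\top\vx$ with $\widehat{\va}_p$ as in \eqref{eq:constrainedarc_control}, and substituting into \eqref{eq:optimalproblem_dynamics} gives the closed-loop dynamics $\dot\vx=(\vA+\vb\widehat{\va}_p^\top)\vx$, completing the forward direction.

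For the converse I would set $\vA_p\triangleq\vA+\vb\widehat{\va}_p^\top$ and prove by induction the identities $\vc_p^\top\vA_p^m=\vc_p^\top\vA^m$ for $0\le m\le r_p-1$ and $\vc_p^\top\vA_p^{r_p}=\vzero$. The former holds because $\vc_p^\top\vA^{m-1}\vb=0$ annihilates the feedback correction term at each step, while the latter is precisely the cancellation built into $\widehat{\va}_p$, since $(\vc_p^\top\vA^{r_p-1}\vb)\widehat{\va}_p^\top=-\vc_p^\top\vA^{r_p}$. Consequently, along the closed loop, $y^{(r_p)}=\vc_p^\top\vA_p^{r_p}\vx\equiv0$, so $y$ is a polynomial in $t$ of degree at most $r_p-1$; the hypothesis that $\vx(t_0)$ satisfies \eqref{eq:constrainedarc_highorder_constraint} makes $y$ together with its first $r_p-1$ derivatives vanish at $t_0$, and a polynomial of degree at most $r_p-1$ with all these derivatives zero at a single point is identically zero, giving $\vc_p^\top\vx+d_p\equiv0$ on $[t_1,t_2]$.

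The main obstacle I anticipate is the bookkeeping in the repeated differentiation—specifically, justifying that none of the first $r_p$ derivatives of $y$ requires differentiating the possibly nonsmooth control $u$, so that the argument needs only $\vx\in C^1$—together with keeping the two closed-loop identities $\vc_p^\top\vA_p^m=\vc_p^\top\vA^m$ and $\vc_p^\top\vA_p^{r_p}=\vzero$ correctly aligned in the converse.
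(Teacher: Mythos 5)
Your proposal is correct and follows essentially the same route as the paper: existence of $r_p$ from controllability and $\vc_p\neq\vzero$, repeated differentiation of $\vc_p^\top\vx+d_p$ until the control appears with nonzero coefficient to obtain \eqref{eq:constrainedarc_highorder_constraint} and the feedback \eqref{eq:constrainedarc_control}, and for the converse the vanishing of the $r_p$-th closed-loop derivative combined with the initial conditions at $t_0$. Your version merely makes explicit the closed-loop identities $\vc_p^\top(\vA+\vb\widehat{\va}_p^\top)^m=\vc_p^\top\vA^m$ for $m\le r_p-1$ and $\vc_p^\top(\vA+\vb\widehat{\va}_p^\top)^{r_p}=\vzero$ that the paper uses implicitly, which is a welcome clarification but not a different argument.
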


    \begin{lemma}\label{lemma:solution_linearODEs}
        $\vA\in\R^{n\times n}$ and $\vb\in\R^n$. The initial value problem where $\dot\vx=\vA\vx+\vb$, $\vx\left(t_0\right)=\vx_0$ has a unique solution:
        \begin{equation}\label{eq:solution_linearODEs}
            \forall t\in\R,\,\vx\left(t\right)=\e^{\vA\left(t-t_0\right)}\vx_0+\left(\int_{t_0}^{t}\e^{\vA\left(t-\tau\right)}\mathrm{d}\tau\right)\vb.
        \end{equation}
        Furthermore, if $\vc\in\R^n$ satisfies $\forall r\in\left[n\right]$, $\frac{\mathrm{d}^{r}\left(\vc^\top\vx\right)}{\mathrm{d}t^{r}}\left(t_0\right)=0$ holds, then $\vc^\top\vx\left(t\right)\equiv\const$ on $\R$.
    \end{lemma}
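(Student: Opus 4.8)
The plan is to treat the two assertions separately. For the existence part I would verify directly that the proposed $\vx(t)$ solves the initial value problem: setting $t=t_0$ collapses the integral to zero and returns $\vx(t_0)=\vx_0$, while differentiating and applying differentiation under the integral sign to $\int_{t_0}^t\e^{\vA(t-\tau)}\,\mathrm{d}\tau$ produces the boundary term $\e^{\vA\cdot 0}=\vI$ together with $\vA\int_{t_0}^t\e^{\vA(t-\tau)}\,\mathrm{d}\tau$; collecting terms yields exactly $\dot\vx=\vA\vx+\vb$. For uniqueness I would take two solutions $\vx_1,\vx_2$ and set $\vz=\vx_1-\vx_2$, which satisfies the homogeneous system $\dot\vz=\vA\vz$ with $\vz(t_0)=\vzero$; standard linear-ODE uniqueness (equivalently, noting that $\frac{\mathrm{d}}{\mathrm{d}t}(\e^{-\vA(t-t_0)}\vz)=\vzero$, or a Gr\"onwall estimate on $\norm{\vz}$) forces $\vz\equiv\vzero$.

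For the second part I would introduce the scalar function $\phi(t)=\vc^\top\vx(t)$. A short induction using $\dot\vx=\vA\vx+\vb$ shows that $\phi^{(r)}(t)=\vc^\top\vA^r\vx(t)+\vc^\top\vA^{r-1}\vb$ for every $r\geq 1$. The crux is to promote the finitely many vanishing conditions at $t_0$ into a statement valid for all $t$, and for this I would invoke the Cayley--Hamilton theorem. Writing the characteristic polynomial of $\vA$ as $p(\lambda)=\sum_{k=0}^{n}b_k\lambda^k$ with $b_n=1$, we have $p(\vA)=\vzero$, hence $\sum_{k=0}^{n}b_k\vA^{k+1}=\vA\,p(\vA)=\vzero$. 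Applying this to the auxiliary function $\psi\triangleq\phi'$, whose derivatives are $\psi^{(k)}=\vc^\top\vA^{k+1}\vx+\vc^\top\vA^{k}\vb$, the state-dependent terms cancel through $\sum_k b_k\vc^\top\vA^{k+1}=\vzero$ and the constant terms collapse into $\vc^\top p(\vA)\vb=0$, so that $\psi$ obeys the $n$th-order constant-coefficient homogeneous linear ODE $\sum_{k=0}^{n}b_k\psi^{(k)}\equiv 0$.

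Finally, the hypothesis $\phi^{(r)}(t_0)=0$ for $r\in\left[n\right]$ is precisely the zero Cauchy data $\psi(t_0)=\psi'(t_0)=\cdots=\psi^{(n-1)}(t_0)=0$ for this equation, so the uniqueness invoked in the first part (now for a scalar $n$th-order ODE) gives $\psi\equiv 0$ on $\R$; therefore $\phi'(t)\equiv 0$ and $\vc^\top\vx(t)\equiv\const$. I expect the main obstacle to be this central step of the second part, namely recognizing that Cayley--Hamilton converts the vanishing-derivative data into a genuine linear ODE for $\phi'$, together with the index bookkeeping: the hypothesis controls derivatives $1$ through $n$ of $\phi$, which supply exactly the $n$ initial values $\psi,\dots,\psi^{(n-1)}$ needed to pin down the order-$n$ equation. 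By comparison, the verification and uniqueness in the first part are routine.
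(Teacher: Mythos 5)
Your proof is correct, but it reaches the second assertion by a genuinely different route than the paper. Both arguments share the same algebraic engine — the paper invokes the minimal polynomial to write $\vA^{r}=p_{r}^{*}(\vA)$ with $\deg p_{r}^{*}<n$ for $r\geq n$, while you invoke Cayley--Hamilton, which is the same idea — and both start from the identity $\frac{\mathrm{d}^{r}(\vc^{\top}\vx)}{\mathrm{d}t^{r}}=\vc^{\top}\vA^{r-1}(\vA\vx+\vb)$. Where you diverge is in how the finitely many vanishing conditions at $t_{0}$ are propagated to all of $\R$. The paper uses the polynomial identity to conclude that \emph{every} derivative of order $\geq 1$ of $\vc^{\top}\vx$ vanishes at $t_{0}$, then asserts that $\vc^{\top}\vx$ is analytic and sums the Taylor series to get $\vc^{\top}\vx(t)\equiv\vc^{\top}\vx(t_{0})$. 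You instead show that $\psi=\frac{\mathrm{d}}{\mathrm{d}t}(\vc^{\top}\vx)$ satisfies the scalar constant-coefficient ODE $\sum_{k=0}^{n}b_{k}\psi^{(k)}\equiv 0$ (leading coefficient $b_{n}=1$), observe that the hypothesis supplies exactly the zero Cauchy data $\psi(t_{0})=\cdots=\psi^{(n-1)}(t_{0})=0$, and conclude $\psi\equiv 0$ by ODE uniqueness. Your index bookkeeping is right, and the cancellation $\sum_{k}b_{k}\vc^{\top}\vA^{k+1}=\vc^{\top}\vA\,p(\vA)=\vzero$ together with $\vc^{\top}p(\vA)\vb=0$ does yield the claimed ODE. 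What your route buys is that it leans only on the standard uniqueness theorem for linear ODEs (which you already need for the first assertion), and it sidesteps the paper's unproved side claim that $\vc^{\top}\vx$ is analytic; what the paper's route buys is a slightly shorter write-up, since once all derivatives at $t_{0}$ vanish the conclusion is immediate from the Taylor expansion.
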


    \begin{proposition}\label{prop:consistent_constraint}
        Consider a constrained arc for $t\in\left[t_1,t_2\right]$. $\mathcal{P}\subset\left[P\right]$. Assume that during the arc, $\forall p\in\mathcal{P}$, $\vc_p^\top\vx+d_p\equiv0$ holds. Then, $\forall p\in\mathcal{P}$, $t\in\left[t_1,t_2\right]$, it holds that
        \begin{equation}\label{eq:constrainedarc_highorder_constraint_multiple}
            \begin{dcases}
                \vc_p^\top\vx+d_p=0,\\
                \forall q\in\mathcal{P},\,r\in\left[n\right],\,\vc_q^\top\left(\vA+\vb\widehat{\va}_p^\top\right)^r\vx=0.
            \end{dcases}
        \end{equation}
        Active constraints $\vc_p^\top\vx+d_p\equiv0$ for $p\in\mathcal{P}$ are called \textit{consistent} if linear equations \eqref{eq:constrainedarc_highorder_constraint_multiple} have a feasible solution $\vx$.

        Conversely, assume that $\exists t_0\in\left[t_1,t_2\right]$, s.t. $\vx\left(t_0\right)$ satisfies \eqref{eq:constrainedarc_highorder_constraint_multiple}. Then, under the driving of \eqref{eq:constrainedarc_control} with an arbitrary $p\in\mathcal{P}$, it holds that $\forall q\in\mathcal{P}$, $\vc_q^\top\vx+d_q\equiv0$ in $\left[t_1,t_2\right]$.
    \end{proposition}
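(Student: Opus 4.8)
The plan is to handle the two directions separately, each reducing to a short derivative computation along the closed-loop dynamics supplied by Proposition \ref{prop:constrainedarc_control}; throughout I abbreviate $\vM_p\triangleq\vA+\vb\widehat{\va}_p^\top$. For the forward direction, fix an arbitrary $p\in\mathcal{P}$. Since $\vc_p^\top\vx+d_p\equiv0$ on $[t_1,t_2]$, Proposition \ref{prop:constrainedarc_control} gives $u=\widehat{\va}_p^\top\vx$ and hence $\dot\vx=\vM_p\vx$ identically on the arc. As this linear identity holds on a whole subinterval, I may differentiate it repeatedly to get $\frac{\mathrm{d}^r}{\mathrm{d}t^r}(\vc_q^\top\vx)=\vc_q^\top\vM_p^r\vx$ for each $q\in\mathcal{P}$ and every $r\geq1$. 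But the hypothesis $\vc_q^\top\vx+d_q\equiv0$ says $\vc_q^\top\vx$ is constant on the arc, so all its positive-order derivatives vanish; hence $\vc_q^\top\vM_p^r\vx=0$ for all $r\in[n]$. Together with $\vc_p^\top\vx+d_p=0$, this is precisely \eqref{eq:constrainedarc_highorder_constraint_multiple}, and since $p$ was arbitrary the claim holds for every $p\in\mathcal{P}$. The notion of consistency is only a definition and requires no argument.

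For the converse, fix the single index $p\in\mathcal{P}$ prescribed to drive the system, so that $\dot\vx=\vM_p\vx$ on $[t_1,t_2]$, and suppose $\vx(t_0)$ satisfies \eqref{eq:constrainedarc_highorder_constraint_multiple}. The idea is to apply Lemma \ref{lemma:solution_linearODEs} with system matrix $\vM_p$ and zero forcing term, i.e.\ its homogeneous special case $\vb=\vzero$. Fix any $q\in\mathcal{P}$; differentiating along $\dot\vx=\vM_p\vx$ gives $\frac{\mathrm{d}^r}{\mathrm{d}t^r}(\vc_q^\top\vx)(t_0)=\vc_q^\top\vM_p^r\vx(t_0)$, which vanishes for $r\in[n]$ by hypothesis. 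Lemma \ref{lemma:solution_linearODEs} then forces $\vc_q^\top\vx\equiv\const$; evaluating the constant at $t_0$, where $\vc_q^\top\vx(t_0)+d_q=0$ (reading the index in \eqref{eq:constrainedarc_highorder_constraint_multiple} as $q$), yields $\vc_q^\top\vx+d_q\equiv0$ on the whole arc, as required.

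The computations are short, so the main thing to get right is the bookkeeping of which closed-loop matrix governs the arc. In the converse, the arc is driven by a single prescribed $p$, yet the conclusion must cover every active constraint $q\in\mathcal{P}$; the point is that the derivatives in \eqref{eq:constrainedarc_highorder_constraint_multiple} are already taken with respect to that same $\vM_p$, so the hypothesis is exactly what Lemma \ref{lemma:solution_linearODEs} consumes. The one step I would state explicitly to avoid ambiguity is the identification of the homogeneous flow $\dot\vx=\vM_p\vx$ with the affine template $\dot\vx=\vA\vx+\vb$ of Lemma \ref{lemma:solution_linearODEs} through the substitution $\vb=\vzero$.
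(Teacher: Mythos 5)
Your proof is correct and follows essentially the same route as the paper: the forward direction differentiates the constant quantity $\vc_q^\top\vx$ along the closed-loop dynamics $\dot\vx=(\vA+\vb\widehat{\va}_p^\top)\vx$ supplied by Proposition \ref{prop:constrainedarc_control}, and the converse applies Lemma \ref{lemma:solution_linearODEs} to the vanishing derivatives at $t_0$. Your explicit remarks on the bookkeeping (which closed-loop matrix governs the arc, and the substitution $\vb=\vzero$ in the lemma) are sound clarifications of steps the paper leaves implicit.
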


    \begin{example}
        Consider a chain-of-integrator system of order $n$:
        \begin{equation}\label{eq:dynamics_chainofintegrators}
            \begin{dcases}
                \dot x_{k}\left(t\right)=x_{k-1}\left(t\right),&t\in\left[0,\tf\right],\, 1<k\leq n,\\
                \dot x_1\left(t\right)=u\left(t\right),&t\in\left[0,\tf\right].
            \end{dcases}
        \end{equation}
        Two inequality constraints are introduced as $x_1+x_3\leq1$ and $x_2\leq0$. $x_1+x_3\equiv1$ and $x_2\equiv0$ are consistent, i.e., a constrained arc is allowed where both $x_1+x_3\equiv0$ and $x_2\equiv0$ hold. In this case, $x_2\equiv0$ implies $x_1\equiv0$ and $u\equiv0$; hence, $x_1+x_3\leq1$ implies $x_3\equiv1$. Similarly, $x_1+x_3\equiv1$ and $x_2\equiv1$ are inconsistent, i.e., in a constrained arc, if $x_1+x_3\equiv1$ holds, then $x_2\equiv1$ cannot hold.
    \end{example}

\subsection{System Behavior}\label{subsec:SystemBehavior}
    \begin{definition}\label{def:systembehavior}
        A \textit{system behavior} in problem \eqref{eq:optimalproblem} is an arc with constant system dynamics, denoted as $\systembehavior=\left(\widehat{\vA},\widehat{\vb},\vF,\vg,\mathcal{P}\right)$. During the arc, $\dot\vx=\widehat{\vA}\vx+\widehat{\vb}$ holds. Induced by Proposition \ref{prop:consistent_constraint}, $\vF\vx+\vg\equiv\vzero$ provides full equality constraints, where $\vF$ has full row rank. $\mathcal{P}$ is the set of consistent active state constraints.
    \end{definition}

    In Definition \ref{def:systembehavior}, an unconstrained arc can be denoted as $\systembehavior=\left(\vA,u_0\vb,\sim,\sim,\varnothing\right)$, where $\sim$ means that no equality constraints are introduced. $u_0\in\left\{\pm\um\right\}$ is a constant. By Lemma \ref{lemma:solution_linearODEs},
    \begin{equation}
        \vx\left(t\right)=\e^{\vA\left(t-t_0\right)}\vx\left(t_0\right)+u_0\left(\int_{t_0}^{t}\e^{\vA\left(t-\tau\right)}\mathrm{d}\tau\right)\vb.
    \end{equation}

    For a constrained arc $\systembehavior=\left(\widehat{\vA},\vzero,\vF,\vg,\mathcal{P}\right)$, $\mathcal{P}\not=\varnothing$. \eqref{eq:constrainedarc_control} implies that $\forall p\in\mathcal{P}$, $\dot\vx=\left(\vA+\vb\widehat{\va}_p^\top\right)\vx$, where $\widehat{\va}_p$ is defined in \eqref{eq:constrainedarc_control}. Although it allows that $\exists p,q\in\mathcal{P}$, s.t. $\widehat{\va}_{p}\not=\widehat{\va}_{q}$, it is guaranteed by the consistency of $\mathcal{P}$ that $\widehat{\va}_p^\top\vx\equiv\widehat{\va}_q^\top\vx$ during the arc. Denote by $\widehat{\vA}\triangleq\widehat{\vA}_{\min\left(\mathcal{P}\right)}$. Then, $\systembehavior$ in Definition \ref{def:systembehavior} is well-defined. By Lemma \ref{lemma:solution_linearODEs}, it holds during $\systembehavior$ that
    \begin{equation}
        \vx\left(t\right)=\e^{\widehat{\vA}\left(t-t_0\right)}\vx\left(t_0\right).
    \end{equation}

    \begin{example}\label{example:chainofintegrators_boxconstraint}
        Consider a chain-of-integrator system \eqref{eq:dynamics_chainofintegrators} with full box state constraints, i.e., $\forall k\in\left[n\right]$, $t\in\left[0,\tf\right]$, $-x_{\mathrm{m}k}\leq x_k\left(t\right)\leq x_{\mathrm{m}k}$. The $\left(2k-1\right)$-th constraint is $-x_k\left(t\right)\leq x_{\mathrm{m}k}$, and the $\left(2k\right)$-th constraint is $x_k\left(t\right)\leq x_{\mathrm{m}k}$. A constrained arc $x_k\equiv x_{\mathrm{m}k}$ is of $k$th-order, where $\forall j\in\left[k-1\right]$, $x_j\equiv0$, and $u\equiv0$. Then, $x_k\equiv x_{\mathrm{m}k}$ can be represented as $\systembehavior=\left(\widehat{\vA},\vzero,\vI_{k\times n},x_{\mathrm{m}k}\ve_k,\left\{2k\right\}\right)$ where $\widehat{\vA}=\left(\hat{a}_{ij}\right)_{i\in\left[n\right],j\in\left[n\right]}$. $\hat{a}_{ij}=1$ if $k<i=j+1\leq n$; otherwise, $\hat{a}_{ij}=0$.
    \end{example}

\begin{figure}[!t]
    \centering
    \includegraphics[width=\columnwidth]{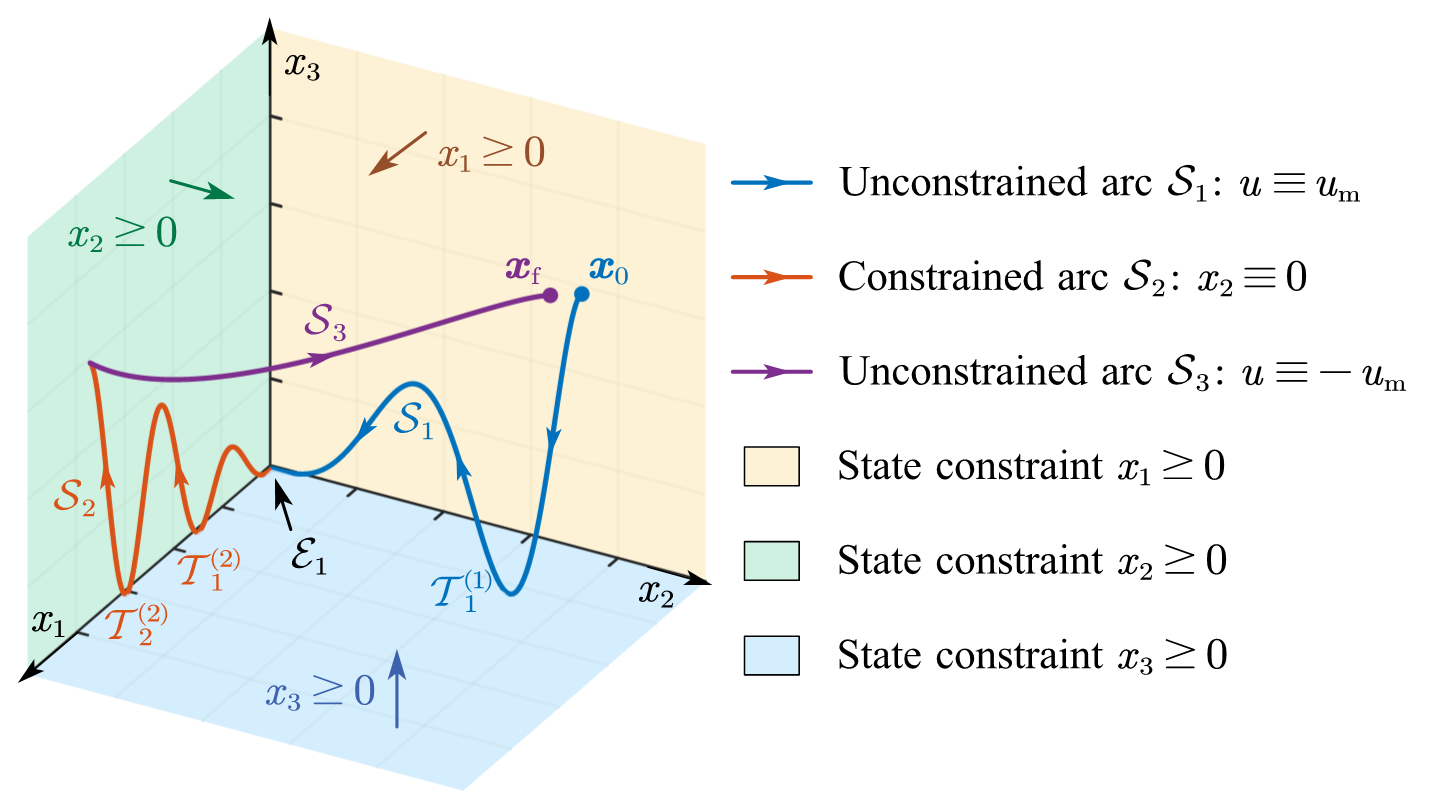}
    \caption{A feasible BBS trajectory under the constraint  $\vx\geq\vzero$. The trajectory consists of 3 arcs, i.e., $\systembehavior_1$, $\systembehavior_2$, and $\systembehavior_3$. $\systembehavior_1$ and $\systembehavior_2$ is connected at $\vzero$, denoted as $\mathcal{E}_1$. The trajectory is tangent to $\left\{x_3=0\right\}$ at $\mathcal{T}_1^{(1)}$, $\mathcal{T}_1^{(2)}$, and $\mathcal{T}_2^{(2)}$.}
    \label{fig:ASL_demo}
\end{figure}

\subsection{Switching Law}\label{subsec:SwitchingLaw}
    Based on the analysis in Section \ref{subsec:SystemBehavior}, if the optimal trajectory consists of a finite number of arcs, then the optimal trajectory can be represented by a sequence of system behaviors. The switching law is defined as follows:

    \begin{definition}\label{def:switchinglaw}
        Assume that a BBS trajectory in problem \eqref{eq:optimalproblem} consists of a finite number of arcs $\systembehavior_1,\systembehavior_2,\dots,\systembehavior_N$ successively. The \textit{switching law} is denoted as $\switchinglaw=\systembehavior_1\systembehavior_2\dots \systembehavior_N$.
    \end{definition}

    The switching law focuses on the arcs that compose the optimal trajectory, while the motion time of each arc is not included in a switching law. For the example in Fig. \ref{fig:ASL_demo}, the switching law is $\systembehavior_1\systembehavior_2\systembehavior_3$. $\systembehavior_1$ and $\systembehavior_3$ are unconstrained arcs where $\vx>\vzero$ holds a.e. $\systembehavior_2$ is a constrained arc where $x_2\equiv0$.

    The following proposition discusses connections of two adjacent arcs, including unconstrained and constrained arcs.

    \begin{proposition}\label{prop:connection}
        Assume that $\systembehavior_i=\left(\widehat{\vA}_i,\widehat{\vb}_i,\vF_i,\vg_i,\mathcal{P}_i\right)$ occurs in $\left[t_{i-1},t_i\right]$, $i=1,2$, where $\left[\widehat{\vA}_1,\widehat{\vb}_1\right]\not=\left[\widehat{\vA}_2,\widehat{\vb}_2\right]$. The following conclusions hold at the connection of $\systembehavior_1$ and $\systembehavior_2$.
        \begin{enumerate}
            \item\label{subprop:connection_unconstrained} If $\systembehavior_1$ and $\systembehavior_2$ are unconstrained arcs, then $\vb^\top\vlambda$ crosses 0 at $t_1$. $\widehat{\vb}_1=u_1\vb$ and $\widehat{\vb}_2=-u_1\vb$, where $u_1\in\left\{\pm\um\right\}$.
            \item\label{subprop:connection_constrained} If $\systembehavior_1$ and $\systembehavior_2$ are constrained arcs, then $\mathcal{P}_1\cap\mathcal{P}_2=\varnothing$. Furthermore, $\forall p\in\mathcal{P}_1$, $\exists \hat{r}_p\in\left[r_p,n\right]\cap\N$, s.t.
            \begin{equation}\label{eq:connection_constrained_s1tos2}
                \begin{dcases}
                    \vc_p^\top\widehat{\vA}_2^{\hat{r}_p}\vx\left(t_1\right)<0,\\
                    \forall r\in\left[\hat{r}_p-1\right],\,\vc_p^\top\widehat{\vA}_2^r\vx\left(t_1\right)=0.
                \end{dcases}
            \end{equation}
            $\forall p\in\mathcal{P}_2$, $\exists \hat{r}_p\in\left[r_p,n\right]\cap\N$, s.t.
            \begin{equation}\label{eq:connection_constrained_s2tos1}
                \begin{dcases}
                    \left(-1\right)^{\hat{r}_p}\vc_p^\top\widehat{\vA}_1^{\hat{r}_p}\vx\left(t_1\right)<0,\\
                    \forall r\in\left[\hat{r}_p-1\right],\,\vc_p^\top\widehat{\vA}_1^r\vx\left(t_1\right)=0.
                \end{dcases}
            \end{equation}
            Conversely, assume that $\dot\vx=\widehat{\vA}_1\vx$ for $\left(t_0,t_1\right)$ and $\dot\vx=\widehat{\vA}_2\vx$ for $\left(t_1,t_2\right)$. If $\vx\left(t_1\right)$ satisfies \eqref{eq:connection_constrained_s1tos2} for $p\in\mathcal{P}_1$, \eqref{eq:connection_constrained_s2tos1} for $p\in\mathcal{P}_2$, and \eqref{eq:constrainedarc_highorder_constraint} for $p\in\mathcal{P}_1\cup\mathcal{P}_2$, then $\exists \varepsilon>0$, $\systembehavior_1$ occurs for $\left(t_1-\varepsilon,t_1\right)$ and $\systembehavior_2$ occurs for $\left(t_1,t_1+\varepsilon\right)$.
            \item\label{subprop:connection_constrained_unconstrained} If $\systembehavior_1$ is a constrained arc and $\systembehavior_2$ is an unconstrained arc, then $\forall p,q\in\mathcal{P}_1$, $\sgn\left(\vc_p^\top\vA^{r_p-1}\vb\right)=\sgn\left(\vc_q^\top\vA^{r_q-1}\vb\right)$. On $\systembehavior_2$, $\forall p\in\mathcal{P}_1$, it holds that
            \begin{equation}\label{eq:connection_unconstrained_constrained_control_s2unconstrained}
                u\equiv u_2=-\sgn\left(\vc_p^\top\vA^{r_p-1}\vb\right)\um.
            \end{equation}
            Furthermore, $\forall p\in\mathcal{P}_1$, $\exists \hat{r}_p\in\left[r_p,n\right]\cap\N$, s.t.
            \begin{equation}\label{eq:connection_unconstrained_constrained_s1tos2}
                \begin{dcases}
                    \vc_p^\top\vA^{\hat{r}_p-1}\left(\vA\vx\left(t_1\right)+u_2\vb\right)<0,\\
                    \forall r\in\left[\hat{r}_p-1\right],\,\vc_p^\top\vA^{r-1}\left(\vA\vx\left(t_1\right)+u_2\vb\right)=0.
                \end{dcases}
            \end{equation}
            Conversely, assume that $\dot\vx=\widehat{\vA}_1\vx$ for $\left(t_0,t_1\right)$ and $\dot\vx=\vA\vx+u_2\vb$ for $\left(t_1,t_2\right)$ where $u_2$ is given in \eqref{eq:connection_unconstrained_constrained_control_s2unconstrained}. If $\vx\left(t_1\right)$ satisfies \eqref{eq:connection_unconstrained_constrained_s1tos2} and \eqref{eq:constrainedarc_highorder_constraint} for $p\in\mathcal{P}_1$, then $\exists \varepsilon>0$, $\systembehavior_1$ occurs for $\left(t_1-\varepsilon,t_1\right)$ and $\systembehavior_2$ occurs for $\left(t_1,t_1+\varepsilon\right)$.
            \item\label{subprop:connection_unconstrained_constrained} If $\systembehavior_1$ is an unconstrained arc and $\systembehavior_2$ is a constrained arc, then $\forall p,q\in\mathcal{P}_2$, $\sgn\left(\vc_p^\top\vA^{r_p-1}\vb\right)=\sgn\left(\vc_q^\top\vA^{r_q-1}\vb\right)$. On $\systembehavior_1$, $\forall p\in\mathcal{P}_2$, it holds that
            \begin{equation}\label{eq:connection_constrained_unconstrained_control_s1unconstrained}
                u\equiv u_1=\left(-1\right)^{r_p-1}\sgn\left(\vc_p^\top\vA^{r_p-1}\vb\right)\um.
            \end{equation}
            Furthermore, $\forall p\in\mathcal{P}_2$, $\exists \hat{r}_p\in\left[r_p,n\right]\cap\N$, s.t.
            \begin{equation}\label{eq:connection_constrained_unconstrained_s1tos2}
                \begin{dcases}
                    \left(-1\right)^{\hat{r}_p}\vc_p^\top\vA^{\hat{r}_p-1}\left(\vA\vx\left(t_1\right)+u_1\vb\right)<0,\\
                    \forall r\in\left[\hat{r}_p-1\right],\,\vc_p^\top\vA^{r-1}\left(\vA\vx\left(t_1\right)+u_1\vb\right)=0.
                \end{dcases}
            \end{equation}
            Conversely, assume that $\dot\vx=\vA\vx+u_1\vb$ for $\left(t_0,t_1\right)$ and $\dot\vx=\widehat{\vA}_1\vx$ for $\left(t_1,t_2\right)$ where $u_1$ is given in \eqref{eq:connection_constrained_unconstrained_control_s1unconstrained}. If $\vx\left(t_1\right)$ satisfies \eqref{eq:connection_constrained_unconstrained_s1tos2} and \eqref{eq:constrainedarc_highorder_constraint} for $p\in\mathcal{P}_2$, then $\exists \varepsilon>0$, $\systembehavior_1$ occurs for $\left(t_1-\varepsilon,t_1\right)$ and $\systembehavior_2$ occurs for $\left(t_1,t_1+\varepsilon\right)$.
        \end{enumerate}
    \end{proposition}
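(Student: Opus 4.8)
The plan is to track, for each constraint index $p$, the scalar $\phi_p(t)\triangleq\vc_p^\top\vx(t)+d_p$, whose sign encodes feasibility: the trajectory is feasible iff $\phi_p\le0$ for all $p$, and $p$ is active exactly where $\phi_p\equiv0$. At the junction $t_1$, any constraint active on exactly one side satisfies $\phi_p(t_1)=0$ while remaining $\le0$ on the other side, so the whole proposition reduces to a one-sided Taylor analysis of $\phi_p$ together with the control law. Part \ref{subprop:connection_unconstrained} I would dispatch immediately: if both arcs are unconstrained then $\widehat{\vA}_1=\widehat{\vA}_2=\vA$, so $[\widehat{\vA}_1,\widehat{\vb}_1]\ne[\widehat{\vA}_2,\widehat{\vb}_2]$ forces $\widehat{\vb}_1\ne\widehat{\vb}_2$, i.e. the control changes value; by Proposition \ref{prop:unconstrainedarc_nosingular} and the law \eqref{eq:bang_singular_bang_law} this happens only when $\vb^\top\vlambda$ crosses $0$, and since $u\in\{\pm\um\}$ on both arcs the only option is $\widehat{\vb}_2=-\widehat{\vb}_1$.

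For the forward directions of parts \ref{subprop:connection_constrained}--\ref{subprop:connection_unconstrained_constrained} I would first prove a uniform vanishing lemma: if a constraint $p$ of order $r_p$ is active on the arc preceding or following $t_1$, then \eqref{eq:constrainedarc_highorder_constraint} holds at $t_1$ by continuity, and since the order condition gives $\vc_p^\top\vA^{j}\vb=0$ for $j\le r_p-2$, every one-sided derivative $\frac{\mathrm d^r}{\mathrm dt^r}\phi_p$ of order $r\le r_p-1$ under the new dynamics still vanishes at $t_1$ (expanding $\widehat{\vA}=\vA+\vb\widehat{\va}^\top$, each extra term carries an annihilated factor $\vc_p^\top\vA^{j}\vb$). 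Hence the first possibly nonzero one-sided derivative sits at some order $\hat r_p\ge r_p$, with $\hat r_p\le n$, since otherwise Lemma \ref{lemma:solution_linearODEs} would force $\phi_p\equiv\const$, contradicting inactivity on the adjacent arc. Feasibility $\phi_p\le0$ then pins the sign of this leading derivative: reading forward in time (leaving the boundary onto $\systembehavior_2$) gives the plain inequality \eqref{eq:connection_constrained_s1tos2}/\eqref{eq:connection_unconstrained_constrained_s1tos2}, while reading backward (approaching from $\systembehavior_1$) introduces the factor $(-1)^{\hat r_p}$ of \eqref{eq:connection_constrained_s2tos1}/\eqref{eq:connection_constrained_unconstrained_s1tos2}, since $(t-t_1)^{\hat r_p}$ changes sign. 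In the mixed cases \ref{subprop:connection_constrained_unconstrained}, \ref{subprop:connection_unconstrained_constrained} I would compute the order-$r_p$ coefficient explicitly: substituting the constrained-arc control $u=\widehat{\va}_p^\top\vx$ from \eqref{eq:constrainedarc_control} turns it into $(u_{\text{new}}-u_{\text{old}})\,\vc_p^\top\vA^{r_p-1}\vb$ (up to the $(-1)^{r_p}$ from time direction); requiring this $\le0$ for the bounded constrained-arc control $u_{\text{old}}\in[-\um,\um]$ forces the unique admissible bang value \eqref{eq:connection_unconstrained_constrained_control_s2unconstrained} resp. \eqref{eq:connection_constrained_unconstrained_control_s1unconstrained}, and because one common $u$ must serve every active $p$, the sign-consistency $\sgn(\vc_p^\top\vA^{r_p-1}\vb)=\sgn(\vc_q^\top\vA^{r_q-1}\vb)$ drops out automatically. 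For disjointness in part \ref{subprop:connection_constrained}, a shared active $p$ would keep $\phi_p\equiv0$ across $t_1$, and by Proposition \ref{prop:constrainedarc_control} the realized control $\widehat{\va}_p^\top\vx$, hence the realized dynamics $(\vA+\vb\widehat{\va}_p^\top)\vx$, would coincide on both sides, contradicting that $\systembehavior_1$ and $\systembehavior_2$ are distinct arcs.

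For the converse directions I would run the implications backward. Assuming $\vx(t_1)$ satisfies \eqref{eq:constrainedarc_highorder_constraint} for the relevant active set, the converse halves of Propositions \ref{prop:constrainedarc_control} and \ref{prop:consistent_constraint} guarantee that integrating the constrained dynamics keeps exactly those constraints active on the corresponding one-sided neighborhood, making that side a genuine constrained arc. On the leaving side, the assumed strict sign of the first nonzero derivative in \eqref{eq:connection_constrained_s1tos2}, \eqref{eq:connection_constrained_s2tos1}, \eqref{eq:connection_unconstrained_constrained_s1tos2}, or \eqref{eq:connection_constrained_unconstrained_s1tos2}, through the same Taylor expansion, drives $\phi_p$ strictly negative on a punctured neighborhood; shrinking $\varepsilon$ so that no other constraint activates, that side becomes a genuine (unconstrained or differently constrained) arc, which supplies the required $\varepsilon>0$.

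The step I expect to be most delicate is the vanishing lemma combined with the sign bookkeeping: one must verify carefully that passing from the open-loop $\vA$ to the closed-loop $\widehat{\vA}$ leaves every derivative below order $r_p$ untouched (all extra terms must be shown to carry an annihilated factor $\vc_p^\top\vA^{j}\vb$), and keep straight the $(-1)^{\hat r_p}$ sign separating entry into a constraint from exit. The disjointness argument in part \ref{subprop:connection_constrained} is also subtle, since $\widehat{\vA}_i=\widehat{\vA}_{\min(\mathcal{P}_i)}$ is only a representative matrix, so the contradiction must be drawn at the level of the realized dynamics rather than the stored $\widehat{\vA}_i$.
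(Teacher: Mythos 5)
Your proposal is correct and follows essentially the same route as the paper: part 1 via Proposition \ref{prop:unconstrainedarc_nosingular}, the forward directions of parts 2--4 by analyzing the first nonvanishing one-sided derivative of $\vc_p^\top\vx+d_p$ (bounded by order $n$ via Lemma \ref{lemma:solution_linearODEs}, with the sign fixed by feasibility and the $(-1)^{\hat r_p}$ from the time direction), the bang value forced by the jump term $\vc_p^\top\vA^{r_p-1}\vb\,(u_2-u(t_1^-))\le 0$, disjointness by propagating shared active constraints across $t_1$, and the converses by Taylor expansion plus the converse halves of Propositions \ref{prop:constrainedarc_control} and \ref{prop:consistent_constraint}. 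Your explicit ``vanishing lemma'' (that replacing $\vA$ by $\widehat{\vA}=\vA+\vb\widehat{\va}^\top$ leaves derivatives below order $r_p$ unchanged because the extra terms carry annihilated factors $\vc_p^\top\vA^{j}\vb$) is a useful spelling-out of a step the paper leaves implicit, not a different argument.
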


    \begin{remark}
        Proposition \ref{prop:connection} provides a necessary and sufficient condition for the feasibility of adjacent arcs near the connection. Note that the equality constraints at the connection time, i.e., \eqref{eq:constrainedarc_highorder_constraint}, \eqref{eq:connection_constrained_s2tos1}, \eqref{eq:connection_unconstrained_constrained_s1tos2}, and \eqref{eq:connection_constrained_unconstrained_s1tos2}, are allowed to be linearly dependent. When examining feasibility near the connection time, it is sufficient to consider inequality constraints and all linearly independent equality constraints.
    \end{remark}

\section{Keypoints for Trajectory Feasibility}\label{sec:KeypointsFeasibilityOptimalTrajectory}
The switching law proposed in Section \ref{sec:ArcAnalysis} can describe a given optimal trajectory and the feasibility near connections of arcs, while the feasibility during an arc remains inadequately investigated. To address this limitation, this section investigates the feasibility of the optimal trajectory on each arc, especially of \textit{keypoints} in an arc. As formally defined in Definition \ref{def:augmentedswitchinglaw}, a keypoint, where the trajectory touches state constraints' boundaries in isolation, directly influences the feasibility of the perturbed trajectory. For the example shown in Fig. \ref{fig:ASL_demo}, the time points at $\mathcal{T}_1^{(1)}$, $\mathcal{T}_1^{(2)}$, $\mathcal{T}_2^{(2)}$, and ends of arcs are keypoints.

Section \ref{subsec:AdditionalConstraintsEndArc} analyzes additional constraints of an arc's end besides Proposition \ref{prop:connection}. The tangent marker is proposed in Section \ref{subsec:TangentMarkersArc} to describe the feasibility of a BBS trajectory tangent to constraints' boundaries. Finally, Section \ref{subsec:AugmentedSwitchingLaw} proposes the augmented switching law and provides a sufficient and necessary condition for the feasibility near the keypoints.

\subsection{Additional Constraints at the End of An Arc}\label{subsec:AdditionalConstraintsEndArc}
    Proposition \ref{prop:connection} fully discusses the feasibility of the connection of two adjacent arcs w.r.t. constraints induced by $\mathcal{P}_1$ and $\mathcal{P}_2$. However, the constraints on $u$ and $\vx$, except for those in $\mathcal{P}_1\cup\mathcal{P}_2$ still lacks discussion. The following proposition provides a sufficient and necessary condition for the feasibility at the end of an arc.

    \begin{proposition}\label{prop:constrainedarc_end_feasibility}
        Assume that $\systembehavior=\left(\widehat{\vA},\widehat{\vb},\vF,\vg,\mathcal{P}\right)$ is a system behavior for $t\in\left[t_0,t_1\right]$.
        \begin{enumerate}
            \item\label{prop:constrainedarc_end_feasibility_state} $\forall p\not\in\mathcal{P}$, $\exists\varepsilon>0$, s.t. $\vc_p^\top\vx\left(t\right)+d_p\leq0$ on $\left[t_0,t_0+\varepsilon\right]$ if and only if one of the following conditions holds:
            \begin{enumerate}
                \item $\vc_p^\top\vx\left(t_0\right)+d_p<0$.
                \item $\vc_p^\top\vx\left(t_0\right)+d_p=0$. Furthermore, $\exists\hat{r}_p\in\left[n\right]$, s.t. $\forall r\in\left[\hat{r}_p-1\right]$, $\vc_p^\top\widehat{\vA}^{r-1}\left(\widehat{\vA}\vx\left(t_0\right)+\widehat{\vb}\right)=0$, and $\vc_p^\top\widehat{\vA}^{\hat{r}_p-1}\left(\widehat{\vA}\vx\left(t_0\right)+\widehat{\vb}\right)<0$.
            \end{enumerate}
            Similar conclusions hold for $\left[t_1-\varepsilon,t_1\right]$.
            \item\label{prop:constrainedarc_end_feasibility_control} Assume that $\systembehavior$ is a constrained arc. Then, $\exists\varepsilon>0$, s.t. $u\left(t\right)\leq\um$ on $\left[t_0,t_0+\varepsilon\right]$ if and only if one of the following conditions holds, where $p\in\mathcal{P}$:
            \begin{enumerate}
                \item\label{prop:constrainedarc_end_feasibility_control_small} $-\frac{\vc_p^\top\vA^{r_p}}{\vc_p^\top\vA^{r_p-1}\vb}\vx\left(t_0\right)<\um$.
                \item\label{prop:constrainedarc_end_feasibility_control_small_highorder} $-\frac{\vc_p^\top\vA^{r_p}}{\vc_p^\top\vA^{r_p-1}\vb}\vx\left(t_0\right)=\um$. Furthermore, $\exists\hat{r}_p \in\left[n\right]$, s.t. $\forall r\in\left[\hat{r}_p-1\right]$, $-\frac{\vc_p^\top\vA^{r_p}\widehat{\vA}^r}{\vc_p^\top\vA^{r_p-1}\vb}\vx\left(t_0\right)=0$, and $-\frac{\vc_p^\top\vA^{r_p}\widehat{\vA}^{\hat{r}_p}}{\vc_p^\top\vA^{r_p-1}\vb}\vx\left(t_0\right)<0$.
                \item\label{prop:constrainedarc_end_feasibility_control_equal} $-\frac{\vc_p^\top\vA^{r_p}}{\vc_p^\top\vA^{r_p-1}\vb}\vx\left(t_0\right)=\um$. Furthermore, $\forall r\in\left[n\right]$, $-\frac{\vc_p^\top\vA^{r_p}\widehat{\vA}^r}{\vc_p^\top\vA^{r_p-1}\vb}\vx\left(t_0\right)=0$.
            \end{enumerate}
            Similar conclusions hold for $u\geq-\um$ and $\left[t_1-\varepsilon,t_1\right]$.
        \end{enumerate}
    \end{proposition}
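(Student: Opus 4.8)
The plan is to reduce both statements to the local sign analysis of a single scalar real-analytic function. For Part~\ref{prop:constrainedarc_end_feasibility_state} set $g_p(t)=\vc_p^\top\vx(t)+d_p$, and for Part~\ref{prop:constrainedarc_end_feasibility_control} set $h_p(t)=\widehat{\va}_p^\top\vx(t)-\um=-\frac{\vc_p^\top\vA^{r_p}}{\vc_p^\top\vA^{r_p-1}\vb}\vx(t)-\um$, using the control law \eqref{eq:constrainedarc_control}. On the arc the dynamics are fixed, $\dot\vx=\widehat{\vA}\vx+\widehat{\vb}$, so by Lemma~\ref{lemma:solution_linearODEs} the state $\vx(t)$ is given by the matrix-exponential formula and is real-analytic; hence $g_p$ and $h_p$ are real-analytic on $[t_0,t_1]$. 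Differentiating along the dynamics yields, for $r\geq1$, $g_p^{(r)}(t_0)=\vc_p^\top\widehat{\vA}^{r-1}(\widehat{\vA}\vx(t_0)+\widehat{\vb})$ and $h_p^{(r)}(t_0)=-\frac{\vc_p^\top\vA^{r_p}\widehat{\vA}^{r}}{\vc_p^\top\vA^{r_p-1}\vb}\vx(t_0)$, which are precisely the quantities appearing in the listed conditions.

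The engine of the proof is the elementary sign principle for a real-analytic $f$: on a right-neighborhood of $t_0$ the sign of $f$ is determined by $f(t_0)$ when $f(t_0)\neq0$, and otherwise by the sign of the first nonzero derivative, since $f(t)=\frac{(t-t_0)^{m}}{m!}f^{(m)}(t_0)+o((t-t_0)^{m})$ with $m$ the order of that derivative and $(t-t_0)^{m}>0$ for $t>t_0$. For sufficiency, each listed alternative places $f(t_0)<0$, or $f(t_0)=0$ with first nonzero derivative negative, or (case~\ref{prop:constrainedarc_end_feasibility_control_equal}) all derivatives through order $n$ equal to zero; the first two give $f<0$ on a right-neighborhood by the Taylor estimate, while in the third the second clause of Lemma~\ref{lemma:solution_linearODEs} forces $f\equiv\const=0$, i.e. $u\equiv\um$, which still satisfies $u\leq\um$.

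For necessity, suppose the inequality holds on $[t_0,t_0+\varepsilon]$. If $f(t_0)<0$ we land in the strict-interior case. If $f(t_0)=0$, I examine the first nonzero derivative, and here the second part of Lemma~\ref{lemma:solution_linearODEs}---derived from the minimal polynomial of $\widehat{\vA}$---caps the relevant order: should $f^{(r)}(t_0)=0$ for every $r\in[n]$, then $f\equiv\const$. In the state case this constant is $0$, i.e. $\vc_p^\top\vx+d_p\equiv0$ so that constraint $p$ is active; but $p\notin\mathcal{P}$ rules this out, forcing $f\not\equiv0$ and hence the existence of a first nonzero derivative of order $\hat{r}_p\leq n$, whose sign the principle pins to be negative---this is condition~(b). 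In the control case the constant regime is legitimate and resurfaces as case~\ref{prop:constrainedarc_end_feasibility_control_equal}; otherwise the same reasoning delivers case~\ref{prop:constrainedarc_end_feasibility_control_small_highorder}.

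The main obstacle I anticipate is the bookkeeping that both bounds the first nonzero derivative by $n$ and cleanly separates the genuinely feasible ``sitting on the boundary'' regime---permitted for the control in case~\ref{prop:constrainedarc_end_feasibility_control_equal} but forbidden for a nonactive state constraint---which is exactly where the $p\notin\mathcal{P}$ hypothesis and the minimal-polynomial conclusion of Lemma~\ref{lemma:solution_linearODEs} carry the argument. The analogous statements on $[t_1-\varepsilon,t_1]$ follow verbatim once the parity factor $(-1)^{\hat{r}_p}$ produced by $(t-t_1)^{\hat{r}_p}$ on a left-neighborhood is accounted for in the sign conditions.
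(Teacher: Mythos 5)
Your proposal is correct and follows essentially the same route as the paper: the paper likewise reduces Part~\ref{prop:constrainedarc_end_feasibility_state} to the first-nonvanishing-derivative sign analysis already used in Proposition~\ref{prop:connection}, and proves Part~\ref{prop:constrainedarc_end_feasibility_control} by the same dichotomy between ``$u<\um$ on a right-neighborhood'' (first nonzero derivative of $u-\um$ negative) and ``$u\equiv\um$'' (all derivatives through order $n$ vanish, via Lemma~\ref{lemma:solution_linearODEs}). Your explicit use of $p\notin\mathcal{P}$ to exclude the constant regime in the state case is a point the paper leaves implicit but is the right justification.
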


    Consider two connected arcs $\systembehavior_1$ and $\systembehavior_2$. Proposition \ref{prop:connection} guarantees the feasibility of constraints active along $\systembehavior_1$ or $\systembehavior_2$. If some constraints are active at the ends of arcs but are inactive in $\systembehavior_1$ and $\systembehavior_2$ a.e., then additional constraints at the connection, as defined in Definition \ref{def:additionalendconstraint}, should be introduced for feasibility based on Proposition \ref{prop:constrainedarc_end_feasibility}.

    \begin{definition}\label{def:additionalendconstraint}
        Consider two arcs $\systembehavior_i$ with active constraints $\mathcal{P}_i$ for $t\in\left[t_{i-1},t_i\right]$, $i\in\left\{1,2\right\}$. $\mathcal{P}_1$ and $\mathcal{P}_2$ induce the equality constraints $\vF\vx\left(t_{1}\right)+\vg=\vzero$ where $\vF$ has full row rank. By Proposition \ref{prop:constrainedarc_end_feasibility}, state constraints except $\mathcal{P}_1\cup\mathcal{P}_2$ and control constraints induce $\widehat{\vC}\vx\left(t_{1}\right)+\widehat{\vd}<\vzero$ and $\widehat{\vF}\vx\left(t_{1}\right)+\widehat{\vg}=\vzero$, where $\left[\begin{array}{c}
            \vF\\\widehat{\vF}
        \end{array}\right]$ has full row rank. Then, the \textit{additional end-constraint} of $\vx\left(t_{1}\right)$ is defined as $\addendconstraint=\left(\widehat{\vC},\widehat{\vd},\widehat{\vF},\widehat{\vg}\right)$. Furthermore, if $\widehat{\vC}$ and $\widehat{\vF}$ are empty matrices, then $\addendconstraint$ is called empty.
    \end{definition}

    \begin{remark}
        Given a feasible arc, the order $\hat{r}_p$ in Proposition \ref{prop:constrainedarc_end_feasibility} is directly determined. In Definition \ref{def:additionalendconstraint}, the additional end-constraint $\addendconstraint$ does not include the equality constraints induced by $\mathcal{P}_{1}\cup\mathcal{P}_{2}$. To avoid over-determination, the equality constraints in $\addendconstraint$ and $\mathcal{P}_{1}\cup\mathcal{P}_{2}$ are made linearly independent through elimination. The elimination process is guaranteed by the feasibility of the given trajectory.
    \end{remark}

    \begin{figure*}[!t]
        \begin{equation}\label{eq:augmentedswitchinglaw}
            \switchinglaw=\left(\systembehavior_1,\left\{\tangentmarker_k^{(1)}\right\}_{k=1}^{M_1}\right)\addendconstraint_1\left(\systembehavior_1,\left\{\tangentmarker_k^{(2)}\right\}_{k=1}^{M_1}\right)\addendconstraint_2\dots\addendconstraint_{N-1}\left(\systembehavior_N,\left\{\tangentmarker_k^{(N)}\right\}_{k=1}^{M_N}\right)
        \end{equation}
        \vspace{-5mm}
    \end{figure*}

\subsection{Tangent Markers in An Arc}\label{subsec:TangentMarkersArc}
    Proposition \ref{prop:connection} and Proposition \ref{prop:constrainedarc_end_feasibility} analyzed the feasibility of an arc near the end points. An arc might be tangent to some constrained boundary, where small perturbations may affect the feasibility of the arc. The following proposition provides a sufficient and necessary condition for the feasibility of the optimal trajectory tangent to constraints' boundaries.

    \begin{proposition}\label{prop:tangentcondition}
        Consider an arc $\systembehavior=\left(\widehat{\vA},\widehat{\vb},\vF,\vg,\mathcal{P}\right)$ for $\left[t_0,t_2\right]$. $\systembehavior$ is tangent to the boundary of the constraint $\vc^\top\vx+d\leq0$ at $t_1\in\left(t_0,t_2\right)$ if and only if $\exists \hat{r}\in\left[n\right]$ is even, s.t.
        \begin{equation}\label{eq:tangentcondition}
            \begin{dcases}
                \vc^\top\vx\left(t_1\right)+d=0,\\
                \forall r\in\left[\hat{r}-1\right],\,\vc^\top\widehat{\vA}^{r-1}\left(\widehat{\vA}\vx\left(t_1\right)+\widehat{\vb}\right)=0,\\
                \vc^\top\widehat{\vA}^{\hat{r}-1}\left(\widehat{\vA}\vx\left(t_1\right)+\widehat{\vb}\right)<0.
            \end{dcases}
        \end{equation}
    \end{proposition}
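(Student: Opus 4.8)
The plan is to reduce the proposition to a local sign analysis of the scalar function $g(t)\triangleq\vc^\top\vx(t)+d$ along the arc, where $\vx$ solves $\dot\vx=\widehat{\vA}\vx+\widehat{\vb}$ on $\left(t_0,t_2\right)$. First I would record that, by the explicit solution in Lemma~\ref{lemma:solution_linearODEs}, $\vx$ is real-analytic, so $g$ is real-analytic near $t_1$; a direct induction then gives $g^{(r)}(t)=\vc^\top\widehat{\vA}^{r-1}\left(\widehat{\vA}\vx(t)+\widehat{\vb}\right)$ for every $r\geq1$. Hence the quantities in \eqref{eq:tangentcondition} are exactly $g(t_1),g'(t_1),\dots,g^{(\hat r)}(t_1)$, and the entire statement becomes a claim about the order and sign of the first non-vanishing derivative of $g$ at $t_1$. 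I would adopt the working definition that $\systembehavior$ is \emph{tangent} to $\vc^\top\vx+d\leq0$ at $t_1$ when $g(t_1)=0$ while $g(t)\leq0$ on a punctured neighborhood of $t_1$ (the trajectory touches but does not cross the boundary), and note that this forces $g\not\equiv0$, since $g\equiv0$ would make the constraint active throughout the arc rather than touched in isolation.

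Next I would fix the order of vanishing. Because $g$ is analytic and $g\not\equiv0$, it has a well-defined first non-vanishing derivative at $t_1$; let $\hat r$ be the least positive integer with $g^{(\hat r)}(t_1)\neq0$. The second part of Lemma~\ref{lemma:solution_linearODEs}, applied at $t_1$ with dynamics $\dot\vx=\widehat{\vA}\vx+\widehat{\vb}$, says that $g^{(r)}(t_1)=0$ for all $r\in\left[n\right]$ would force $g\equiv\const=g(t_1)=0$; contrapositively $\hat r\leq n$, which is exactly the range $\hat r\in\left[n\right]$ demanded by the statement.

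Both implications then follow from the Taylor expansion
\begin{equation*}
g(t)=\frac{g^{(\hat r)}(t_1)}{\hat r!}\left(t-t_1\right)^{\hat r}+o\!\left(\left(t-t_1\right)^{\hat r}\right),\qquad t\to t_1 .
\end{equation*}
For sufficiency, if $\hat r$ is even and $g^{(\hat r)}(t_1)<0$ as in \eqref{eq:tangentcondition}, then $\left(t-t_1\right)^{\hat r}\geq0$ makes $g(t)\leq0$ (indeed $g(t)<0$ for $t\neq t_1$) near $t_1$, so the arc is tangent. For necessity, tangency gives $g(t)\leq0$ on both sides of $t_1$; were $\hat r$ odd, the factor $\left(t-t_1\right)^{\hat r}$ would change sign and $g$ would be positive on one side, a contradiction, so $\hat r$ is even; and with $\hat r$ even the sign of $g(t)$ for $t\neq t_1$ equals that of $g^{(\hat r)}(t_1)$, whence $g^{(\hat r)}(t_1)\leq0$, which $g^{(\hat r)}(t_1)\neq0$ upgrades to the strict inequality in \eqref{eq:tangentcondition}.

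The main obstacle I anticipate is not the Taylor estimate itself but pinning down the tangency notion so that the analytic dichotomy applies cleanly: one must argue that an interior touching point is isolated (so that ``$g\leq0$ nearby'' is genuinely equivalent to the leading-coefficient sign condition) and separate it from the case $g\equiv0$ already absorbed into a constrained arc, and one must check that the bound $\hat r\leq n$ is really supplied by Lemma~\ref{lemma:solution_linearODEs} rather than assumed. Everything else is the standard ``first non-vanishing derivative governs the local extremum'' argument for a real-analytic function.
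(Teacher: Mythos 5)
Your proposal is correct and follows essentially the same route as the paper's proof: both reduce the statement to the order and sign of the first non-vanishing derivative of $\vc^\top\vx+d$ at $t_1$, use Lemma~\ref{lemma:solution_linearODEs} to rule out all derivatives up to order $n$ vanishing (hence $\hat r\leq n$), and settle both directions by the parity/sign analysis of the leading Taylor term. The only cosmetic difference is that you phrase necessity via a single Taylor expansion while the paper argues it through separate ``crosses zero'' and ``strict local minimum'' contradictions; the content is identical.
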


    Regardless of whether an arc $\systembehavior=\left(\widehat{\vA},\widehat{\vb},\vF,\vg,\mathcal{P}\right)$ is constrained, $\systembehavior$ can be tangent to the boundary of the state constraint $\vc_p^\top\vx+d_p\leq0$ where $p\not\in\mathcal{P}$. It is noteworthy that $\systembehavior$ can also be tangent to the boundary of the control constraints $u\leq\um$ and $-u\leq\um$. For this case, $\systembehavior$ should be constrained. Arbitrarily consider $p\in\mathcal{P}$. According to Proposition \ref{prop:constrainedarc_control}, $\pm u\leq\um$ is equivalent to $\mp\frac{\vc_p^\top\vA^{r_p}\vx}{\vc_p^\top\vA^{r_p-1}\vb}\leq\um$. In other words, the constraints on $u$ are equivalent to the constraints on $\vx$. Hence, Proposition \ref{prop:tangentcondition} can apply for both state constraints and control constraints. For convenience, $\forall p\in\left[P\right]$, the $p$-th constraint refers to $\vc_p^\top\vx+d_p\leq 0$. The $\left(P+1\right)$-th and $\left(P+2\right)$-th one refer to $u\leq\um$ and $-u\leq\um$, respectively.

    Another noteworthy fact is that an arc $\systembehavior$ can be tangent to multiple constraints $\forall p\in\mathcal{P}'\subset\left[P+2\right]$ at a time point $t_1$. Then, the necessary and sufficient condition for feasibility is $\forall p\in\mathcal{P}'$, \eqref{eq:tangentcondition} holds. The tangent marker is defined as follows:

    \begin{definition}\label{def:tangentmarker}
        Consider an arc $\systembehavior=\left(\widehat{\vA},\widehat{\vb},\vF,\vg,\mathcal{P}\right)$ for $t\in\left[t_0,t_2\right]$. Assume that at $t_1\in\left(t_0,t_2\right)$, the arc $\systembehavior$ is tangent to some constraints' boundaries whose indices constitute a nonempty set $\mathcal{P}'\subset\left[P+2\right]$. By Proposition \ref{prop:tangentcondition}, the tangent condition \eqref{eq:tangentcondition} is equivalent to $\widehat{\vC}\vx\left(t_1\right)+\widehat{\vd}<\vzero$ and $\widehat{\vF}\vx\left(t_1\right)+\widehat{\vg}=\vzero$, where $\left[\begin{array}{c}
            \vF\\\widehat{\vF}
        \end{array}\right]$ has full row rank. Then, the \textit{tangent marker} is defined as $\tangentmarker=\left(\widehat{\vC},\widehat{\vd},\widehat{\vF},\widehat{\vg},\mathcal{P}'\right)$.
    \end{definition}

    \begin{remark}
        In Definition \ref{def:tangentmarker}, the equality constraints induced by \eqref{eq:tangentcondition} can be linearly dependent. However, for a given feasible trajectory, the equalities should have a feasible solution; hence, equality constraints with full row rank, i.e., $\widehat{\vF}\vx\left(t_1\right)+\widehat{\vg}=\vzero$, can be constructed through elimination. Therefore, the tangent marker is well-defined.
    \end{remark}

\subsection{Augmented Switching Law}\label{subsec:AugmentedSwitchingLaw}
    Sections \ref{sec:ArcAnalysis}, \ref{subsec:AdditionalConstraintsEndArc}, and \ref{subsec:TangentMarkersArc} analyze the feasibility of the optimal trajectory at the end points of arcs and the tangent points to the constrained boundaries. To characterize the optimal trajectory, this section proposes the augmented switching law, which represents the input control and the feasibility of the trajectory in a compact form.

    \begin{definition}\label{def:augmentedswitchinglaw}
        Consider a feasible BBS trajectory of problem \eqref{eq:optimalproblem} with $N$ arcs, denoted as $\systembehavior_i$, $i\in\left[N\right]$. During $\systembehavior_i$, $M_i$ tangent markers occur, denoted as $\tangentmarker_k^{(i)}$, $k\in\left[M_i\right]$. At the connection of $\systembehavior_i$ and $\systembehavior_{i+1}$, the additional end-constraint is denoted as $\addendconstraint_i$. Then, time points at tangent markers and ends of arcs are called \textit{keypoints} for trajectory feasibility. The \textit{augmented switching law} (ASL) $\switchinglaw$ is defined in \eqref{eq:augmentedswitchinglaw}.
    \end{definition}

    \begin{remark}
        If a feasible trajectory with a finite number of arcs follows the BBS control law \eqref{eq:bang_singular_bang_law}, then Propositions \ref{prop:connection}, \ref{prop:constrainedarc_end_feasibility}, and \ref{prop:tangentcondition} hold for the trajectory. For this case, the ASL can also represent the feasible trajectory.
    \end{remark}

    \begin{remark}
        If a feasible non-chattering BBS trajectory is perturbed by a sufficiently small perturbation, then the perturbed trajectory can only exceed constraints' boundaries at the keypoints since the distance between the original trajectory and constraints' boundaries is 0 at the keypoints, while it is strictly positive at other points. Therefore, the ASL can not only characterize the input control but also describe the feasibility of the trajectory.
    \end{remark}

    \begin{remark}
        The switching law in Definition \ref{def:switchinglaw} corresponding to the ASL in Definition \ref{def:augmentedswitchinglaw} is $\systembehavior_1\systembehavior_2\dots\systembehavior_N$. It can be observed that the ASL provides more detailed information on the feasibility of the trajectory at keypoints than the switching law. Notably, neither the switching law nor the ASL provides the motion time of each arc.
    \end{remark}

    To simplify the notation, denote an arc without tangent markers $\left(\systembehavior,\varnothing\right)$ as $\systembehavior$ in an ASL. If $\addendconstraint$ is empty, then $\addendconstraint$ is omitted. The additional end-constraints before $\systembehavior_1$ and after $\systembehavior_{N}$ are omitted since the initial and terminal states are given, i.e., $\vx_0$ and $\vx_\f$. For the example shown in Fig. \ref{fig:ASL_demo}, the ASL is $\left(\systembehavior_1,\left\{\tangentmarker_1^{(1)}\right\}\right)\addendconstraint_1\left(\systembehavior_2,\left\{\tangentmarker_2^{(1)},\tangentmarker_2^{(2)}\right\}\right)\systembehavior_3$.

    For convenience of understanding, the chain-of-integrator system with full box state constraints in Example \ref{example:chainofintegrators_boxconstraint} is considered as an example that was discussed in our previous works \cite{wang2025time,wang2025chattering}. The system behaviors where $u\equiv\um$ and $u\equiv-\um$, i.e., unconstrained arcs, are denoted as $\overline{0}$ and $\underline{0}$, respectively. The system behaviors where $x_k\equiv x_{\m k}$ and $x_k\equiv-x_{\m k}$, i.e., constrained arcs, are denoted as $\overline{k}$ and $\underline{k}$, respectively. Evidently, during $\overline{k}$ or $\underline{k}$, $u\equiv0$ and $\forall j\in\left[k-1\right]$, $x_j\equiv0$. The tangent marker where $\vx$ is tangent to $x_{\m k}$ and $-x_{\m k}$ of even order $2l<k$ in Proposition \ref{prop:tangentcondition} is denoted as $\left(\overline{k},2l\right)$ and $\left(\underline{k},2l\right)$, respectively. The additional end-constraint $\addendconstraint=\left\{\left(s_k,r_k\right)\right\}_{k=1}^K$ means that the additional state constraints $s_k$ is active with order $r_k$ in Proposition \ref{prop:constrainedarc_end_feasibility}, where $s_k=\overline{j}$ and $s_k=\underline{j}$ refer to the constraints $x_j\leq x_{\m j}$ and $-x_j\leq x_{\m j}$, respectively. Note that the constraints on control are inactive at the ends of constrained arcs since $u\equiv0$ in a constrained arc. Based on the above simplified notations, an example is provided to illustrate the ASL.

    \begin{example}\label{example:MIM_4order}
        Consider a position-to-position problem for 4th-order chain-of-integrator systems with full box constraints shown in Fig. \ref{fig:example_augmentedswitchinglaw}. A feasible suboptimal trajectory with a BBS form, planned by the manifold-intercepted method (MIM) \cite{wang2025time}, is shown in Fig. \ref{fig:example_augmentedswitchinglaw}(a). It can be observed that the ASL is $\switchinglaw=\overline{01}\underline{0}\overline{2}\underline{01}\overline{03}\underline{01}\overline{0}\underline{2}\overline{01}\underline{0}$.
    \end{example}

    \begin{figure}[!t]
        \centering
        \includegraphics[width=\columnwidth]{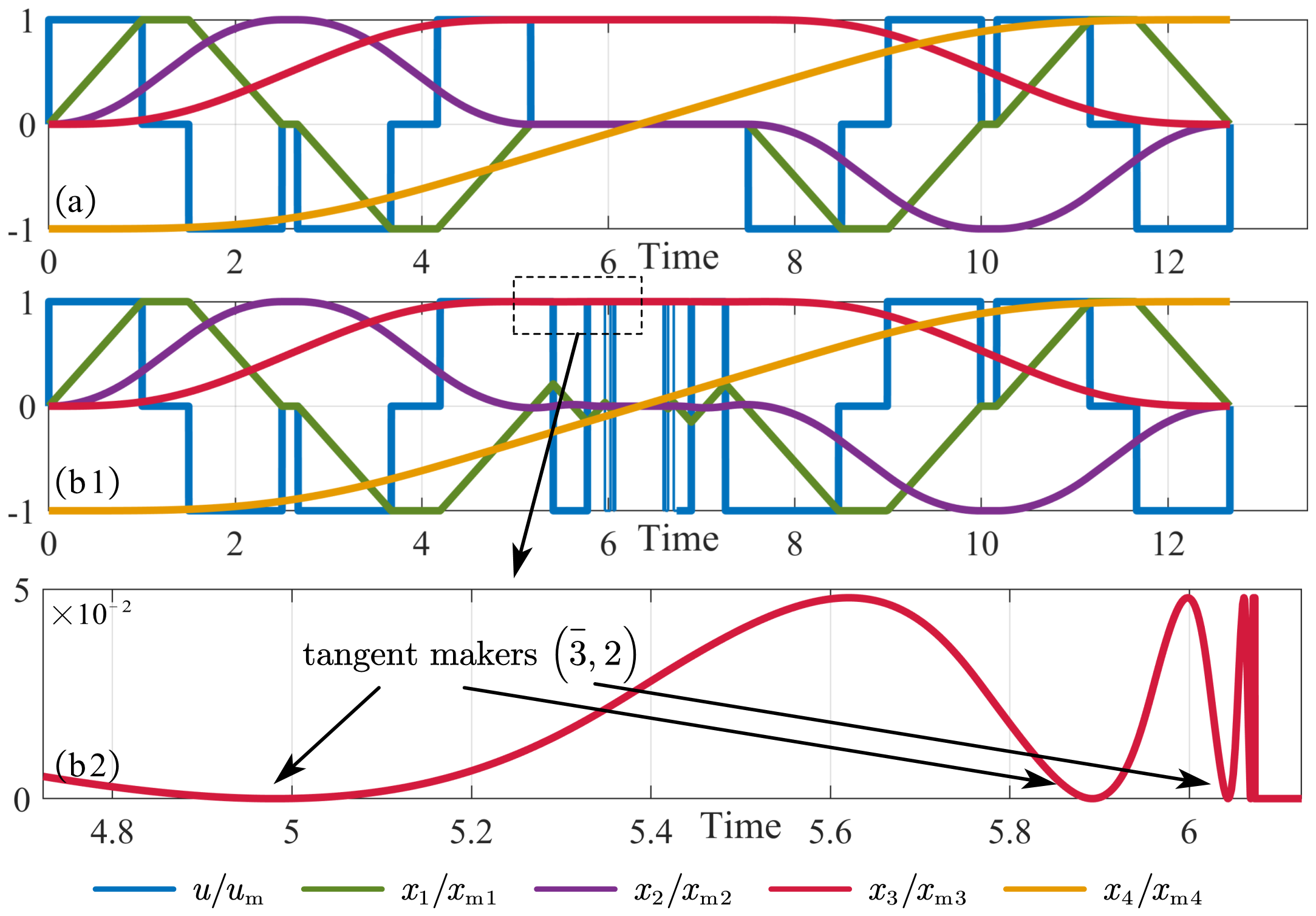}
        \caption{Two examples of ASLs. For a 4th-order chain-of-integrator system with full box state constraints, let $\um=1$, $x_{\m1}=1$, $x_{\m2}=1.5$, $x_{\m3}=4$, $x_{\m4}=15$, $\vx_0=-x_{\m4}\ve_4$, and $\vxf=x_{\m4}\ve_4$. (a) A non-chattering feasible trajectory planned by \cite{wang2025time}. (b) A chattering optimal trajectory planned by \cite{wang2025chattering}. (b2) is the plot of $\hat{x}_3\left(t\right)=\left(x_{\m3}-x_3\left(t\right)\right)\left(t_\infty-t\right)^{-3}$ where $t_\infty\approx6.0732$ is a chattering limit time.}
        \label{fig:example_augmentedswitchinglaw}
    \end{figure}

    \begin{example}\label{example:chattering_4order}
        Our previous work \cite{wang2025chattering} pointed out that the chattering phenomenon occurs in the optimal trajectory for the problem in Example \ref{example:MIM_4order}. The chattering optimal trajectory is shown in Fig. \ref{fig:example_augmentedswitchinglaw}(b1). Fig. \ref{fig:example_augmentedswitchinglaw}(b2) plots the trajectory of $x_3$ during the chattering period with amplitude compensation applied. It can be observed that the trajectory during the left chattering period consists of a cycle of $\left(\overline{0},\left\{\left(\overline{3},2\right)\right\}\right)\underline{0}$. According to Bellman's principle of optimality \cite{bellman1952theory}, the trajectory in $\left[0,t_\infty-\varepsilon\right]$ is also time-optimal, where $\varepsilon>0$ is sufficiently small. The augmented switching of the above sub-arc begins with $\overline{01}\underline{0}\overline{2}\underline{01}\left(\overline{0},\left\{\left(\overline{3},2\right)\right\}\right)\underline{0}\left(\overline{0},\left\{\left(\overline{3},2\right)\right\}\right)\underline{0}\dots$
    \end{example}

\section{A State-Centric Necessary Condition}\label{sec:StateCentricNecessaryCondition}
The theoretical framework for the ASL of time-optimal control for linear systems is established in Section \ref{sec:KeypointsFeasibilityOptimalTrajectory}. Based on the established framework, this section provides a state-centric necessary condition for the optimal control of problem \eqref{eq:optimalproblem}. Section \ref{subsec:UniquenessTimeOptimalControl} discusses the uniqueness of the time-optimal control. Then, given a feasible BBS trajectory, Section \ref{subsec:FeasibilityDisturbedTrajectory} analyzes the feasibility of the perturbed BBS trajectory. Based on the conclusions in Sections \ref{subsec:UniquenessTimeOptimalControl} and \ref{subsec:FeasibilityDisturbedTrajectory}, Section \ref{subsec:NecessaryConditionOptimalTrajectory} provides a necessary condition of optimality.

\subsection{Uniqueness of Time-Optimal Control}\label{subsec:UniquenessTimeOptimalControl}
    The uniqueness of the time-optimal control for problem \eqref{eq:optimalproblem} is proved in Theorem \ref{thm:uniqueness_nochattering} as preliminaries.

    \begin{theorem}\label{thm:uniqueness_nochattering}
        Under Assumption \ref{assumption:feasible_optimal}, the time-optimal control of problem \eqref{eq:optimalproblem} is unique in an a.e. sense. In other words, if $u=u_1^*\left(t\right)$ and $u=u_2^*\left(t\right)$, $t\in\left[0,t_\f^*\right]$, are both optimal, then $u_1^*\left(t\right)=u_2^*\left(t\right)$ a.e.; hence, $\forall t\in\left[0,t_\f^*\right]$, $\vx_1^*\left(t\right)=\vx_2^*\left(t\right)$.
    \end{theorem}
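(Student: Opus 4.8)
The plan is to exploit the linearity of the dynamics together with the convexity of the feasible set, reducing the uniqueness claim to a homogeneous linear differential equation for the difference of the two trajectories. Since $u_1^*$ and $u_2^*$ are both time-optimal, they share the same minimal terminal time $t_\f^*$. First I would form the averaged control $u_\mathrm{a}=\tfrac12\left(u_1^*+u_2^*\right)$ with associated state $\vx_\mathrm{a}=\tfrac12\left(\vx_1^*+\vx_2^*\right)$; by superposition $\dot\vx_\mathrm{a}=\vA\vx_\mathrm{a}+\vb u_\mathrm{a}$, and $\vx_\mathrm{a}$ inherits the boundary data $\vx_\mathrm{a}\left(0\right)=\vx_0$ and $\vx_\mathrm{a}\left(t_\f^*\right)=\vx_\f$. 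The control bound $\abs{u_\mathrm{a}}\leq\um$ and the state constraint $\vC\vx_\mathrm{a}+\vd\leq\vzero$ both follow from convexity, so $u_\mathrm{a}$ is feasible and reaches $\vx_\f$ in the optimal time; hence $u_\mathrm{a}$ is itself optimal. By Assumption \ref{assumption:feasible_optimal}, $\vx_\mathrm{a}$ is non-chattering and thus decomposes into finitely many arcs.

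Next I would analyze $u_1^*$ and $u_2^*$ along this arc decomposition of $\vx_\mathrm{a}$. On an unconstrained arc of $\vx_\mathrm{a}$, Proposition \ref{prop:unconstrainedarc_nosingular} applied to the optimal trajectory $\vx_\mathrm{a}$ forces $u_\mathrm{a}\equiv\pm\um$ a.e.; since $\abs{u_1^*},\abs{u_2^*}\leq\um$ while their mean has magnitude $\um$, both must equal the same $\pm\um$, so $u_1^*=u_2^*$ a.e. there, regardless of the individual arc status of $\vx_1^*$ or $\vx_2^*$. On a constrained arc of $\vx_\mathrm{a}$ where some $\vc_p^\top\vx_\mathrm{a}+d_p\equiv0$, the fact that the convex combination of the two nonpositive quantities $\vc_p^\top\vx_i^*+d_p$ vanishes identically forces $\vc_p^\top\vx_1^*+d_p\equiv\vc_p^\top\vx_2^*+d_p\equiv0$ on that interval; the same constraint is therefore active for both individual trajectories, and Proposition \ref{prop:constrainedarc_control} yields $u_i^*=\widehat{\va}_p^\top\vx_i^*$ with a common feedback vector $\widehat{\va}_p$.

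Finally, define $\vz=\vx_1^*-\vx_2^*$, so that $\vz\left(0\right)=\vzero$ and $\dot\vz=\vA\vz+\vb\left(u_1^*-u_2^*\right)$. The two cases show $u_1^*-u_2^*=0$ on unconstrained arcs of $\vx_\mathrm{a}$ and $u_1^*-u_2^*=\widehat{\va}_p^\top\vz$ on constrained arcs, so in either case $\dot\vz=\left(\vA+\vb\vector{\kappa}^\top\right)\vz$ for a piecewise-constant, bounded $\vector{\kappa}$. This is a homogeneous linear system with $\vz\left(0\right)=\vzero$, so Gr\"onwall's inequality (equivalently, uniqueness for linear ordinary differential equations as in Lemma \ref{lemma:solution_linearODEs}) gives $\vz\equiv\vzero$, whence $\vx_1^*\equiv\vx_2^*$ and $u_1^*=u_2^*$ a.e. I expect the main obstacle to be the treatment of the state constraints: the reachable-set and supporting-hyperplane argument that settles the purely control-constrained case does not directly pin down the optimizer once state constraints bind, and the crux is the observation that a binding constraint for the averaged trajectory forces both original trajectories onto the same boundary manifold, after which Proposition \ref{prop:constrainedarc_control} collapses the control difference into a linear feedback of $\vz$.
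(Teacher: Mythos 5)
Your proof is correct, and it shares the paper's central device --- forming the convex combination $u_\mathrm{a}=\tfrac12\left(u_1^*+u_2^*\right)$, noting it is feasible and hence optimal, and exploiting that an optimal control must be bang ($\pm\um$) wherever the state constraints are strictly inactive --- but it organizes the argument differently. The paper localizes at the first time $t_0$ where the two trajectories disagree and runs an exhaustive case analysis on the types (unconstrained/constrained) of the two individual trajectories' first arcs after $t_0$, deriving a contradiction in each case; you instead decompose the whole horizon by the arcs of the \emph{averaged} trajectory and finish with a homogeneous linear ODE argument for $\vz=\vx_1^*-\vx_2^*$. Your handling of the constrained case is the cleaner of the two: the observation that $\vc_p^\top\vx_\mathrm{a}+d_p\equiv0$ forces the same constraint to bind for both individual trajectories (a convex combination of nonpositive quantities vanishing) makes the paper's separate sub-case of disjoint active sets unnecessary, since in your decomposition that situation simply lands in a strictly unconstrained arc of $\vx_\mathrm{a}$ where the bang argument applies; and Proposition \ref{prop:constrainedarc_control} does give the feedback $u_i^*=\widehat{\va}_p^\top\vx_i^*$ for any trajectory with the constraint identically active, independent of optimality, so applying it to the individual trajectories is legitimate. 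What the paper's localization buys is that it needs arc structure only near a single point, whereas you need the global finite arc decomposition of $\vx_\mathrm{a}$ (justified, as you note, by Assumption \ref{assumption:feasible_optimal}, with the isolated tangent points being measure-zero and harmless to both the a.e.\ control identity and the Gr\"onwall step); what yours buys is a shorter, contradiction-free conclusion. Both arguments are at the same level of rigor on the implicit finitely-many-arcs decomposition.
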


    \begin{remark}
        Based on Theorem \ref{thm:uniqueness_nochattering}, it can be proved that the optimal control of problem \eqref{eq:optimalproblem} is unique in an a.e. sense when there exists at most one chattering limit point. The proof is omitted since it does not contribute to the proposed state-centric necessary condition in Section \ref{subsec:NecessaryConditionOptimalTrajectory}.
    \end{remark}

    According to Theorem \ref{thm:uniqueness_nochattering}, if the time-optimal trajectory is perturbed by a sufficiently and finitely small perturbation, then the resulting perturbed trajectory achieves a strictly longer terminal time than the optimal terminal time. The above observation will be applied in Section \ref{subsec:FeasibilityDisturbedTrajectory}.

\subsection{Feasibility of the Perturbed BBS Trajectory}\label{subsec:FeasibilityDisturbedTrajectory}
    Consider a feasible BBS trajectory $\vx=\vx\left(t\right)$ of problem \eqref{eq:optimalproblem} without chattering. The ASL is denoted as $\switchinglaw$. For convenience of discussion, the keypoints of $\switchinglaw$ occur at $t_i$, where $0=t_0<t_1<\dots<t_M=t_\f$. In other words, $\left\{t_i\right\}_{i=0}^M$ consists of time points at ends of arcs and tangent markers. Denote $\vt=\left(t_i\right)_{i=1}^M$, and $\dot\vx=\vA_i\vx+\vb_{i}$ in $\left(t_{i-1},t_i\right)$. This section discusses the feasibility of a perturbed trajectory $\vx=\vx'\left(t\right)$ represented by the ASL $\switchinglaw'$, where the time vector $\vt'=\left(t_i'\right)_{i=1}^M$ is close to $\vt$, i.e., $\norm[]{\vt'-\vt}$ is sufficiently small. For convenience, the notation $\norm[]{\bullet}$ refers to $\norm[\infty]{\bullet}$ in this section if not specified.

    The feasibility of the perturbed trajectory is proved by two steps. Firstly, in Proposition \ref{prop:disturbedtrajectory_error}, the state error between the perturbed trajectory and the original trajectory is bounded by $C\norm[]{\vt-\vt'}$ where $C$ is a constant dependent on the original trajectory. In other words, if the perturbation of keypoints' time is sufficiently small and the augmented switching law is fixed, then the states of the perturbed trajectory are close to those of the original trajectory. Secondly, Theorem \ref{thm:disturbedtrajectory_feasibility} guarantees the feasibility of the perturbed trajectory based on Propositions \ref{prop:connection}, \ref{prop:constrainedarc_end_feasibility}, \ref{prop:tangentcondition}, and \ref{prop:disturbedtrajectory_error}. In other words, if the original trajectory is feasible, the perturbation of keypoints' time is sufficiently small, and the augmented switching law is fixed, then the perturbed trajectory is also feasible. Specifically, Proposition \ref{prop:disturbedtrajectory_error} helps to provide an upper bound on the perturbation under the requirement of the perturbed trajectory's feasibility in Theorem \ref{thm:disturbedtrajectory_feasibility}.

    \begin{proposition}\label{prop:disturbedtrajectory_error}
        Consider a BBS trajectory $\vx\left(t\right)$ satisfying $\forall i\in\left[M\right]$, $\dot\vx=\vA_i\vx+\vb_i$ in $\left(t_{i-1},t_i\right)$. Denote $\vt=\left(t_i\right)_{i=1}^M$. $\exists C>0$ only dependent on $\vx=\vx\left(t\right)$, s.t. $\forall\varepsilon\in\left(0,\frac12\min_{i\in\left[M-1\right]}\left\{\abs{t_{i+1}-t_i}\right\}\right)$, $\forall \vx'\left(t\right)$ is a perturbed trajectory induced by $\vt'=\left(t_i'\right)_{i=1}^M$ with $\norm[]{\vt-\vt'}<\varepsilon$,
        \begin{equation}\label{eq:disturbedtrajectory_error_sametime}
            \norm[\infty]{\vx-\vx'}\triangleq\sup_{t\in\left[t_0,\min\left\{t_M,t_M'\right\}\right]}\norm[]{\vx\left(t\right)-\vx'\left(t\right)}\leq C\varepsilon
        \end{equation}
        where $\forall i\in\left[M\right]$, $\dot\vx'=\vA_i\vx'+\vb_i$ in $\left(t_{i-1}',t_i'\right)$ and $t_0'=t_0$.
    \end{proposition}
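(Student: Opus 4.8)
The plan is to reduce the estimate to the explicit affine flow of each arc and then propagate the error from one keypoint to the next through a linear, Gr\"onwall-type recursion. By Lemma~\ref{lemma:solution_linearODEs}, on arc $i$ the flow is the affine map $\Phi_i(t,s)[\vy]=\e^{\vA_i(t-s)}\vy+\left(\int_s^t\e^{\vA_i(t-\tau)}\mathrm{d}\tau\right)\vb_i$, so that $\vx(t)=\Phi_i(t,t_{i-1})[\vx(t_{i-1})]$ along the original arc and $\vx'(t)=\Phi_i(t,t_{i-1}')[\vx'(t_{i-1}')]$ along the perturbed arc. Since $\varepsilon<\varepsilon_0\triangleq\tfrac12\min_{i}\abs{t_{i+1}-t_i}$, the perturbed switching times retain their order and every perturbed arc keeps positive length, so all $2M$ endpoints remain in the fixed compact set $\left[t_0-\varepsilon_0,t_M+\varepsilon_0\right]$; there each $\Phi_i$ is analytic in $(t,s)$ with bounded derivatives. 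First I would fix two constants depending only on $\vx(t)$: a bound $K$ on $\norm[]{\e^{\vA_i\theta}}$ over the admissible gaps $\theta$, controlling the linear part of $\Phi_i$, and a Lipschitz constant $L$ for $(t,s)\mapsto\Phi_i(t,s)[\vy]$ uniform over all $\vy$ in a ball containing the node states, so that the dependence on $\vy$ is absorbed into a fixed radius.

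Next I would bound the keypoint error $\vdelta_i\triangleq\vx(t_i)-\vx'(t_i')$. Splitting $\vx(t_i)-\vx'(t_i')=\big(\Phi_i(t_i,t_{i-1})[\vx(t_{i-1})]-\Phi_i(t_i',t_{i-1}')[\vx(t_{i-1})]\big)+\big(\Phi_i(t_i',t_{i-1}')[\vx(t_{i-1})]-\Phi_i(t_i',t_{i-1}')[\vx'(t_{i-1}')]\big)$ separates a time-shift term, bounded by $2L\varepsilon$ because $\abs{t_i-t_i'}$ and $\abs{t_{i-1}-t_{i-1}'}$ are below $\varepsilon$, from a state-propagation term equal to $\e^{\vA_i(t_i'-t_{i-1}')}\vdelta_{i-1}$, whose norm is at most $K\norm[]{\vdelta_{i-1}}$. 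With $\vdelta_0=\vzero$ (both trajectories start at $\vx_0$ at $t_0'=t_0$) this gives the recursion $\norm[]{\vdelta_i}\le K\norm[]{\vdelta_{i-1}}+2L\varepsilon$, hence $\norm[]{\vdelta_i}\le 2L\varepsilon\sum_{j=0}^{i-1}K^j\le C_1\varepsilon$ uniformly in $i\in\left[M\right]$. Carrying out the identical splitting with $t_i$ replaced by a common physical time $t$ shows $\norm[]{\vx(t)-\vx'(t)}\le\left(L+KC_1\right)\varepsilon$ on every subinterval where both trajectories obey the same arc dynamics.

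It then remains to treat the transition zones, where at a fixed time the two trajectories run on adjacent arcs. Each such zone lies between $t_i$ and $t_i'$ and so has width below $\varepsilon$. Evaluating both flows at the shared physical switching time, where they momentarily use the same dynamics, the same-arc bound gives $\norm[]{\vx(t_i)-\vx'(t_i)}\le\left(L+KC_1\right)\varepsilon$; combining this with a uniform velocity bound $V$ (from boundedness of states and dynamics on the compact set) and the triangle inequality yields $\norm[]{\vx(t)-\vx'(t)}\le\left(L+KC_1+2V\right)\varepsilon$ throughout the zone. Taking $C$ to be the maximum of the constants produced in the three cases establishes \eqref{eq:disturbedtrajectory_error_sametime}.

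The main obstacle is precisely this transition-zone bookkeeping: at a fixed physical time the original and perturbed states are generated by different affine dynamics, so a direct comparison on a common arc is unavailable. The resolution is to compare the two flows at the shared switching instant and absorb the residual drift into the $O(\varepsilon)$ width of the zone via the velocity bound. The secondary point needing care is the uniformity of $K$, $L$, $V$, and $C_1$ in the perturbation $\vt'$; this follows once every admissible trajectory is confined to a single $\varepsilon_0$-independent compact set, which the constraint $\varepsilon<\varepsilon_0$ guarantees.
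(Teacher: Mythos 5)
Your proof is correct and follows essentially the same route as the paper's: both rely on the explicit affine flow of Lemma~\ref{lemma:solution_linearODEs}, split time into intervals where the two trajectories share the same dynamics and $O(\varepsilon)$-width mismatch zones between $t_i$ and $t_i'$, and run a linear recursion across arcs that yields a constant growing geometrically in $M$. The only differences are bookkeeping ones — you first bound the keypoint error $\vx(t_i)-\vx'(t_i')$ and then convert to the sup over common times using a velocity bound in the transition zones, whereas the paper recurses directly on the error at the interleaved times $\hat{t}_j=\min/\max\{t_j,t_j'\}$ and bounds the zone drift via $\norm[]{\e^{\vA_j\theta}-\e^{\vA_{j'}\theta}}\leq C_1\theta$ — and neither change affects the substance.
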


    \begin{theorem}\label{thm:disturbedtrajectory_feasibility}
        Consider a feasible non-chattering BBS trajectory $\vx=\vx\left(t\right)$ whose ASL is $\switchinglaw$ in problem \eqref{eq:optimalproblem}. Then, $\exists\varepsilon>0$ dependent on $\vx=\vx\left(t\right)$, s.t. $\forall \vx=\vx'\left(t\right)$ is a trajectory also represented by $\switchinglaw'=\switchinglaw$, if $\norm[]{\vt-\vt'}<\varepsilon$, then $\vx=\vx'\left(t\right)$ is feasible for problem \eqref{eq:optimalproblem}. $\vx\left(t\right)$ and $\vx'\left(t\right)$ are induced by $\vt$ and $\vt'$ in the same way as that in Proposition \ref{prop:disturbedtrajectory_error}.
    \end{theorem}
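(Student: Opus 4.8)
The plan is to split the time axis into two regimes and verify the constraints of problem \eqref{eq:optimalproblem} separately on each: the interiors of the arcs, where the original trajectory is uniformly separated from every inactive constraint boundary, and small neighborhoods of the keypoints, where feasibility is dictated by the strict leading-derivative inequalities of Propositions \ref{prop:connection}, \ref{prop:constrainedarc_end_feasibility}, and \ref{prop:tangentcondition}. Because $\switchinglaw'=\switchinglaw$, the perturbed trajectory inherits the full equality structure of the ASL---the arc data $(\vA_i,\vb_i)$, the active constraints held as equalities on constrained arcs, and the tangency and end-constraint equalities---so only the inequality path constraints \eqref{eq:optimalproblem_inequality_x} and the control bound \eqref{eq:optimalproblem_inputconstraint} remain to be checked. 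Two consequences of Proposition \ref{prop:disturbedtrajectory_error} drive the argument: the uniform estimate $\norm[\infty]{\vx-\vx'}\le C\norm[]{\vt-\vt'}$, and, since each $\vx'$ solves a linear ODE with fixed arc data over a bounded horizon and is thus Lipschitz in $t$ with a perturbation-independent constant $L$, the keypoint-state convergence $\norm[]{\vx'(t_i')-\vx(t_i)}\le(C+L)\varepsilon$.

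For the interior regime, the non-chattering hypothesis bounds the number of arcs while the analyticity of each $\vc^\top\vx(t)+d$ (Lemma \ref{lemma:solution_linearODEs}) limits the isolated boundary contacts per arc, so $\switchinglaw$ has finitely many keypoints; after excising an open neighborhood of fixed radius $\rho$ about each keypoint, the remaining time set is compact, and by continuity and feasibility of $\vx$ its distance to every inactive state-constraint boundary is bounded below by some $\delta>0$. On unconstrained arcs the control obeys $u'\equiv\pm\um$ exactly, as for the original, while on constrained arcs the control bound is equivalent to a state inequality (Section \ref{subsec:TangentMarkersArc}) and is covered by the same margin. Choosing $\varepsilon$ so small that $C\varepsilon<\delta$, the estimate $\norm[\infty]{\vx-\vx'}\le C\varepsilon$ keeps all these constraints strictly satisfied for $\vx'$ on this set.

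For the keypoint regime, at each keypoint the governing function $\vc^\top\vx+d$ (with the control bound treated as the $(P+1)$-th or $(P+2)$-th constraint exactly as in Section \ref{subsec:TangentMarkersArc}) has value $0$, derivatives vanishing up to order $\hat{r}-1$, and a strictly negative $\hat{r}$-th derivative, which are precisely the conditions of Propositions \ref{prop:constrainedarc_end_feasibility} and \ref{prop:tangentcondition} and, at connections, Proposition \ref{prop:connection}. Each such derivative is a fixed polynomial in the keypoint state through the arc matrices, hence continuous; the vanishing of the lower-order derivatives is preserved exactly by the inherited ASL equalities, while the keypoint-state convergence keeps the $\hat{r}$-th derivative strictly negative and uniformly bounded away from $0$. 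A Taylor expansion with uniformly bounded remainder then yields a radius $\rho$, independent of the small perturbation, on which $\vc^\top\vx'+d<0$ throughout the punctured $\rho$-neighborhood of each keypoint. Taking this $\rho$ as the excision radius of the interior step and shrinking $\varepsilon$ further to enforce the keypoint inequalities closes the cover of $[t_0,\max\{t_M,t_M'\}]$, the terminal sliver $[\min\{t_M,t_M'\},\max\{t_M,t_M'\}]$ being absorbed into the last keypoint's neighborhood since $\vxf$ strictly satisfies the inactive constraints.

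The hard part is the keypoint regime: the perturbation must preserve not only the sign but the order $\hat{r}$ of the first nonzero derivative, or else the Taylor sign argument collapses, and the feasibility neighborhood must not shrink to $0$ as $\vt'$ varies. Both are secured by holding the lower-order derivatives exactly at $0$ via the fixed ASL equalities and by the uniform lower bound on $\abs{\vc^\top\widehat{\vA}^{\hat{r}-1}(\widehat{\vA}\vx'+\widehat{\vb})}$, which allows $\rho$ to be fixed before $\varepsilon$.
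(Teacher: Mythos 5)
Your proposal is correct and follows essentially the same route as the paper's proof: both rest on Proposition \ref{prop:disturbedtrajectory_error}'s uniform state-error bound to handle the compact region away from keypoints, and near each keypoint both exploit that the fixed ASL forces the lower-order derivatives of $\vc_p^\top\vx'+d_p$ to vanish exactly while the strictly negative leading derivative from Propositions \ref{prop:connection}, \ref{prop:constrainedarc_end_feasibility}, and \ref{prop:tangentcondition} persists under small perturbation, yielding negativity on a punctured neighborhood of radius fixed before $\varepsilon$. The paper phrases the local step as strict monotone decrease of $f_p'$ rather than a Taylor remainder bound, and organizes the decomposition per constraint into active/inactive intervals rather than by excising keypoint neighborhoods globally, but these are presentational differences only.
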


    Theorem \ref{thm:disturbedtrajectory_feasibility} points out that if the perturbed trajectory is represented by the same ASL to the original trajectory and the perturbance is small enough, then the perturbed trajectory is feasible. In other words, Theorem \ref{thm:disturbedtrajectory_feasibility} provides a novel variational approach under the fixed ASL with a feasibility guarantee near constraints' boundaries.

\subsection{State-Centric Necessary Condition of Optimality}\label{subsec:NecessaryConditionOptimalTrajectory}

    Based on Theorem \ref{thm:disturbedtrajectory_feasibility}, this section provides an approach to constructing a feasible perturbed trajectory with the same ASL. According to Theorem \ref{thm:uniqueness_nochattering}, this section shows that the existence of such a perturbed trajectory implies that the original trajectory is not optimal, resulting in a state-centric necessary for non-chattering optimal control of problem \eqref{eq:optimalproblem}.

    The Jacobian matrix of equality constraints induced by the ASL can be calculated through Proposition \ref{prop:Jacobian}.

    \begin{proposition}\label{prop:Jacobian}
        $\vx=\vx\left(t\right)$ is driven by $\forall i\in\left[M\right]$, $t\in\left(t_{i-1},t_i\right)$, $\dot\vx=\vA_i\vx+\vb_i$, and $\vx\left(t_0\right)=\vx_0$ is fixed. Consider the mapping $\vx_i:\vt=\left(t_i\right)_{i=1}^M\mapsto\vx\left(t_i\right)$. Then, $\forall i,j\in\left[M\right]$,
        \begin{equation}\label{eq:Jacobian}
            \frac{\partial\vx_i}{\partial t_j}=\begin{dcases}
                \vP_{j,i}\left(\left(\vA_{j}-\vA_{j+1}\right)\vx_j+\vb_{j}-\vb_{j+1}\right),\,j<i,\\
                \vA_i\vx_i+\vb_i,\,j=i,\\
                \vzero,\,j>i,
            \end{dcases}
        \end{equation}
        where $\vP_{j,i}=\e^{\vA_i\left(t_i-t_{i-1}\right)}\e^{\vA_{i-1}\left(t_{i-1}-t_{i-2}\right)}\dots\e^{\vA_{j+1}\left(t_{j+1}-t_j\right)}$.

    \end{proposition}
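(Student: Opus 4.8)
The plan is to reduce everything to the one-step propagation formula of Lemma~\ref{lemma:solution_linearODEs} and then differentiate a linear recursion. First I would record, for each arc, the endpoint value $\vx_i=\vx\left(t_i\right)=\e^{\vA_i\left(t_i-t_{i-1}\right)}\vx_{i-1}+\vw_i$ with $\vw_i\triangleq\bigl(\int_{t_{i-1}}^{t_i}\e^{\vA_i\left(t_i-\tau\right)}\mathrm{d}\tau\bigr)\vb_i$, which is exactly \eqref{eq:solution_linearODEs} evaluated at the right endpoint of the arc. Since $\vx_i$ is a finite composition of matrix exponentials and integrals over the open region $t_0<t_1<\dots<t_M$, it is smooth in $\vt$ and all partials exist. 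The case $j>i$ is then immediate: $\vx_i$ is assembled only from $t_0,\dots,t_i$ and the fixed data, hence is independent of $t_j$ for $j>i$, giving $\partial\vx_i/\partial t_j=\vzero$.

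For the diagonal case $j=i$, I would differentiate the recursion in $t_i$, using that $\vx_{i-1}$ does not depend on $t_i$. The exponential contributes $\vA_i\e^{\vA_i\left(t_i-t_{i-1}\right)}\vx_{i-1}$, and the Leibniz rule applied to $\vw_i$ (whose upper limit and integrand both carry $t_i$) yields $\partial\vw_i/\partial t_i=\vb_i+\vA_i\vw_i$. Collecting terms gives $\partial\vx_i/\partial t_i=\vA_i\bigl(\e^{\vA_i\left(t_i-t_{i-1}\right)}\vx_{i-1}+\vw_i\bigr)+\vb_i=\vA_i\vx_i+\vb_i$, as claimed; conceptually this is just the statement that stretching the $i$-th arc drives its endpoint along the flow at velocity $\dot\vx\left(t_i^-\right)$.

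The off-diagonal case $j<i$ I would treat by induction on $i-j$ for fixed $j$, the key observation being that $\e^{\vA_i\left(t_i-t_{i-1}\right)}$ and $\vw_i$ depend on $t_j$ only when $j\in\left\{i-1,i\right\}$. The base step $i=j+1$ differentiates the recursion in $t_{i-1}=t_j$: here $\partial\bigl(\e^{\vA_i\left(t_i-t_{i-1}\right)}\bigr)/\partial t_{i-1}=-\vA_i\e^{\vA_i\left(t_i-t_{i-1}\right)}$, and the Leibniz rule on the lower limit gives $\partial\vw_i/\partial t_{i-1}=-\e^{\vA_i\left(t_i-t_{i-1}\right)}\vb_i$, while $\partial\vx_{i-1}/\partial t_{i-1}=\vA_{i-1}\vx_{i-1}+\vb_{i-1}$ is supplied by the diagonal case already established. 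Summing and factoring out $\e^{\vA_i\left(t_i-t_{i-1}\right)}=\vP_{j,j+1}$ produces exactly $\vP_{j,j+1}\bigl(\left(\vA_j-\vA_{j+1}\right)\vx_j+\vb_j-\vb_{j+1}\bigr)$. For $i>j+1$, neither $\e^{\vA_i\left(t_i-t_{i-1}\right)}$ nor $\vw_i$ depends on $t_j$, so the recursion differentiates cleanly to $\partial\vx_i/\partial t_j=\e^{\vA_i\left(t_i-t_{i-1}\right)}\,\partial\vx_{i-1}/\partial t_j$; unrolling this down to $i=j+1$ left-multiplies the base expression by $\e^{\vA_i\left(t_i-t_{i-1}\right)}\cdots\e^{\vA_{j+2}\left(t_{j+2}-t_{j+1}\right)}$, which furnishes precisely the remaining factors of $\vP_{j,i}$.

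The routine but error-prone part, and the place I would be most careful, is the bookkeeping of the Leibniz differentiations of $\vw_i$ with respect to its upper limit (diagonal case) versus its lower limit ($j=i-1$ case), together with tracking exactly which switching times each factor $\e^{\vA_i\left(t_i-t_{i-1}\right)}$ and each $\vw_i$ depends on. Beyond this, the argument is a mechanical unrolling of a linear recursion, so I expect no genuine conceptual obstacle; the single algebraic fact that makes the diagonal case close, $\vA_i\vw_i+\vb_i=\e^{\vA_i\left(t_i-t_{i-1}\right)}\vb_i$, is itself just the above Leibniz computation read in reverse.
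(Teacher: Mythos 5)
Your proposal is correct and follows essentially the same route as the paper's proof: both differentiate the one-step propagation formula $\vx_i=\e^{\vA_i\left(t_i-t_{i-1}\right)}\vx_{i-1}+\int_{t_{i-1}}^{t_i}\e^{\vA_i\left(t_i-\tau\right)}\vb_i\,\mathrm{d}\tau$ from Lemma~\ref{lemma:solution_linearODEs}, obtain the diagonal case and the $j=i-1$ base case by the Leibniz rule, and then unroll the recursion $\frac{\partial\vx_i}{\partial t_j}=\e^{\vA_i\left(t_i-t_{i-1}\right)}\frac{\partial\vx_{i-1}}{\partial t_j}$ to assemble $\vP_{j,i}$. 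Your version merely spells out the Leibniz bookkeeping (including the identity $\vA_i\vw_i+\vb_i=\e^{\vA_i\left(t_i-t_{i-1}\right)}\vb_i$) that the paper leaves implicit.
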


    Utilizing the notations in Section \ref{subsec:FeasibilityDisturbedTrajectory}, the ASL provides a system of equalities on every keypoints. Denote that $\forall i\in\left[M\right]$, $\vF_i\vx_i+\vg_i=\vzero$ and $\vC_i\vx_i+\vd_i<\vzero$, where $\vx_i=\vx\left(t_i\right)$ and $\vF_i$ has full row rank. For a constrained arc with active constraints $\mathcal{P}$, the equality constraints induced by $\mathcal{P}$ are added in only one end instead of both ends of the arcs since one end satisfies $\mathcal{P}$ implies that the whole arc satisfies $\mathcal{P}$. Note that the additional end-constraints at both ends are eliminated by equality constraints induced by $\mathcal{P}$. Similar processes are applied if $u\equiv\um$ or $u\equiv-\um$ during a constrained arc. $\vF_i\vx_i\left(\vt\right)+\vg_i$ for $i\in\left[M\right]$ induce a function $\vH\left(\vt\right)$. Then, Theorem \ref{thm:Jacobian_necessary_condition} provides a necessary condition of optimality.

    \begin{theorem}[State-Centric Necessary Condition]\label{thm:Jacobian_necessary_condition}
        Assume that the optimal trajectory $\vx=\vx^*\left(t\right)$ of problem \eqref{eq:optimalproblem} satisfies Assumption \ref{assumption:feasible_optimal}. Denote the equality constraints induced by the ASL as $\vH\left(\vt^*\right)=\vzero$, where $\vt^*=\left(t_i^*\right)_{i=1}^M$ is the arriving time of each keypoints and $t_0=0$. Then, $\frac{\partial\vH}{\partial\vt_{1:\left(M-1\right)}}\left(\vt^*\right)$ does not have full row rank.
    \end{theorem}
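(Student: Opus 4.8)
The plan is to argue by contradiction: I assume that $\frac{\partial\vH}{\partial\vt_{1:(M-1)}}(\vt^*)$ has full row rank and construct, for every sufficiently small $\delta>0$, a feasible trajectory carrying the same ASL but with terminal time $t_M^*-\delta<t_M^*=\tf^*$, contradicting optimality. The whole argument hinges on upgrading the algebraic full-rank hypothesis into the geometric statement that the terminal time $t_M$ is a freely decreasable direction along the constraint manifold $\{\vH=\vzero\}$.

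First I would introduce the augmented map $\vG\colon\R^M\to\R^{m+1}$, $\vG(\vt)=(\vH(\vt),t_M)$, where $m$ denotes the number of independent rows of $\vH$. Its Jacobian $D\vG(\vt^*)$ is block structured: the top $m$ rows are $\bigl[\frac{\partial\vH}{\partial\vt_{1:(M-1)}}\,\big|\,\frac{\partial\vH}{\partial t_M}\bigr]$ and the bottom row is $(\vzero,1)$. Since $\frac{\partial\vH}{\partial\vt_{1:(M-1)}}(\vt^*)$ has full row rank $m$, it admits no nontrivial left null vector; hence a relation $\va^\top\frac{\partial\vH}{\partial\vt_{1:(M-1)}}=\vzero$ forces $\va=\vzero$, so the bottom row $(\vzero,1)$ cannot lie in the span of the top rows. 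Therefore $D\vG(\vt^*)$ has rank $m+1$, equal to the dimension of the codomain, and $\vG$ is a submersion at $\vt^*$. By the submersion (rank) theorem its image covers a neighborhood of $\vG(\vt^*)=(\vzero,t_M^*)$, so for each small $\delta>0$ there is $\vt'$ near $\vt^*$ with $\vH(\vt')=\vzero$ and $t_M'=t_M^*-\delta$. The individual Jacobian blocks here are exactly those furnished by Proposition \ref{prop:Jacobian}.

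Next I would verify that $\vt'$ induces an admissible competitor. Shrinking $\delta$ makes $\norm[]{\vt'-\vt^*}$ as small as desired, so the strict ordering $0=t_0<t_1'<\dots<t_M'$ is preserved and the trajectory obtained by integrating the fixed ASL dynamics over the perturbed arc lengths is represented by the same ASL $\switchinglaw'=\switchinglaw$. The identity $\vH(\vt')=\vzero$ enforces precisely the equality content of the ASL, namely the active-constraint and tangent conditions at the interior keypoints together with the boundary conditions $\vx(t_0')=\vx_0$ and $\vx(t_M')=\vxf$; thus $\vt'$ is a genuine same-ASL perturbation. Theorem \ref{thm:disturbedtrajectory_feasibility} then supplies the remaining inequality feasibility, provided $\norm[]{\vt'-\vt^*}$ lies below the threshold $\varepsilon$ it specifies, so the perturbed trajectory is feasible for problem \eqref{eq:optimalproblem}.

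Finally, this trajectory is feasible, runs from $\vx_0$ to $\vxf$, and has terminal time $t_M'=t_M^*-\delta<t_M^*$, contradicting the minimality of $\tf$ for the optimal $\vx^*$; Theorem \ref{thm:uniqueness_nochattering} additionally certifies that the competitor is genuinely distinct rather than a relabeling of $\vx^*$. Hence the full-rank hypothesis is untenable, and $\frac{\partial\vH}{\partial\vt_{1:(M-1)}}(\vt^*)$ cannot have full row rank. I expect the main obstacle to be the third step: certifying that the purely algebraic solution $\vt'$ of $\vH(\vt')=\vzero$ corresponds to an honestly feasible, same-ASL trajectory rather than merely satisfying the equalities. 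This is exactly where Theorem \ref{thm:disturbedtrajectory_feasibility} must be invoked, and where one must confirm that $\vH$ encodes all of, and only, the linearly independent keypoint equalities, so that both the submersion count $m+1\le M$ and the feasibility transfer are valid.
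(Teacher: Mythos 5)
Your proposal is correct and follows essentially the same route as the paper: the paper applies the implicit function theorem to $\vH(\vt)=\vzero$, solving for $M'$ of the first $M-1$ times and then freely decreasing $t_M$, which is exactly the content of your submersion argument on the augmented map $(\vH(\vt),t_M)$, and both then invoke Theorem \ref{thm:disturbedtrajectory_feasibility} to transfer feasibility to the perturbed same-ASL trajectory and contradict optimality. Your packaging via the rank of $D\vG$ is a clean equivalent reformulation rather than a genuinely different argument, and your appeal to Theorem \ref{thm:uniqueness_nochattering} at the end is harmless but not needed, since the strict decrease of the terminal time already yields the contradiction.
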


    \begin{remark}
        In contrast to existing costate-based necessary conditions \cite{jacobson1971new}, Theorem \ref{thm:Jacobian_necessary_condition} requires no information on costates, which significantly simplifies the computation. Compared to existing state-centric necessary conditions \cite{tedone2020hamilton}, Theorem \ref{thm:Jacobian_necessary_condition} is effective in theoretical reasoning and has a lower computational complexity in numerical computation. Some examples are provided in Sections \ref{subsec:chainofintegrators_onlysystembehaviors} and \ref{subsec:chainofintegrators_chattering}.
    \end{remark}

    \begin{remark}
        Theorem \ref{thm:Jacobian_necessary_condition} provides a first-order necessary condition of optimality of problem \eqref{eq:optimalproblem}. In fact, if $\frac{\partial\vH}{\partial\vt_{1:\left(M-1\right)}}\left(\vt^*\right)$ does not have full row rank but $\frac{\partial\vH}{\partial\vt}\left(\vt^*\right)$ has full row rank with $M'<M$, then the solution $\vt^*$ can be seen as a stationary point under the constraint of $\mathcal{L}$. For this case, some high-order necessary conditions can be derived. Consider the implicit function $\vf:\left(t_i\right)_{i\in\Setminus{\left[M\right]}{\mathcal{I}}}\mapsto\left(t_i\right)_{i\in\mathcal{I}}$ induced by $\vH\left(\vt\right)=\vzero$ where $M\in\mathcal{I}$. Then, the function $\left(t_i\right)_{i\in\Setminus{\left[M\right]}{\mathcal{I}}}\mapsto t_M$ should achieve a strictly local minimum at $\left(t_i^*\right)_{i\in\Setminus{\left[M\right]}{\mathcal{I}}}$ according to Theorem \ref{thm:uniqueness_nochattering}.
    \end{remark}

    \begin{remark}
        If a given non-chattering BBS trajectory does not satisfy the necessary condition in Theorem \ref{thm:Jacobian_necessary_condition}, then one can perform numerical optimization of the trajectory based on Theorem \ref{thm:Jacobian_necessary_condition}. Numerical examples are provided in Section \ref{sec:simulation}.

    \end{remark}

\section{Applications in Chain-of-Integrator Systems}\label{sec:Applications}
This section provides some applications of the state-centric necessary condition, i.e., Theorem \ref{thm:Jacobian_necessary_condition}, for the optimal control of problem \eqref{eq:optimalproblem}. For convenience, the chain-of-integrator system with box constraints is considered in this section, where the notations have been introduced in Section \ref{subsec:AugmentedSwitchingLaw}.

Time-optimal control for chain-of-integrator systems with box constraints is an open and challenging problem in the optimal control domain, yet to be resolved. The problem can be summarized as follows:
\begin{IEEEeqnarray}{rl}\label{eq:optimalproblem_chainofintegrators}
    \min\quad& \tf=\int_{0}^{\tf}\mathrm{d}t,\IEEEyesnumber\IEEEyessubnumber*\\
    \st\quad&\dot{x}_k\left(t\right)=x_{k-1}\left(t\right),\,1<k\leq n,\,\forall t\in\left[0,t_\f\right],\\
    &\dot{x}_1\left(t\right)=u\left(t\right),\,\forall t\in\left[0,t_\f\right],\\
    &\vx\left(0\right)=\vx_0,\,\vx\left(\tf\right)=\vx_\f,\\
    &\abs{x_k\left(t\right)}\leq x_{\m k},\,1\leq k\leq n,\,\forall t\in\left[0,t_\f\right],\\
    &\abs{u\left(t\right)}\leq\um,\,\forall t\in\left[0,t_\f\right].
\end{IEEEeqnarray}

This section provides some applications of Theorem \ref{thm:Jacobian_necessary_condition}. Section \ref{subsec:chainofintegrators_purecontrolconstriant} proves a trivial conclusion from a state-centric perspective. Section \ref{subsec:chainofintegrators_onlysystembehaviors} and Section \ref{subsec:chainofintegrators_chattering} prove two corollaries that are challenging to prove by traditional costate-based necessary conditions.

\subsection{The Case where $\forall k\in\left[n\right]$, $x_{\m k}=\infty$}\label{subsec:chainofintegrators_purecontrolconstriant}
    The case where $\forall k\in\left[n\right]$, $x_{\m k}=\infty$ has a well-known conclusion that the optimal control switches no more than $\left(n-1\right)$ times \cite{lee1967foundations}. This section proves the above conclusion based on the proposed Theorem \ref{thm:Jacobian_necessary_condition}.

    \begin{corollary}\label{cor:chainofintegrators_purecontrolconstriant}
        In problem \eqref{eq:optimalproblem_chainofintegrators}, assume that $\forall k\in\left[n\right]$, $x_{\m k}=\infty$. Then, the optimal control switches no more than $\left(n-1\right)$ times.
    \end{corollary}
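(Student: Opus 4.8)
The plan is to apply the state-centric necessary condition of Theorem \ref{thm:Jacobian_necessary_condition} and show that more than $n-1$ switches force the relevant Jacobian to have full row rank, which is a contradiction. First I would settle the structure of the optimal trajectory: with $x_{\m k}=\infty$ for every $k$, no state constraint can ever be active, so by Proposition \ref{prop:unconstrainedarc_nosingular} every arc is unconstrained with $u\equiv\pm\um$, and the sign alternates across each switch. Consequently there are no tangent markers and no nontrivial additional end-constraints, and the only equality data collected by the ASL is the terminal condition $\vx\left(\tf\right)=\vxf$. Writing the (finitely many, by Assumption \ref{assumption:feasible_optimal}) arcs as $\systembehavior_1\dots\systembehavior_N$ with arc-ends $t_1<\dots<t_N=\tf$, the keypoints are exactly these $M=N$ times, so $\vH\colon\R^{N}\to\R^{n}$ is simply $\vH\left(\vt\right)=\vx_N\left(\vt\right)-\vxf$ and $M'=n$.

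Next I would argue by contradiction: suppose the optimal control switches at least $n$ times, i.e.\ $N\ge n+1$, so that $\vt_{1:\left(M-1\right)}=\left(t_1,\dots,t_{N-1}\right)$ has at least $n$ entries and $\partial\vH/\partial\vt_{1:\left(M-1\right)}$ is an $n\times\left(N-1\right)$ matrix. The goal is to show this matrix has rank $n$. Invoking Proposition \ref{prop:Jacobian} with $\vA_i=\vA$ and $\vb_i=u_i\vb$, and using $u_{j+1}=-u_j$, the product $\vP_{j,N}$ telescopes to $\e^{\vA\left(t_N-t_j\right)}$, giving
\begin{equation}
\frac{\partial\vx_N}{\partial t_j}=\vP_{j,N}\bigl(\vb_j-\vb_{j+1}\bigr)=2u_j\,\e^{\vA\left(t_N-t_j\right)}\vb,\quad j\in\left[N-1\right],
\end{equation}
with each scalar $2u_j=\pm2\um\neq0$.

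For the chain of integrators $\vA$ is nilpotent with $\vA^{k}\vb=\ve_{k+1}$, hence $\e^{\vA s}\vb=\left(1,s,\tfrac{s^2}{2!},\dots,\tfrac{s^{n-1}}{\left(n-1\right)!}\right)^\top$. Therefore, up to left-multiplication by the invertible diagonal matrix $\mathrm{diag}\left(1,1,\tfrac1{2!},\dots,\tfrac1{\left(n-1\right)!}\right)$ and nonzero column scalings $2u_j$, the Jacobian is the Vandermonde matrix built from the nodes $s_j=t_N-t_j$. Since $t_1<\dots<t_{N-1}<t_N$, these nodes are pairwise distinct, so any $n$ of the $N-1\ge n$ columns yield a nonsingular Vandermonde block; the Jacobian therefore has full row rank $n$. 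This contradicts Theorem \ref{thm:Jacobian_necessary_condition}, so the optimal control admits at most $n-1$ switches.

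I expect the main obstacle to be the structural bookkeeping rather than the linear algebra: one must verify that a bang--bang switch between two unconstrained arcs contributes no state-space equality constraint to $\vH$ (the junction is governed only by the costate crossing $\vb^\top\vlambda=0$ in Proposition \ref{prop:connection}.\ref{subprop:connection_unconstrained}), so that $M'=n$ and the row count of the Jacobian is exactly $n$. Once this is in place, the collapse of $\partial\vx_N/\partial t_j$ to a scalar multiple of $\e^{\vA\left(t_N-t_j\right)}\vb$, together with its Vandermonde (equivalently, controllability) structure, makes the rank argument immediate.
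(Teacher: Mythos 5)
Your proposal is correct and follows essentially the same route as the paper's proof: both reduce the Jacobian columns via Proposition \ref{prop:Jacobian} to nonzero multiples of $\e^{\vA\left(t_N-t_j\right)}\vb$ at distinct nodes, invoke the Vandermonde structure to get rank $\min\left\{n,N-1\right\}$, and conclude $N-1<n$ from Theorem \ref{thm:Jacobian_necessary_condition}. Your additional bookkeeping (no tangent markers, no end-constraints, $M'=n$) is a more explicit version of what the paper leaves implicit.
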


    Corollary \ref{cor:chainofintegrators_purecontrolconstriant} implies that the switching law is in the form of $\overline{0}\underline{0}\overline{0}\underline{0}\dots$ or $\underline{0}\overline{0}\underline{0}\overline{0}\dots$ with no more than $n$ arcs. The case in Section \ref{subsec:chainofintegrators_purecontrolconstriant} is trivial since no state constraints exist and the costate vector follows a fixed equation that $\forall k\in\left[n-1\right]$, $\dot\lambda_i=-\lambda_{i+1}$ and $\lambda_n\equiv\const$.

    However, when the state constraints are induced, the behavior of the optimal control can be complex. On the one hand, the multiplier $\veta$ in \eqref{eq:dotlambda} can be non-zero during a constrained arc. On the other hand, the junction of $\vlambda$ can occur when a state constraint switches between active and inactive \cite{hartl1995survey}. For this case, the costate analysis can be complex, while the proposed state-centric necessary condition provides a simple and effective approach for analyzing the optimal control.

\subsection{The Case where Only System Behaviors Occur}\label{subsec:chainofintegrators_onlysystembehaviors}
    The case where only system behaviors occur, i.e., the optimal trajectory consists of a finite number of unconstrained arcs and constrained arcs that are not tangent to constraints' boundaries, is widely applied in existing works on trajectory planning \cite{he2020time,berscheid2021jerk,wang2025time}.

    Denote the ASL of the trajectory as $\switchinglaw=\systembehavior_1\systembehavior_2\dots\systembehavior_N\addendconstraint_N$, where $\addendconstraint_N$ is induced by $\vx\left(t_N\right)=\vxf$. $\forall k\in\N$, denote $\abs{\overline{k}}=\abs{\underline{k}}=k$, $\sgn\left(\overline{k}\right)=+1$, and $\sgn\left(\underline{k}\right)=-1$. The non-existence of tangent markers and additional end-constraints in $\switchinglaw$ means that $\forall i\in\left[N\right]$, the arc $\systembehavior_i$ achieves strictly feasibility. An arc $\systembehavior=\left(\widehat{\vA},\widehat{\vb},\vF,\vg,\mathcal{P}\right)$ in $\left[t_0,t_1\right]$ is strictly feasible if $\forall p\not\in\mathcal{P}$, $\vc_p^\top\vx+d_p<0$ holds in $\left[t_0,t_1\right]$.

    \begin{corollary}\label{cor:chainofintegrators_onlysystembehaviors}
        Assume that the ASL consists of system behaviors without tangent markers and additional end-constraints except at $\tf$. Assume that:
        \begin{enumerate}
            \item\label{condition:3_2_switchinglaw_1ji} $\forall 1\leq j<i$, if $\abs{\systembehavior_j}\geq\abs{\systembehavior_i}$, then $\sum_{k=j+1}^{i}\abs{\systembehavior_k}<i-j$.
            \item\label{condition:3_2_switchinglaw_ijN} $\forall i<j\leq N$, if $\abs{\systembehavior_j}\geq\abs{\systembehavior_i}$, then $\sum_{k=i}^{j-1}\abs{\systembehavior_k}<j-i$.
            \item\label{condition:3_2_switchinglaw_ijN_} $\forall i\in\left[N\right]$, if $\abs{\systembehavior_i}>0$ and $\forall i<j\leq N$, $\abs{\systembehavior_j}<\abs{\systembehavior_i}$, then $\sum_{k=i}^{N}\abs{\systembehavior_k}\leq N-i$.
        \end{enumerate}
        Then, it holds that
        \begin{equation}\label{eq:chainofintegrators_onlysystembehaviors}
            N-\sum_{i=1}^{N}\abs{\systembehavior_i}\leq n.
        \end{equation}
    \end{corollary}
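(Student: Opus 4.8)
The plan is to instantiate the state-centric necessary condition of Theorem~\ref{thm:Jacobian_necessary_condition} for this particular ASL and convert the resulting rank deficiency into the counting bound \eqref{eq:chainofintegrators_onlysystembehaviors}. Since $\switchinglaw$ carries no tangent markers and no interior additional end-constraints, the keypoints are exactly the $N$ arc endpoints, so $M=N$ and the free perturbation variables are $\vt_{1:(N-1)}$. First I would enumerate the equality rows collected in $\vH$: the terminal condition $\vx\left(t_N\right)=\vxf$ contributes $n$ rows, and each constrained arc $\systembehavior_i$ contributes the $\abs{\systembehavior_i}$ rows $x_1=\dots=x_{\abs{\systembehavior_i}-1}=0$, $x_{\abs{\systembehavior_i}}=\pm x_{\m\abs{\systembehavior_i}}$, attached to a single endpoint of that arc as prescribed just before Theorem~\ref{thm:Jacobian_necessary_condition}; this gives $M'=n+\sum_{i=1}^{N}\abs{\systembehavior_i}$ rows, whose linear independence is itself part of what the hypotheses must secure. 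The logical skeleton is then short: Theorem~\ref{thm:Jacobian_necessary_condition} forces $\rank\frac{\partial\vH}{\partial\vt_{1:(N-1)}}\le M'-1$, so if I can show this Jacobian attains the maximal rank $\min\left\{M',N-1\right\}$ allowed by its dimensions, then $\min\left\{M',N-1\right\}\le M'-1$ rules out $N-1\ge M'$ and yields $N-1<M'$, i.e.\ $N\le n+\sum_i\abs{\systembehavior_i}$, which is precisely \eqref{eq:chainofintegrators_onlysystembehaviors}.

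The computational heart is to write out $\vJ\triangleq\frac{\partial\vH}{\partial\vt_{1:(N-1)}}$ through Proposition~\ref{prop:Jacobian}. The entry pairing the row of a constraint $\ve_m^\top\vx_i+\const=0$ at keypoint $i$ with the column $t_j$ equals $\ve_m^\top\vP_{j,i}\left(\left(\vA_j-\vA_{j+1}\right)\vx_j+\vb_j-\vb_{j+1}\right)$ for $j<i$, equals $\ve_m^\top\left(\vA_i\vx_i+\vb_i\right)$ for $j=i$, and vanishes for $j>i$. For the chain of integrators every $\vA_i$ is a shift truncated according to the active order, the dynamics jump at each keypoint is a fixed multiple of a single coordinate direction, and $\vP_{j,i}$ acts as a propagator whose coordinates are polynomials in the intervening arc durations. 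In the fully unconstrained case this reduces to the Vandermonde columns $-\Delta u_{j+1}\vphi\left(t_N-t_j\right)$ already exploited in Corollary~\ref{cor:chainofintegrators_purecontrolconstriant}; in general $\vJ$ inherits a staircase (block lower-triangular) layout — a constraint at keypoint $i$ is insensitive to $t_j$ for $j>i$ — superimposed on a generalized-Vandermonde pattern within each block.

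The main obstacle is establishing that this generalized-Vandermonde/triangular matrix has maximal rank, and this is exactly where hypotheses~\ref{condition:3_2_switchinglaw_1ji}--\ref{condition:3_2_switchinglaw_ijN_} enter. Each sum $\sum_k\abs{\systembehavior_k}$ weighed against a gap length $i-j$ or $N-i$ measures whether the constrained arcs between two keypoints are too heavy relative to the number of free times separating them; when they are, several rows are forced into the span of the coordinate flags generated by the intervening shifts, collapsing the rank. I would prove maximality by exhibiting a nonsingular square minor of size $\min\left\{M',N-1\right\}$, via an induction on $N$ that peels off the outermost arc: condition~\ref{condition:3_2_switchinglaw_1ji} controls rows whose keypoint lies to the right of a heavier arc, condition~\ref{condition:3_2_switchinglaw_ijN} treats the symmetric left situation, and condition~\ref{condition:3_2_switchinglaw_ijN_} disposes of the tail of arcs following the last dominant order. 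The delicate point is that the propagators $\vP_{j,i}$ mix coordinates across arcs of differing orders, so the nonvanishing of the minor is not a plain Vandermonde determinant but a structured variant whose leading term survives only under the stated inequalities; checking that these counting inequalities are tight enough to keep that leading term nonzero is the crux of the argument.
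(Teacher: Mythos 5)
Your proposal follows essentially the same route as the paper: instantiate Theorem~\ref{thm:Jacobian_necessary_condition} with the $n+\sum_i\abs{\systembehavior_i}$ equality rows (terminal state plus one block per constrained arc) against the $N-1$ free times, show the resulting generalized-Vandermonde Jacobian attains maximal rank under Conditions~\ref{condition:3_2_switchinglaw_1ji}--\ref{condition:3_2_switchinglaw_ijN_}, and conclude that rank deficiency forces rows $>$ columns, which is exactly \eqref{eq:chainofintegrators_onlysystembehaviors}. Both you and the paper leave the full-rank verification as a sketch (the paper defers it to prior work via iterated row differencing; you propose an induction peeling off the outermost arc), so the one genuinely hard step remains unexecuted in both, but the logical skeleton and the counting argument are identical.
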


    \begin{remark}
        In our previous work \cite{wang2025time}, a similar conclusion pointed out that when fixing $\vxf$, then the set of $\left(\vx_0,\vt\right)$ that satisfies the ASL $\switchinglaw=\systembehavior_1\systembehavior_2\dots\systembehavior_N$ locally forms a submanifold of dimension $r$. Among them, $r=N-\sum_{i=1}^{N}\abs{\systembehavior_i}$. However, the local optimality is not discussed in \cite{wang2025time}. Corollary \ref{cor:chainofintegrators_onlysystembehaviors} proves that the ASL with $r>n$ fails to achieve optimality.
    \end{remark}

    It is challenging to reason Corollary \ref{cor:chainofintegrators_onlysystembehaviors} by existing costate-based necessary conditions \cite{jacobson1971new} due to complex behaviors of the costates. In contrast to Corollary \ref{cor:chainofintegrators_onlysystembehaviors}, PMP-based approaches need to derive the costates for a given trajectory case-by-case.

    For example, in a 4th-order problem, $\mathcal{L}_1=\underline{01}\overline{01}\underline{0}\overline{2}\underline{01}\overline{01}\underline{0}$ can be feasible, but it cannot be optimal since $N-\sum_{i=1}^{N}\abs{\systembehavior_i}=5>4$. $\mathcal{L}_2=\overline{01}\underline{0}\overline{2}\underline{01}\overline{0}\underline{2}\overline{01}\underline{0}$ can be a candidate for optimal control since $N-\sum_{i=1}^{N}\abs{\systembehavior_i}=4$. Though Corollary \ref{cor:chainofintegrators_onlysystembehaviors} is not a sufficient condition for optimal control, it provides a simple and effective approach to analyzing optimal control of problem \eqref{eq:optimalproblem_chainofintegrators}. Numerical examples are provided in Section \ref{sec:simulation}.

\subsection{The Chattering Phenomenon induced by $\abs{x_2}\leq x_{\m2}$}\label{subsec:chainofintegrators_chattering}
    Our previous work \cite{wang2025chattering} points out that if the chattering phenomenon occurs in problem \eqref{eq:optimalproblem_chainofintegrators}, then there exists one and only one inequality state constraint that switches between active and inactive during the chattering phenomenon. An infinite number of unconstrained arcs are joined by constraints' boundaries, i.e., the unconstrained arcs are tangent to the boundary for infinite times. In contrast, constrained arcs do not occur. This section discusses the chattering phenomenon induced by $x_2\leq x_{\m2}$ based on the proposed state-centric necessary condition. \cite{wang2025chattering} provides the following lemma.
    \begin{lemma}\label{lemma:chattering_s2}\cite{wang2025chattering}
        Assume that the chattering phenomenon is induced by $x_2\leq x_{\m2}$ in problem \eqref{eq:optimalproblem_chainofintegrators} in a left neighborhood of $t_\infty$, where $t_\infty$ is the chattering limit point. Then, $\exists\left\{t_{3k}\right\}_{k=0}^\infty\subset\left[t_0,t_\infty\right]$ increasing strictly monotonically and converging to $t_\infty$, s.t. $x_2$ is tangent to $x_{\m2}$ at $t_{3k}$. $\forall k\in\N^*$, $u$ switches at most 2 times during $\left(t_{3k-3},t_{3k}\right)$.
    \end{lemma}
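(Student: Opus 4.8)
The plan is to first fix the geometry at every contact of the trajectory with the boundary $x_2 = x_{\m2}$, and then to analyse the control over one full excursion between two consecutive contacts. Since the chattering is induced by $x_2\le x_{\m2}$, the characterization of \cite{wang2024chattering} ensures only unconstrained (tangent) arcs occur and that $x_2$ meets its boundary at an accumulating sequence of isolated tangent points; I would collect these as $\{t_{3k}\}$, with strict monotone convergence to $t_\infty$ being immediate from the accumulation of infinitely many contacts in a left neighborhood of $t_\infty$. For the local geometry I would apply Proposition \ref{prop:tangentcondition} with $\vc^\top\vx = x_2$: the first derivative along any unconstrained arc is $x_1$ and the second derivative is $\ddot x_2 = \dot x_1 = u\in\{\pm\um\}$, which is nonzero, so the contact order $\hat r$ cannot exceed $2$; being even, it equals exactly $2$. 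Hence at each $t_{3k}$ one has $x_2(t_{3k}) = x_{\m2}$, $x_1(t_{3k}) = 0$, and $\ddot x_2(t_{3k}) = u(t_{3k}) < 0$, so $u \equiv -\um$ on a neighborhood of $t_{3k}$. A sign check rules out a switch exactly at a contact: a switch $-\um\!\to\!+\um$ would keep $x_2$ increasing past $x_{\m2}$, while $+\um\!\to\!-\um$ forces $x_1 < 0$ immediately before $t_{3k}$, contradicting that $x_2$ attains a local maximum there.

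Next I would study a single excursion on $(t_{3k-3},t_{3k})$. By consecutiveness, $x_2 < x_{\m2}$ strictly on the open interval, with $x_1 = 0$ and $x_2 = x_{\m2}$ at both ends; integrating $\dot x_2 = x_1$ gives $\int_{t_{3k-3}}^{t_{3k}} x_1\,\mathrm{d}t = 0$ together with the running inequality $\int_{t_{3k-3}}^{t} x_1\,\mathrm{d}\tau \le 0$. Combining with the first step, $x_1$ leaves $t_{3k-3}$ negative (since $u=-\um$ there) and arrives at $t_{3k}$ from the positive side, so $x_1$ must change sign, which forces at least one $+\um$ arc in between. This yields the minimal pattern $-\um,+\um,-\um$ with the two switches located at the points I label $t_{3k-2}$ and $t_{3k-1}$, matching the indexing of the claim; degenerate excursions only collapse this pattern.

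The substance of the lemma is the upper bound, i.e. excluding three or more switches on one excursion. On every $+\um$ arc $x_2$ is strictly convex and thus cannot touch $x_{\m2}$, so any extra pair of switches would insert an additional concave $-\um$ sub-arc whose local maximum of $x_2$ lies strictly below the boundary. By Bellman's principle the restriction of the optimal trajectory to $[t_{3k-3},t_{3k}]$ is itself time-optimal for its own endpoints under the single active constraint $x_2\le x_{\m2}$. The excursion's only effect on the remaining problem is the increment it produces in the slow states $x_3,\dots,x_n$ while returning $(x_1,x_2)$ to the tangent state $(0,x_{\m2})$, so I would recast the sub-problem as: reach a prescribed change in $(x_3,\dots,x_n)$ in minimal time with $(x_1,x_2)$ returning to $(0,x_{\m2})$ and $x_2\le x_{\m2}$, and show its time-optimal solution is a two-switch $-\um,+\um,-\um$ link. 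Equivalently, one can hypothesize a $\ge 4$-switch excursion, write the keypoint equality constraints induced by its ASL, and verify that the associated Jacobian in Proposition \ref{prop:Jacobian} has full row rank; Theorem \ref{thm:Jacobian_necessary_condition} then refutes optimality.

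The main obstacle I anticipate is precisely this comparison, because the true dynamics are $n$-dimensional and the naive switching-function estimate only caps the count at $n-1$ rather than $2$. The way through is to exploit that the excursion lengths shrink to zero as $t_{3k}\uparrow t_\infty$, so over one cycle the slow states change by $O(t_{3k}-t_{3k-3})$ and the leading-order behaviour is governed by the $(x_1,x_2)$ double-integrator; for that reduced link between prescribed tangent states the two-switch connector is the unique time-optimal motion, and I would need quantitative control of the neglected higher-order drift to upgrade this leading-order statement to an exact bound valid for every $k\in\N^*$.
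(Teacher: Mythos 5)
First, note that the paper does not prove this lemma at all: it is imported verbatim from the authors' prior work \cite{wang2024chattering} (the appendix likewise points to Section IV-B of that reference for the switching pattern \eqref{eq:control_chattering_A}), so there is no in-paper argument to compare yours against. Judged on its own terms, your proposal is sound and essentially complete on the easy half. The tangency-order computation ($\dot x_2=x_1$, $\ddot x_2=u\neq 0$ on an unconstrained arc, hence contact order exactly $2$, $x_1(t_{3k})=0$, and $u=-\um$ in a neighborhood of each contact), the exclusion of a switch at a contact point, and the deduction that each excursion must contain at least one $+\um$ arc and hence at least two switches in the pattern $-\um,+\um,-\um$ are all correct and match what one reconstructs from the appendix (where $\lambda_{1i}$ vanishes at exactly the two interior points $\tfrac34 t_{i-1}+\tfrac14 t_i$ and $\tfrac14 t_{i-1}+\tfrac34 t_i$).

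The genuine gap is the one you name yourself: the upper bound of two switches per excursion, which is the actual content of the lemma. None of the three routes you sketch closes it. The convexity remark only restates that an extra $-\um$ sub-arc has its local maximum of $x_2$ strictly below $x_{\m2}$, which is not by itself a contradiction with feasibility or optimality. The Bellman sub-problem (prescribed increments of $x_3,\dots,x_n$ with $(x_1,x_2)$ returning to $(0,x_{\m2})$) is genuinely $n$-dimensional: on a single excursion there are no costate junctions, so $\lambda_1$ restricted to $(t_{3k-3},t_{3k})$ is a polynomial of degree $n-1$ and the naive PMP count allows up to $n-1$ interior zeros, i.e.\ up to $n-1$ switches for $n\geq 4$ — exactly the obstruction you acknowledge. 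The leading-order double-integrator reduction is only asymptotic in $k$ and, as stated, cannot deliver the bound \emph{for every} $k\in\N^*$; and the Jacobian route via Theorem \ref{thm:Jacobian_necessary_condition} would require actually exhibiting full row rank for the ASL of a hypothetical $\geq 4$-switch excursion, which you do not carry out. Since the lemma is used downstream precisely through the two-switch structure (it fixes the junction times $t_{3k-1},t_{3k-2}$ entering \eqref{eq:Jacobian_chattering_s2} and \eqref{eq:recursive_chattering_s2}), this missing step cannot be waved through; it must either be proved or, as the paper does, explicitly cited from \cite{wang2024chattering}.
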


    \begin{corollary}\label{cor:chainofintegrators_chattering_s2}
        Assume that the chattering phenomenon is induced by $x_2\leq x_{\m2}$ in problem \eqref{eq:optimalproblem} in a left neighborhood of $t_\infty$. Let $\tau_k=t_\infty-t_{3k}$, $\forall k\in\N$. Denote $f_m\left(a,b,c\right)$ as
        \begin{equation}
            \left(b+3a\right)^m-3\left(3b+a\right)^m+3\left(c+3b\right)^m-\left(3c+b\right)^m.
        \end{equation}
        Then, $\forall N\in\N^*$, $\vJ'$ does not have full row rank, where
        \begin{equation}
            \vJ'=\left(f_{i+1}\left(\tau_{j-1},\tau_{j},\tau_{j+1}\right)\right)_{i\in\left[n-2\right],j\in\left[N\right]}.
        \end{equation}
        Furthermore, $\forall k\geq n-2$, $\tau_k$ can be calculated by the following recursive equation:
        \begin{equation}\label{eq:recursive_chattering_s2}
            \det\left(f_{i+1}\left(\tau_{k-j-1},\tau_{k-j},\tau_{k-j+1}\right)\right)_{i\in\left[n-2\right],j\in\left[n-2\right]}=0.
        \end{equation}
    \end{corollary}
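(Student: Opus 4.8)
The plan is to instantiate Theorem \ref{thm:Jacobian_necessary_condition} on a finite sub-arc of the chattering profile and then algebraically collapse the resulting Jacobian into $\vJ'$. By Lemma \ref{lemma:chattering_s2} and Bellman's principle, the sub-arc on $\left[t_0,t_{3N}\right]$ is itself time-optimal, and its ASL is a string of unconstrained arcs $u\equiv\pm\um$ with exactly two switches per cycle $\left(t_{3k-3},t_{3k}\right)$, each cycle ending with the trajectory tangent to $x_2=x_{\m2}$. By Proposition \ref{prop:tangentcondition} (with even order $\hat r=2$) each tangency contributes the two equality constraints $x_2\left(t_{3k}\right)=x_{\m2}$ and $x_1\left(t_{3k}\right)=0$, while the bang-bang switches contribute none. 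First I would assemble the constraint map $\vH$ from these tangency conditions together with the terminal condition $\vx\left(t_{3N}\right)=\vxf$, and differentiate it via Proposition \ref{prop:Jacobian}. In the chain-of-integrator case $\e^{\vA s}\vb=\vphi\left(s\right)$ with $\vphi_m\left(s\right)=s^{m-1}/\left(m-1\right)!$, and at a switch $\left(\vA_j-\vA_{j+1}\right)\vx_j+\vb_j-\vb_{j+1}=-\Delta u_{j+1}\vb$, so every column attached to a switch time is $\mp 2\um\,\vphi\left(t_i-t_j\right)$ and every interior tangent column carries a single $-\um$ on its own $x_1$-row. This reproduces the matrix \eqref{eq:Jacobian_chattering_s2}.

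The reduction then proceeds in stages. First, each interior tangent column $t_{3k}$ ($k<N$) has a unique nonzero entry, namely the $-\um$ in row $x_1\left(t_{3k}\right)$ (its $x_2$-diagonal is $x_1\left(t_{3k}\right)=0$, and it decouples from all later keypoints because the dynamics do not change across a tangency); I would pivot each such column out, deleting it together with its $x_1$-row while preserving the rank deficit. Second, I would solve the within-cycle geometry: integrating $\dot x_1=u$ over a cycle with $x_1=0$ at both tangent endpoints forces the $+\um$ sub-arc to have length equal to the sum of the two $-\um$ sub-arcs, and imposing $\int x_1\,\mathrm{d}t=0$ (so that $x_2$ returns to $x_{\m2}$) forces the two $-\um$ sub-arcs to be equal. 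Hence the two switch times in cycle $k$ satisfy $t_\infty-t_{3k-2}=\left(3\tau_{k-1}+\tau_k\right)/4$ and $t_\infty-t_{3k-1}=\left(\tau_{k-1}+3\tau_k\right)/4$, so that the four switches flanking a tangent point $t_{3j}$ carry exactly the arguments $\tau_j+3\tau_{j-1}$, $3\tau_j+\tau_{j-1}$, $\tau_{j+1}+3\tau_j$, $3\tau_{j+1}+\tau_j$ appearing in $f_m\left(\tau_{j-1},\tau_j,\tau_{j+1}\right)$.

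Third, I would use the surviving constant row $x_1\left(t_{3N}\right)$ (degree $0$ in the switch times) and the affine rows $x_2\left(t_{3k}\right)$ (degree $1$) to eliminate the switch columns down to $N$ columns. This is where the four-term alternating structure of $f_m$ originates: the combination defining $f_m$ is precisely the one annihilating polynomials of degree $\le 1$, since $f_0=1-3+3-1=0$ and $f_1\propto\left(b+3a\right)-3\left(3b+a\right)+3\left(c+3b\right)-\left(3c+b\right)=0$, so it kills exactly the degree-$0$ and degree-$1$ rows and leaves the higher-order terminal rows $x_m$ ($m=3,\dots,n$), whose degree-$\left(m-1\right)$ entries become $f_{m-1}\left(\tau_{j-1},\tau_j,\tau_{j+1}\right)$ up to nonzero scalings. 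Tracking these pivots as genuine row/column eliminations yields $\partial\vH/\partial\vt$ of full row rank if and only if $\vJ'$ is, and then Theorem \ref{thm:Jacobian_necessary_condition} applied to the optimal sub-arc gives that $\vJ'$ is not of full row rank for every $N$, which is the first assertion. For the recursion, not-full-row-rank for the $\left(n-2\right)\times N$ matrix means $\rank\vJ'\le n-3$, so every $\left(n-2\right)\times\left(n-2\right)$ minor vanishes; selecting the $n-2$ consecutive columns indexed by $\tau_{k-n+1},\dots,\tau_k$ yields exactly $\det\left(f_{i+1}\left(\tau_{k-j-1},\tau_{k-j},\tau_{k-j+1}\right)\right)_{i,j\in\left[n-2\right]}=0$ of \eqref{eq:recursive_chattering_s2}, which determines $\tau_k$ from the preceding values once the indices are in range.

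The hard part will be the third stage: carrying out the elimination of the $2N$ switch columns to $N$ columns using only the single constant row and the affine rows, while correctly tracking the alternating $\pm 2\um$ signs attached to first and second switches, and verifying that the residual on each terminal row is exactly the four-term combination $f_{m-1}$ rather than some other degree-$\le 1$ annihilator. Establishing the genuine rank equivalence across these non-square eliminations, together with the boundary columns $j=1$ and $j=N$ where a flanking cycle is replaced by the initial data or the terminal constraint, is the delicate accounting on which the whole argument rests.
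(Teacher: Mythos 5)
Your proposal follows essentially the same route as the paper's proof: derive the within-cycle switch times $t_{3k-2}=\frac{t_{3k}+3t_{3k-3}}{4}$, $t_{3k-1}=\frac{3t_{3k}+t_{3k-3}}{4}$, assemble the Jacobian of the ASL constraints ($\vx_{3k,1:2}=x_{\m2}\ve_2$ at interior tangencies plus the terminal condition) via Proposition~\ref{prop:Jacobian}, reduce it to $\vJ'$ by row/column transformations, and invoke Theorem~\ref{thm:Jacobian_necessary_condition} on the optimal sub-arc, with the recursion coming from vanishing $\left(n-2\right)\times\left(n-2\right)$ minors of consecutive columns. The elimination bookkeeping you flag as delicate is exactly the step the paper itself compresses into ``basic row and column transformations,'' and your identification of $f_0=f_1=0$ as the mechanism annihilating the degree-$\leq 1$ rows is the correct key observation, so your account is if anything more explicit than the paper's.
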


    \begin{remark}
        According to Corollary \ref{cor:chainofintegrators_chattering_s2}, the chattering phenomenon can be induced by $x_2\leq x_{\m2}$ only if $\lim_{k\to\infty}\tau_k=0$ and $\tau_k>0$ under the recursive equation \eqref{eq:recursive_chattering_s2}. It is evident that chattering does not occur when $n=3$ since $\tau_k=2\tau_{k-1}-\tau_{k-2}$ converges to $\infty$. Through more refined derivation, it can be rigorously proved that $n=4$ does not hold. In future works, one can try to determine the existence of chattering in problem \eqref{eq:optimalproblem_chainofintegrators} of order $n\geq5$ under constraints on $x_2$, where Corollary \ref{cor:chainofintegrators_chattering_s2} and Theorem \ref{thm:Jacobian_necessary_condition} would serve as useful mathematical tools.

        However, it is challenging to obtain the recursive equation \eqref{eq:recursive_chattering_s2} for problem \eqref{eq:optimalproblem_chainofintegrators} based on the traditional PMP-based necessary condition \cite{jacobson1971new} since the behaviors of costates are significantly complex for high-order problems, let alone determining the existence of chattering. A comparison between state-centric and PMP-based approaches is provided in Appendix \ref{app:comparison}.
    \end{remark}

    Although Theorem \ref{thm:Jacobian_necessary_condition} holds under the assumption of the non-existence of the chattering phenomenon, Corollary \ref{cor:chainofintegrators_chattering_s2} provides an example to apply the proposed necessary condition to a chattering trajectory, where a sub-arc without chattering of the optimal trajectory is investigated.

\section{Numerical Experiments}\label{sec:simulation}

Numerical experiments for high-order chain-of-integrator systems \eqref{eq:optimalproblem_chainofintegrators} are conducted to provide a potential way to apply the proposed state-centric necessary condition to optimizing trajectories. Since this paper focuses on the theoretical necessary condition instead of optimization algorithms, metrics like computational time are not considered. Note that this paper does not provide a manual optimization algorithm.

\subsection{Representing a Feasible BBS Trajectory}\label{subsec:simulation_3_order}

Consider problem \eqref{eq:optimalproblem} whose system matrix is non-diagonalizable with multiple Jordan blocks as follows:
\begin{equation}\label{eq:complex_example}
    \vA=\left[\begin{array}{cccc}
        -1&1&0&0\\
        0&-1&1&0\\
        0&0&-1&0\\
        0&0&0&0
    \end{array}\right],\,\vb=\left[\begin{array}{c}
        0\\0\\2\\-1
    \end{array}\right].
\end{equation}
The constraints are $\abs{u}\leq1$, $x_1\geq-0.7$, and $x_3+x_4\leq0.5$. As defined in Proposition \ref{prop:constrainedarc_control}, $x_1\geq-0.7$ is a 3rd-order state constraint. As shown in Figs. \ref{fig:complex_example}(a-c), a feasible BBS trajectory $\vx_{\text{ASL}}$ consists of full features defined in this paper. The ASL is $\switchinglaw=\systembehavior_1\systembehavior_2\systembehavior_3\systembehavior_4\left(\systembehavior_5,\left\{\tangentmarker_5^{(1)}\right\}\right)\systembehavior_6\addendconstraint_6\systembehavior_7\systembehavior_8$. Specifically, $\systembehavior_1$, $\systembehavior_4$, and $\systembehavior_8$ are unconstrained arcs where $u\equiv-1$. $\systembehavior_2$, $\systembehavior_5$, and $\systembehavior_7$ are unconstrained arcs where $u\equiv+1$. $\systembehavior_3$ and $\systembehavior_6$ are constrained arcs where $x_3+x_4\equiv0.5$. $\tangentmarker_5^{(1)}$ is a tangent marker where $x_1$ is tangent to $-0.7$. $\addendconstraint_6$ is an additional end-constraint requiring $u\left(t_6^-\right)=1$. In other words, the control $u$ is continuous at the connection of the constrained arc $\systembehavior_6$ and the unconstrained arc $\systembehavior_7$. In contrast, $u$ is not continuous at the two ends of another constrained arc $\systembehavior_3$. One can check that Theorem \ref{thm:Jacobian_necessary_condition} holds for $\vx_{\text{ASL}}$.

A numerical optimal control toolbox, i.e., Yop \cite{leek2016optimal}, is applied to solving problem \eqref{eq:complex_example}. The number of intervals is set to 200. The resulting trajectory without a specified initial solution $\vx_{\text{Yop1}}$ is significantly far from optimal, as shown in Fig. \ref{fig:complex_example}. If setting $\vx_{\text{ASL}}$ as the initial solution, then a near-optimal solution $\vx_{\text{Yop2}}$ is obtained. The terminal times of $\vx_{\text{ASL}}$, $\vx_{\text{Yop1}}$, and $\vx_{\text{Yop2}}$ are 5.541, 6.896, and 5.624, respectively.

Furthermore, costates can be reconstructed from states $\vx_{\text{ASL}}$ through meticulous derivations, as shown in Fig. \ref{fig:complex_example}(d). The information on costates also verifies the proposed state-centric necessary condition in a way, noting that PMP \eqref{eq:bang_singular_bang_law} holds in this example. In contrast, the costates of $\vx_{\text{Yop1}}$ and $\vx_{\text{Yop2}}$ do not exist since Yop does not solve a problem based on costates.

\begin{figure}[!t]
    \centering
    \includegraphics[width=\columnwidth]{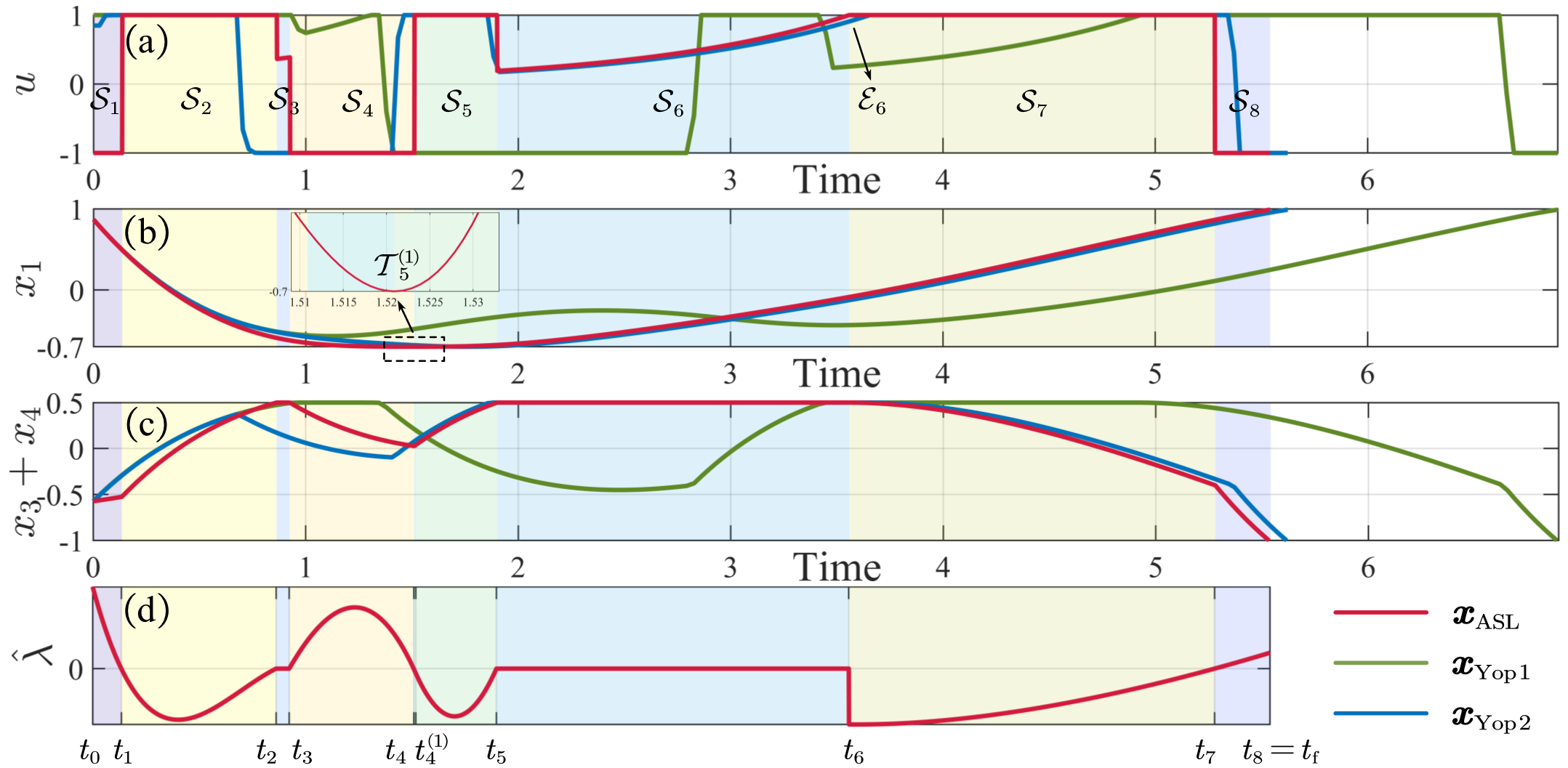}
    \caption{A feasible BBS trajectory with system behaviors, tangent markers, and additional end-constraints. In (d), $\hat{\lambda}\triangleq\left(2\lambda_3-\lambda_4\right)\xi\left(t\right)$ where $\xi\left(t\right)>0$ serves as a scaling factor to enhance the visibility of $\sgn\left(2\lambda_3-\lambda_4\right)$.}
    \label{fig:complex_example}
\end{figure}

\subsection{Determining the Optimality of a 4th-Order Trajectory}\label{subsec:simulation_4_order}

A near-optimal trajectory planning method is proposed in our previous work \cite{wang2025time}. Consider a 4th-order trajectory for high-order chain-of-integrator systems \eqref{eq:optimalproblem_chainofintegrators} provided in \cite{wang2025time} where $\vx_0=\left(0.75,-0.375,2,9\right)$, $\vxf=\left(0.25,0.5,-2,-5\right)$, $\vx_\m=\left(1,1.5,4,20\right)$, and $\um=1$, as shown in Fig. \ref{fig:simulation_disturb}(b). The ASL of the original trajectory $\vx\left(t\right)$ is $\switchinglaw=\underline{01}\overline{0}\underline{2}\overline{01}\underline{0}\overline{01}\underline{0}\overline{0}$. Note that $N-\sum_{i=1}^{N}\abs{\systembehavior_i}=6>4$; hence, Corollary \ref{cor:chainofintegrators_onlysystembehaviors} implies that the terminal time $\tf=9.8604$ is not optimal. Let $\vH$ be the equality constraints induced by $\mathcal{L}$. Note that $\rank\frac{\partial\vH}{\partial\vt}=9$ and $\frac{\partial\vH}{\partial\vt}\in\R^{9\times11}$ except the 4th and 11th columns has full row rank. Then, perturb $t_4$ to $t_4'$ and search for the minimum terminal time $t_\f'$ subject to the constraints of \eqref{eq:optimalproblem_chainofintegrators}. As shown in Fig. \ref{fig:simulation_disturb}(a), $t_\f'$ achieves its local minimum $t_\f-0.0061$ at $t_4'=t_4+0.0208$, resulting in the optimized trajectory $\vx'$. It can be observed in Fig. \ref{fig:simulation_disturb}(b2) that $x_3'$ is tangent to $-x_{\m3}$, while $x_3>-x_{\m3}$ holds in the original trajectory. Hence, the original trajectory fails to fully utilize the feasible set, and the extreme of $\tf'$ activates a new constraint. Specifically, $\vx'$ achieves a 0.06\% reduction in the terminal time compared to $\vx$. Therefore, the proposed state-centric necessary condition can sensitively detect the non-optimality of a given trajectory based on state information.

\begin{figure}[!t]
    \centering
    \includegraphics[width=\columnwidth]{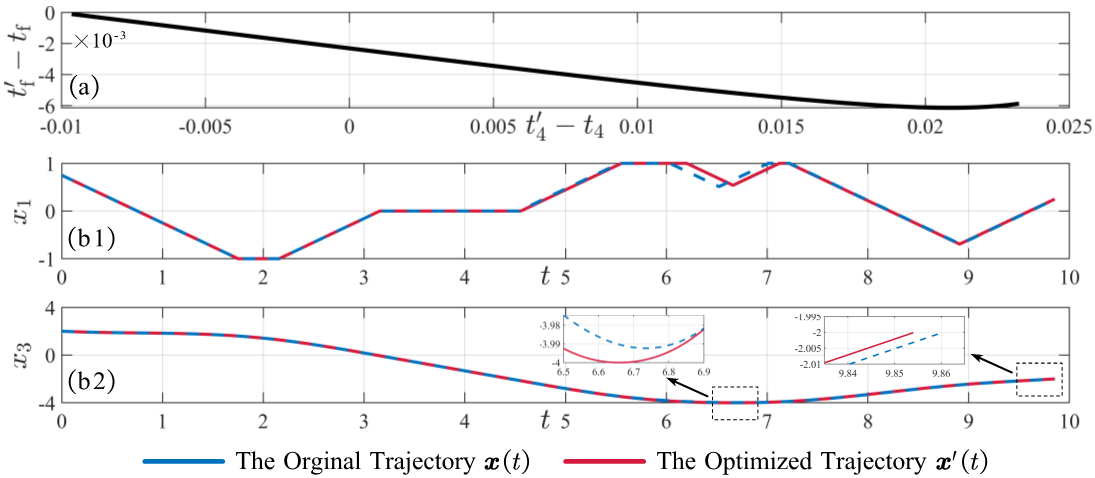}
    \caption{Optimizing a 4th-order near-optimal trajectory. (a) Plot of the locally minimal $t_\f'-t_\f$ and $t_4'-t_4$. (b) The original and the optimized trajectories.}
    \label{fig:simulation_disturb}
\end{figure}

\subsection{Optimizing a 5th-Order Feasible BBS Trajectory}\label{subsec:simulation_5_order}

Arc-based \cite{ezair2014planning} and discretized optimization \cite{leomanni2022time} methods are two main approaches to solving high-order problems. Consider a 5th-order problem for high-order chain-of-integrator systems \eqref{eq:optimalproblem_chainofintegrators} where $\vx_0=\vzero$, $\vxf=\ve_5$, $u_{\mathrm{m}}=1$, and $\vx_{\mathrm{m}}=\left(0.8,0.5,0.5,0.5,1\right)$. A feasible BBS trajectory $\vx_1$ is planned by \cite{ezair2014planning}, as shown in Fig. \ref{fig:ASL_iter_5order}(a). Denote $\mathrm{DOF}\triangleq N-\sum_{i=1}^{N}\abs{\systembehavior_i}-n$; hence, Corollary \ref{cor:chainofintegrators_onlysystembehaviors} implies that $\mathrm{DOF}\leq0$ holds for an optimal trajectory. As shown in Fig. \ref{fig:ASL_iter_5order}(b), $\mathrm{DOF}_1=6>0$ which does not satisfy the proposed state-centric necessary condition. Fix the first 5 arcs as well as the ASL, and vary the motion time of the 6th arc to search the minimal terminal time until reaching the boundary of the feasible set. If some arcs reach zero time, then delete the system behavior from the ASL.  Repeat the above process, resulting in trajectories $\vx_i$, $i\in\left[6\right]$. As shown in Figs. \ref{fig:ASL_iter_5order}(a) and \ref{fig:ASL_SOCPs_5order}(b), the terminal time decreases through iteration, and the DOF finally reaches 0 after 5 iterative steps. Finally, the optimized trajectory $\vx_6$ achieves a 17.7\% reduction in the terminal time compared to the original trajectory $\vx_1$. In fact, it remains challenging to plan near-optimal high-order trajectories using arc-based methods despite advantages in computational accuracy and analytical simplicity. The proposed state-centric condition has the potential to optimize BBS trajectories planned by arc-based methods, which will be investigated in future works.

\begin{figure}[!t]
    \centering
    \includegraphics[width=\columnwidth]{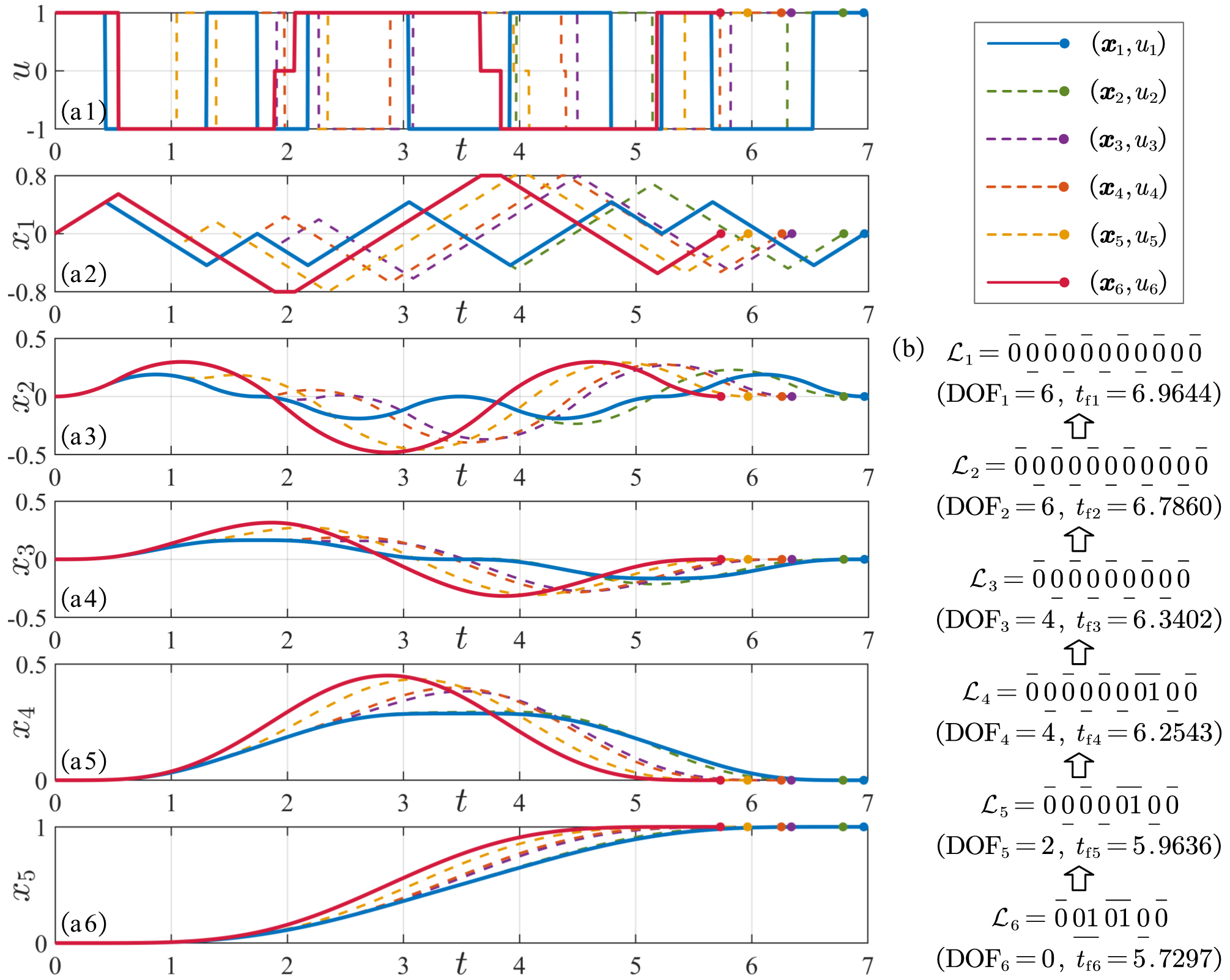}
    \caption{Iteration of a 5th-order feasible BBS trajectory. The initial trajectory $\vx_1$ is planned by \cite{ezair2014planning}. (a) The states and controls. (b) The ASLs.}
    \label{fig:ASL_iter_5order}
\end{figure}

A discretized optimization method \cite{leomanni2022time} is applied to plan the trajectory $\vx_{\text{base}}$ for comparison, where the time domain is discretized into 1000 intervals. An optimal solution is obtained with a 0.016\% relative error of terminal time compared to $\vx_6$, as shown in Fig. \ref{fig:ASL_SOCPs_5order}(b). Therefore, \cite{leomanni2022time} can plan optimal or near-optimal trajectories for the high-order problem. However, $u_{\text{base}}$ exhibits oscillation, as shown in Fig. \ref{fig:ASL_SOCPs_5order}(a1), while $u_i$ is BBS. Defining the total variation of the control as $T_\mathrm{v}\triangleq\int_{0}^{\tf}\abs{u\left(t\right)}\mathrm{d}t$ to describe the trajectory stability, $\vx_5$ achieves an 8.9\% $T_\mathrm{v}$ of $\vx_\text{base}$. For discretized optimization like \cite{leomanni2022time}, the open-loop errors of terminal states are difficult to constrain due to numerical error in each interval. Define $E_\mathrm{s}=\sqrt{\sum_{k=1}^{n}\left(\frac{\hat{x}_{\f k}-x_{\f k}}{x_{\mathrm{m}k}}\right)^2}$ where $\hat\vx_\f=\left(\hat{x}_{\f k}\right)_{k=1}^n$ is directly integrated from the control input. As shown in Fig. \ref{fig:ASL_SOCPs_5order}(d), arc-based trajectories $\vx_i$, $i\in\left[6\right]$, reduce at least 5 orders of magnitude of $E_\mathrm{s}$ compared to $\vx_{\text{base}}$. Hence, an approach combining arc-based methods and the proposed state-centric necessary condition has the potential to plan near-optimal trajectories with higher trajectory stability and computational accuracy compared to discretized optimization.

\section{Conclusion}
    This paper has set out to establish an innovative theoretical framework for time-optimal control of controllable linear systems with a single input. The proposed augmented switching law (ASL) represents the input control and the feasibility of a bang-bang and singular (BBS) trajectory in a compact form. Specifically, the equality and inequality constraints induced by the ASL represent a sufficient and necessary condition for the trajectory' feasibility. Based on the ASL, a given feasible trajectory can be perturbed with feasibility guarantees, resulting in a state-centric necessary condition for time optimality since any perturbed feasible trajectory should have a strictly longer terminal time than the optimal one. The proposed necessary condition states that the Jacobian matrix induced by the ASL must not be of full row rank. In contrast to traditional costate-based conditions, the developed necessary condition requires only states without dependence on costate information.

    The proposed state-centric necessary condition is applied to the optimal control for chain-of-integrator systems with full box constraints from both theoretical and numerical perspectives. An upper bound of the number of arcs is determined in a general sense, and a recursive equation for the junction time in chattering induced by 2nd-order constraints is derived. Note that the above two conclusions are challenging to prove by costate-based conditions. Numerical experiments showed that the proposed state-centric necessary condition can sensitively detect the non-optimality of a given trajectory based on state information, and it has the potential to optimize feasible BBS trajectories with higher trajectory stability and computational accuracy compared to discretized optimization methods.

    \begin{figure}[!t]
        \centering
        \includegraphics[width=\columnwidth]{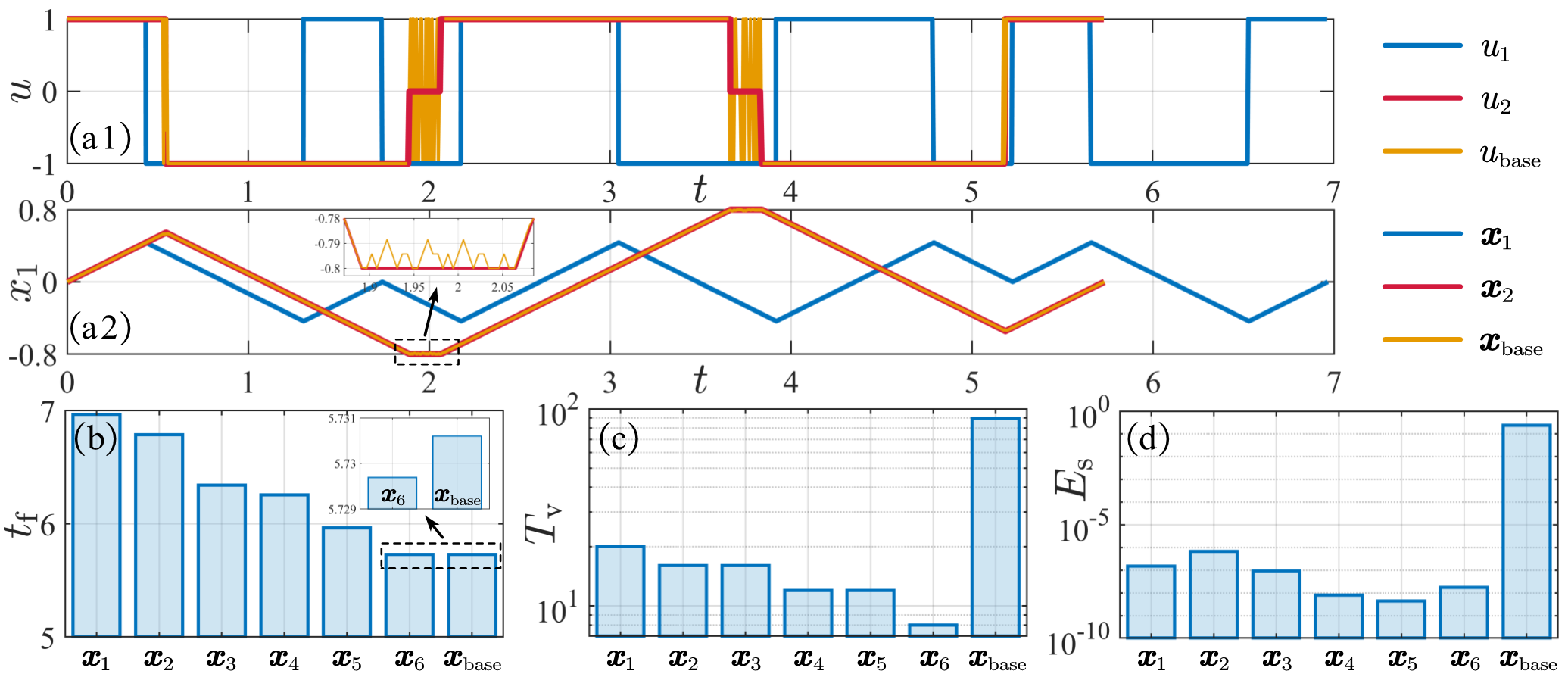}
        \caption{Comparison of the original trajectory $\vx_1$ \cite{ezair2014planning}, the optimized trajectory $\vx_6$ based-on the proposed state-centric necessary condition, and the trajectory $\vx_{\text{base}}$ planned by \cite{leomanni2022time}. (a) The states and controls. (b) The terminal time. (c) The total variation of the control. (d) The open-loop error of terminal states.}
        \label{fig:ASL_SOCPs_5order}
    \end{figure}

\section*{Acknowledgment}
    The authors would like to thank the reviewers for their valuable comments. This work was supported in part by the National Key Research and Development Program of China under Grant 2023YFB4302003, and in part by the National Natural Science Foundation of China under Grant 624B2077.

\ifCLASSOPTIONcaptionsoff
  \newpage
\fi

\bibliographystyle{myIEEEtran}
\bibliography{IEEEabrv,refs/ref}

\newpage

\onecolumn

\appendices

\section{Proofs of Propositions and Theorems}\label{sec:proofs}

\subsection{Proofs in Section \ref{sec:ArcAnalysis}}

\begin{proof}[Proof of Proposition \ref{prop:unconstrainedarc_nosingular}]
    Assume that a singular condition holds in the unconstrained arc, i.e., $\vb^\top\vlambda\equiv0$ for a period. By \eqref{eq:eta_constraint_zero}, $\vC\vx+\vd<\vzero$ implies that $\veta\equiv\vzero$ holds a.e. Hence, \eqref{eq:dotlambda} implies that $\dot\vlambda=-\vA^\top\vlambda$. Therefore,
    \begin{equation}
        \forall k\in\N,\,\vlambda^\top\vA^k\vb=\frac{\mathrm{d}^k}{\mathrm{d}t^k}\left(\vlambda^\top\vb\right)\equiv0,
    \end{equation}
    i.e., $\vlambda^\top\left[\vb,\vA\vb,\vA^2\vb,\dots,\vA^{n-1}\vb\right]\equiv\vzero$. However, by \eqref{eq:controllable}, $\vlambda\equiv0$ holds a.e. Then, \eqref{eq:hamilton_equiv_0} implies that $\lambda_0=0$, which contradicts $\left(\lambda_0,\vlambda\right)\not=0$. Therefore, $\vb^\top\vlambda\not\equiv0$ holds a.e.
\end{proof}

\begin{proof}[Proof of Proposition \ref{prop:constrainedarc_control}]
    Since $\vc_p\not=\vzero$, the controllability condition \eqref{eq:controllable} implies $\vc_p^\top\left[\vb,\vA\vb,\vA^2\vb,\dots,\vA^{n-1}\vb\right]\not=\vzero$; hence, $r_p$ exists and serves as the minimum non-zero column index of a non-zero matrix. For $r\in\left[r_p-1\right]$, considering the $r$th-order derivative of $\vc_p^\top\vx+d_p\equiv0$, \eqref{eq:constrainedarc_highorder_constraint} holds, and $\vc_p^\top\vA^{r_p-1}\left(\vA\vx+\vb u\right)=0$. Hence, \eqref{eq:constrainedarc_control} holds.

    Assume that $\exists t_0\in\left[t_1,t_2\right]$, s.t. $\vx\left(t_{0}\right)$ satisfies \eqref{eq:constrainedarc_highorder_constraint}. Under the driving of \eqref{eq:constrainedarc_control}, note that $\frac{\mathrm{d}^{r_p}}{\mathrm{d}t^{r_p}}\left(\vc_p^\top\vx+d_p\right)=\vc_p^\top\vA^{r_p-1}\left(\vA+\vb\widehat{\va}_p^\top\right)\vx=0$; hence, \eqref{eq:constrainedarc_highorder_constraint} holds in $\left[t_1,t_2\right]$.
\end{proof}

\begin{proof}[Proof of Lemma \ref{lemma:solution_linearODEs}]
    The proof of \eqref{eq:solution_linearODEs} is elementary \cite{stein2009real}. Assume that $\forall r\in\left[n\right]$, $\frac{\mathrm{d}^{r}\left(\vc^\top\vx\right)}{\mathrm{d}t^{r}}\left(t_0\right)=0$. Considering the minimal polynomial of $\vA$, $\exists p^*$, s.t. $p^*$ is a polynomial of degree less than $n$ and $\vA^n=p^*\left(\vA\right)$. Therefore, $\forall r\geq n$, $\exists p_{r}^*$ s.t. $p_{r}^*$ is a polynomial of degree less than $n$ and $\vA^r=p_{r}^*\left(\vA\right)$.

    Note that $\forall r\in\N^*$, $\frac{\mathrm{d}^{r}\left(\vc^\top\vx\right)}{\mathrm{d}t^{r}}=\vc^\top\vA^{r-1}\left(\vA\vx+\vb\right)$. Since $\forall r\in\left[n\right]$, $\frac{\mathrm{d}^{r}\left(\vc^\top\vx\right)}{\mathrm{d}t^{r}}\left(t_0\right)=0$, Note that $\forall r\geq n+1$,
    \begin{equation}\label{eq:cx0_IVP}
        \frac{\mathrm{d}^{r}\left(\vc^\top\vx\right)}{\mathrm{d}t^{r}}\left(t_0\right)=\vc^\top p_{r-1}^*\left(\vA\right)\left(\vA\vx\left(t_0\right)+\vb\right)=0.
    \end{equation}
    It can be proved that $\vc^\top\vx\left(t\right)$ is analytic. By \eqref{eq:cx0_IVP}, $\forall t\in\R$,
    \begin{equation}
        \vc^\top\vx\left(t\right)=\sum_{r=0}^{\infty}\frac{\left(t-t_0\right)^r}{r!}\where{\frac{\mathrm{d}^{r}}{\mathrm{d}t^{r}}\left(\vc^\top\vx\right)}{t_0}\equiv\vc^\top\vx\left(t_0\right).
    \end{equation}
    In other words, $\vc^\top\vx\left(t\right)\equiv\const$ on $\R$.
\end{proof}

\begin{proof}[Proof of Proposition \ref{prop:consistent_constraint}]
    Assume that $\forall p\in\mathcal{P}$, $\vc_p^\top\vx+d_p\equiv0$ holds in the constrained arc. By Proposition \ref{prop:constrainedarc_control}, \eqref{eq:constrainedarc_control} holds $\forall p\in\mathcal{P}$. $\forall q\in\mathcal{P}$, since $\vc_q^\top\vx\equiv\const$, \eqref{eq:constrainedarc_highorder_constraint_multiple} can be derived through considering the $r$th-order derivative of $\vc_q^\top\vx$ for $r\in\left[n\right]$.

    Assume that $\exists t_0\in\left[t_1,t_2\right]$, s.t. $\vx\left(t_0\right)$ satisfies \eqref{eq:constrainedarc_highorder_constraint_multiple}. $\exists p\in\mathcal{P}$, \eqref{eq:constrainedarc_highorder_constraint_multiple} holds in $\left[t_1,t_2\right]$. Then, $\forall q\in\mathcal{P}$, $r\in\left[n\right]$, $\frac{\mathrm{d}^{r}\left(\vc_q^\top\vx\right)}{\mathrm{d}t^{r}}\left(t_0\right)=0$ holds. By Lemma \ref{lemma:solution_linearODEs}, $\vc_q^\top\vx\equiv-d_q$ holds.
\end{proof}

\begin{proof}[Proof of Proposition \ref{prop:connection}]
    Consider the connection between two unconstrained arcs. By Proposition \ref{prop:unconstrainedarc_nosingular}, assume that $\forall i=1,2$, $u\equiv u_i$ on $\systembehavior_i$, where $u_i\in\left\{\pm\um\right\}$. Then, $\widehat{\vb}_i=u_i\vb$. Since $\widehat{\vA}_1=\widehat{\vA}_2=\vA$ and $\mathcal{P}_1=\mathcal{P}_2=\varnothing$, it holds that $\widehat{\vb}_1\not=\widehat{\vb}_2$; hence, $u_1=-u_2$. Therefore, Proposition \ref{prop:connection}.\ref{subprop:connection_unconstrained} holds.

    Consider the connection between two constrained arcs. Assume that $\mathcal{P}_1\cap\mathcal{P}_2\not=\varnothing$. Arbitrarily take a $p^*\in\mathcal{P}_1\cap\mathcal{P}_2$ into consideration. According to Proposition \ref{prop:constrainedarc_control}, $\forall t\in\left[t_0,t_2\right]$, $\dot\vx=\left(\vA-\vb\widehat{\va}_{p^*}^\top\right)\vx$ holds. Evidently, $\forall p\in\mathcal{P}_1\cup\mathcal{P}_2$, $\vc_p^\top\vx\left(t_1\right)+d_p\equiv0$, and $\forall r\in\left[r_p-1\right]$, $\vc_p^\top\vA^{r}\vx\left(t_1\right)\equiv0$. By Proposition \ref{prop:constrainedarc_control}, $\forall t\in\left[t_0,t_2\right]$, $\vc_p^\top\vx+d_p\equiv0$ holds. Therefore, $\mathcal{P}_1=\mathcal{P}_2$, which contradicts $\systembehavior_1\not=\systembehavior_2$. Hence, $\mathcal{P}_1\cap\mathcal{P}_2=\varnothing$.

    Furthermore, consider the monotonicity of $\vc_p^\top\vx+\vd_p$ at $t_1$ to guarantee the feasibility of $\systembehavior_1$ and $\systembehavior_2$. $\exists\varepsilon>0$, s.t. $\forall p\in\mathcal{P}_1$, $t\in\left(t_1,t_1+\varepsilon\right)$, $\vc_p^\top\vx\left(t\right)+\vd_p<0$. Note that on $\systembehavior_2$, $\dot\vx=\widehat{\vA}_2\vx$ and $\forall r\in\N^*$, $\where{\frac{\mathrm{d}^{r}}{\mathrm{d}t^{r}}\left(\vc_p^\top\vx+d_p\right)}{t_1^+}=\vc_p^\top\widehat{\vA}_2^r\vx\left(t_1\right)$. Lemma \ref{lemma:solution_linearODEs} implies that $\exists \hat{r}_p\in\left[n\right]$, s.t. $\where{\frac{\mathrm{d}^{\hat{r}_p}}{\mathrm{d}t^{\hat{r}_p}}\left(\vc_p^\top\vx+d_p\right)}{t_1^+}<0$ and $\forall r\in\left[\hat{r}_p-1\right]$, $\where{\frac{\mathrm{d}^{r}}{\mathrm{d}t^{r}}\left(\vc_p^\top\vx+d_p\right)}{t_1^+}=0$. Since $\forall r\in\left[r_p-1\right]$, $\where{\frac{\mathrm{d}^r}{\mathrm{d}t^r}\left(\vc_p^\top\vx+d_p\right)}{t_1^+}=0$, it holds that $\hat{r}_p\geq r_p$. In other words, \eqref{eq:connection_constrained_s1tos2} holds. For the same reason, \eqref{eq:connection_constrained_s2tos1} holds.

    Conversely, assume that $\dot\vx=\widehat{\vA}_1\vx$ for $\left(t_0,t_1\right)$ and $\dot\vx=\widehat{\vA}_2\vx$ for $\left(t_1,t_2\right)$. Assume that $\vx\left(t_1\right)$ satisfies \eqref{eq:connection_constrained_s1tos2} for $p\in\mathcal{P}_1$, \eqref{eq:connection_constrained_s2tos1} for $p\in\mathcal{P}_2$, and \eqref{eq:constrainedarc_highorder_constraint} for $p\in\mathcal{P}_1\cup\mathcal{P}_2$. Then, $\forall p\in\mathcal{P}_1$, $\vc_p^\top\vx\left(t_1\right)+d_p=0$ and
    \begin{equation}
        \begin{dcases}
            \where{\frac{\mathrm{d}^{\hat{r}_p}}{\mathrm{d}t^{\hat{r}_p}}\left(\vc_p^\top\vx+d_p\right)}{t_1^+}<0,\\
            \forall r\in\left[\hat{r}_p-1\right],\,\where{\frac{\mathrm{d}^r}{\mathrm{d}t^r}\left(\vc_p^\top\vx+d_p\right)}{t_1^+}=0.
        \end{dcases}
    \end{equation}
    Hence, $\exists\varepsilon>0$, $\forall p\in\mathcal{P}_1$, $t\in\left(t_1,t_1+\varepsilon\right)$, $\vc_p^\top\vx\left(t\right)+d_p<0$. For the same reason, $\exists\varepsilon'\in\left(0,\varepsilon\right)$, $\forall p\in\mathcal{P}_2$, $t\in\left(t_1-\varepsilon',t_1\right)$, $\vc_p^\top\vx\left(t\right)+d_p<0$. Hence, Proposition \ref{prop:connection}.\ref{subprop:connection_constrained} holds.

    Consider the connection between an unconstrained arc and a constrained arc. Assume that $\vx$ enters an unconstrained arc $\systembehavior_2$ from a constrained arc $\systembehavior_1$. By Proposition \ref{prop:constrainedarc_control}, $\forall p\in\mathcal{P}_1$, $r\in\N^*$, $\where{\frac{\mathrm{d}^r}{\mathrm{d}t^r}\left(\vc_p^\top\vx+d_p\right)}{t_1^-}=0$; hence,
    \begin{equation}
        \begin{dcases}
            \vc_p^\top\vx\left(t_1\right)+d_p=0,\\
            \forall r\in\left[r_p-1\right],\,\vc_p^\top\vA^{r}\vx\left(t_1\right)=0,\\
            \vc_p^\top\vA^{r_p-1}\left(\vA\vx\left(t_1\right)+u\left(t_1^-\right)\vb\right)=0.
        \end{dcases}
    \end{equation}
    Assume that $u\equiv u_2\in\left\{\pm\um\right\}$ on $\systembehavior_2$. Then, $\forall r\in\left[r_p-1\right]$, $\where{\frac{\mathrm{d}^r}{\mathrm{d}t^r}\left(\vc_p^\top\vx+d_p\right)}{t_1^+}=\vc_p^\top\vA^{r}\vx\left(t_1\right)=0$ since $\vc_p^\top\vA^{r-1}\vb=0$. Since $\exists\varepsilon>0$, s.t. $\forall t\in\left(t_1,t_1+\varepsilon\right)$, $\vc_p^\top\vx\left(t\right)+d_p<0$, it holds that $\where{\frac{\mathrm{d}^{r_p}}{\mathrm{d}t^{r_p}}\left(\vc_p^\top\vx+d_p\right)}{t_1^+}=\vc_p^\top\vA^{r_p-1}\left(\vA\vx\left(t_1\right)+u_2\vb\right)\leq0$. Therefore, $\vc_p^\top\vA^{r_p-1}\vb\left(u_2-u\left(t_1^-\right)\right)\leq0$. Since $\vc_p^\top\vA^{r_p-1}\vb\not=0$ and $\abs{u\left(t_1^-\right)}\leq\abs{u_2}=\um$, \eqref{eq:connection_unconstrained_constrained_control_s2unconstrained} holds.

    Furthermore, consider the monotonicity of $\vc_p^\top\vx+\vd_p$ at $t_1$ to guarantee the feasibility of $\systembehavior_2$ where $\vc_p^\top\vx+\vd_p<0$ for $\left(t_1,t_1+\varepsilon\right)$. Note that on $\systembehavior_2$, $\dot\vx=\vA\vx+u_2\vb$; hence, $\forall r\in\N^*$, $\where{\frac{\mathrm{d}^{r}}{\mathrm{d}t^{r}}\left(\vc_p^\top\vx+d_p\right)}{t_1^+}=\vc_p^\top\vA^{r-1}\left(\vA\vx\left(t_1\right)+u_2\vb\right)$. Since $\forall r\in\left[r_p-1\right]$, $\where{\frac{\mathrm{d}^{r}}{\mathrm{d}t^{r}}\left(\vc_p^\top\vx+d_p\right)}{t_1^+}=0$, Lemma \ref{lemma:solution_linearODEs} implies that $\exists \hat{r}_p\in\left[r_p,n\right]\cap\N$, s.t. $\where{\frac{\mathrm{d}^{\hat{r}_p}}{\mathrm{d}t^{\hat{r}_p}}\left(\vc_p^\top\vx+d_p\right)}{t_1^+}<0$ and $\forall r\in\left[\hat{r}_p-1\right]$, $\where{\frac{\mathrm{d}^{r}}{\mathrm{d}t^{r}}\left(\vc_p^\top\vx+d_p\right)}{t_1^+}=0$.

    Conversely, assume that $\dot\vx=\widehat{\vA}_1\vx$ for $\left(t_0,t_1\right)$ and $\dot\vx=\vA\vx+u_2\vb$ for $\left(t_1,t_2\right)$. Assume that $\vx\left(t_1\right)$ satisfies \eqref{eq:connection_unconstrained_constrained_s1tos2} and \eqref{eq:constrainedarc_highorder_constraint} for $p\in\mathcal{P}_1$. Then, $\vc_p^\top\vx\left(t_1\right)+d_p=0$ and
    \begin{equation}
        \begin{dcases}
            \where{\frac{\mathrm{d}^{\hat{r}_p}}{\mathrm{d}t^{\hat{r}_p}}\left(\vc_p^\top\vx+d_p\right)}{t_1^+}<0,\\
            \forall r\in\left[\hat{r}_p-1\right],\,\where{\frac{\mathrm{d}^r}{\mathrm{d}t^r}\left(\vc_p^\top\vx+d_p\right)}{t_1^+}=0.
        \end{dcases}
    \end{equation}
    Hence, $\exists\varepsilon>0$, $\forall p\in\mathcal{P}_1$, $t\in\left(t_1,t_1+\varepsilon\right)$, $\vc_p^\top\vx\left(t\right)+d_p<0$. By Proposition \ref{prop:constrainedarc_control}, $\forall p\in\mathcal{P}_1$, $t\in\left[t_0,t_1\right]$, $\vc_p^\top\vx\left(t\right)+d_p\equiv0$. So Proposition \ref{prop:connection}.\ref{subprop:connection_constrained_unconstrained} holds. Similarly, Proposition \ref{prop:connection}.\ref{subprop:connection_unconstrained_constrained} holds.
\end{proof}

\subsection{Proofs in Section \ref{sec:KeypointsFeasibilityOptimalTrajectory}}

\begin{proof}[Proof of Proposition \ref{prop:constrainedarc_end_feasibility}]
    Proposition \ref{prop:constrainedarc_end_feasibility}.\ref{prop:constrainedarc_end_feasibility_state} holds for the same reason as that of Proposition \ref{prop:connection}.

    Consider Proposition \ref{prop:constrainedarc_end_feasibility}.\ref{prop:constrainedarc_end_feasibility_control}. Proposition \ref{prop:constrainedarc_control} implies that $u=-\frac{\vc_p^\top\vA^{r_p}}{\vc_p^\top\vA^{r_p-1}\vb}\vx$. $\exists\varepsilon>0$, s.t. $u\left(t\right)\leq\um$ on $\left[t_0,t_0+\varepsilon\right]$ if and only if (1) $\exists\varepsilon>0$, s.t. $u<\um$ for $\left(t_0,t_0+\varepsilon\right)$, or (2) $\exists\varepsilon>0$, s.t. $u\left(t\right)\equiv\um$ on $\left[t_0,t_0+\varepsilon\right]$. By Proposition \ref{prop:constrainedarc_control}, (1) is equivalent to $\exists\hat{r}_p\in\left[0,n\right]\cap\N$, s.t. $\forall  r\in\left[0,\hat{r}_p-1\right]\cap\N$, $\frac{\mathrm{d}^r\left(u-\um\right)}{\mathrm{d}t^r}\left(t_0\right)=0$, and $\frac{\mathrm{d}^{\hat{r}_p}\left(u-\um\right)}{\mathrm{d}t^{\hat{r}_p}}\left(t_0\right)<0$. (2) is equivalent to $\forall r\in\left[0,n\right]\cap\N$, $\frac{\mathrm{d}^r\left(u-\um\right)}{\mathrm{d}t^r}\left(t_0\right)=0$.
\end{proof}

\begin{proof}[Proof of Proposition \ref{prop:tangentcondition}]
    $\vc^\top\vx\left(t_1\right)+d=0$. $\exists\varepsilon>0$, s.t. $B_\varepsilon\left(t_1\right)\subset\left(t_0,t_2\right)$ and $\forall t\in\Setminus{B_\varepsilon\left(t_1\right)}{\left\{t_1\right\}}$, $\vc^\top\vx\left(t\right)+d<0$. Note that $\forall r\in\N^*$, $\frac{\mathrm{d}^r}{\mathrm{d}t^r}\left(\vc^\top\vx+d\right)=\vc^\top\widehat{\vA}^{r-1}\left(\widehat{\vA}\vx+\widehat{\vb}\right)$.

    Assume that $\forall r\in\left[n\right]$, $\where{\frac{\mathrm{d}^r}{\mathrm{d}t^r}\left(\vc^\top\vx+d\right)}{t_1}=0$. By Lemma \ref{lemma:solution_linearODEs}, $\forall t\in B_\varepsilon\left(t_1\right)$, $\vc^\top\vx\left(t\right)+d\equiv0$, which contradicts the tangent condition. Hence, $\exists \hat{r}\in\left[n\right]$, s.t. $\forall r\in\left[\hat{r}-1\right]$, $\where{\frac{\mathrm{d}^r}{\mathrm{d}t^r}\left(\vc^\top\vx+d\right)}{t_1}=0$, and $\where{\frac{\mathrm{d}^{\hat{r}}}{\mathrm{d}t^{\hat{r}}}\left(\vc^\top\vx+d\right)}{t_1}\not=0$.

    If $\hat{r}$ is odd, then $\vc^\top\vx+d$ strictly crosses $0$ at $t_1$, which contradicts the tangent condition. Hence, $\hat{r}$ is even. If $\where{\frac{\mathrm{d}^{\hat{r}}}{\mathrm{d}t^{\hat{r}}}\left(\vc^\top\vx+d\right)}{t_1}>0$, then $\vc^\top\vx+d$ achieves a strict local minimum at $t_1$, which contradicts the maximum condition. Hence, $\where{\frac{\mathrm{d}^{\hat{r}}}{\mathrm{d}t^{\hat{r}}}\left(\vc^\top\vx+d\right)}{t_1}<0$. Therefore, \eqref{eq:tangentcondition} holds.

    Conversely, assume that \eqref{eq:tangentcondition} holds. Considering the Taylor of $\vc^\top\vx+d$ at $t_1$, it holds that $\vc^\top\vx+d<0$ at a deleted neighborhood of $t_1$. Hence, $\systembehavior$ is tangent to the boundary of the constraint $\vc^\top\vx+d$ at $t_1$.
\end{proof}

\subsection{Proofs in Section \ref{sec:StateCentricNecessaryCondition}}

\begin{proof}[Proof of Theorem \ref{thm:uniqueness_nochattering}]
    If $\forall t\in\left[0,t_\f^*\right]$, $\vx_1^*\left(t\right)=\vx_2^*\left(t\right)$, then it holds a.e. that $u_1^*\left(t\right)=u_2^*\left(t\right)$ since $\vx_1^*\left(t_0\right)=\vx_2^*\left(t_0\right)=\vx_0$.

    Assume that $\exists t'\in\left(0,t_\f^*\right)$, s.t. $\vx_1^*\left(t'\right)\not=\vx_2^*\left(t'\right)$. Let $t_0=\arg\min\left\{t\in\left(0,t_\f^*\right):\vx_1^*\left(t\right)\not=\vx_2^*\left(t\right)\right\}$. Evidently, $0\leq t_0<t_\f^*$. Assume that $\vx_1^*\left(t_0\right)\not=\vx_2^*\left(t_0\right)$. Then, $t_0>0$. Due to the continuity of $\vx$, $\forall t\in\left(0,t_0\right)$, $\vx_1^*\left(t\right)\equiv\vx_2^*\left(t\right)$ implies that $\vx_1^*\left(t_0\right)=\vx_2^*\left(t_0\right)$, which contradicts the assumption. Therefore, $\vx_1^*\left(t_0\right)=\vx_2^*\left(t_0\right)$ holds. Furthermore, $\forall t\in\left[0,t_0\right]$, $\vx_1^*\left(t\right)=\vx_2^*\left(t\right)$ implies that $u_1^*\left(t\right)=u_2^*\left(t\right)$ for $t\in\left[0,t_0\right]$ a.e.

    According to Bellman's principle of optimality \cite{bellman1952theory}, $\vx=\vx_1^*\left(t\right)$ and $\vx=\vx_2^*\left(t\right)$, $t\in\left[t_0,t_\f^*\right]$ are still the time-optimal trajectories between $\vx_1^*\left(t_0\right)=\vx_2^*\left(t_0\right)$ and $\vx_1^*\left(t_\f^*\right)=\vx_2^*\left(t_\f^*\right)=\vxf$. Let $u_3^*\left(t\right)=\frac12\left(u_1^*\left(t\right)+u_2^*\left(t\right)\right)$ and $\vx_3^*\left(t\right)=\frac12\left(\vx_1^*\left(t\right)+\vx_2^*\left(t\right)\right)$. Evidently, $u_3^*\left(t\right)$ is feasible since the feasible set of $\left(u,\vx\right)$ is convex; hence, $u_3^*\left(t\right)$ is optimal.

    Denote that $\forall k=1,2$, the switching law of $\vx=\vx_i^*\left(t\right)$, $t\in\left[t_0,t_\f^*\right]$ is $\switchinglaw_i=\systembehavior_1^{(i)}\systembehavior_2^{(i)}\dots\systembehavior_{N_i}^{(i)}$, where $\forall j\in\left[N_i\right]$, $\systembehavior_j^{(i)}=\left(\widehat{\vA}_j^{(i)},\widehat{\vb}_j^{(i)},\vF_j^{(i)},\vg_j^{(i)},\mathcal{P}_j^{(i)}\right)$ lasts for $t\in\left(t_{j-1}^{(i)},t_j^{(i)}\right)$. Among them, $t_0^{(i)}=t_0$ and $t_{N_i}^{(i)}=t_\f^*$. Evidently, if $\systembehavior_1^{(1)}=\systembehavior_1^{(2)}$, then $\forall t\in\left(t_0,\min\left\{t_1^{(1)},t_1^{(2)}\right\}\right)$, $u_1^*\left(t\right)=u_2^*\left(t\right)$ and $\vx_1^*\left(t\right)=\vx_2^*\left(t\right)$, which contradicts the definition of $t_0$. So $\systembehavior_1^{(1)}\not=\systembehavior_1^{(2)}$.

    Assume that both $\systembehavior_1^{(1)}$ and $\systembehavior_1^{(2)}$ are unconstrained arcs. Without loss of generality, assume that $u_1^*\left(t\right)\equiv\um$ in $\systembehavior_1^{(1)}$; hence, $u_2^*\left(t\right)\equiv-\um$ in $\systembehavior_1^{(2)}$. Then, $\exists\varepsilon>0$, s.t. $\forall t\in\left(t_0,t_0+\varepsilon\right)$, $\vC\vx_1^*\left(t\right)+\vd<\vzero$ and $\vC\vx_2^*\left(t\right)+\vd<\vzero$. Therefore, $\vC\vx_3^*\left(t\right)+\vd=\vC\frac{\vx_1^*\left(t\right)+\vx_2^*\left(t\right)}{2}+\vd<\vzero$. In other words, $\vx_3^*$ is strictly unconstrained in $\in\left(t_0,t_0+\varepsilon\right)$. According to Proposition \ref{prop:constrainedarc_control}, $\forall t\in\left(t_0,t_0+\varepsilon\right)$, $u_3^*\left(t\right)\in\left\{\pm\um\right\}$, which contradicts the fact that $u_3^*\left(t\right)=\frac{u_1^*\left(t\right)+u_2^*\left(t\right)}{2}\equiv0$. Therefore, one of $\systembehavior_1^{(1)}$ and $\systembehavior_1^{(2)}$ is constrained. Without loss of generality, assume that $\systembehavior_1^{(1)}$ is constrained.

    Assume that $\systembehavior_1^{(2)}$ is an unconstrained arc. Then, $\exists\varepsilon\in\left(0,\min\left\{t_1^{(1)},t_1^{(2)}\right\}-t_0\right)$, s.t. $\forall t\in\left(t_0,t_0+\varepsilon\right)$, $\vC\vx_2^*\left(t\right)+\vd<\vzero$. Since $\vC\vx_1^*\left(t\right)+\vd\leq\vzero$, it holds that $\vC\vx_3^*\left(t\right)+\vd=\vC\frac{\vx_1^*\left(t\right)+\vx_2^*\left(t\right)}{2}+\vd<\vzero$. In other words, $\vx_3^*$ is strictly unconstrained in $\in\left(t_0,t_0+\varepsilon\right)$. According to Proposition \ref{prop:constrainedarc_control}, $\forall t\in\left(t_0,t_0+\varepsilon\right)$, $u_3^*\left(t\right)=\frac{u_1^*\left(t\right)+u_2^*\left(t\right)}{2}\in\left\{\pm\um\right\}$. Note that $u_1^*\left(t\right)\in\left[-\um,\um\right]$ and $u_2^*\left(t\right)\equiv u_0\in\left\{\pm\um\right\}$; hence, $\forall t\in\left(t_0,t_0+\varepsilon\right)$, $u_1^*\left(t\right)\equiv u_0$. Therefore, $\vx_1^*\left(t\right)=\vx_2^*\left(t\right)$ for $\left[t_0,t_0+\varepsilon\right]$, which contradicts the definition of $t_0$.

    Therefore, both $\systembehavior_1^{(1)}$ and $\systembehavior_1^{(2)}$ are constrained arcs. Assume that $\mathcal{P}_1^{(1)}\cap\mathcal{P}_1^{(2)}\not=\varnothing$. Arbitrarily consider $p\in\mathcal{P}_1^{(1)}\cap\mathcal{P}_1^{(2)}$. $\forall t\in\left(t_0,\min\left\{t_1^{(1)},t_1^{(2)}\right\}\right)$, $i=1,2$, $\dot\vx_i^*\left(t\right)=\left(\vA-\vb\widehat{\va}_p^\top\right)\vx_i^*\left(t\right)$. $\vx_1^*\left(t_0\right)=\vx_2^*\left(t_0\right)$ implies that $\vx_1^*\left(t\right)\equiv\vx_2^*\left(t\right)$, which contradicts the definition of $t_0$. Therefore, $\mathcal{P}_1^{(1)}\cap\mathcal{P}_1^{(2)}=\varnothing$. Then, $\exists\varepsilon\in\left(0,\min\left\{t_1^{(1)},t_1^{(2)}\right\}-t_0\right)$, s.t. $\forall t\in\left(t_0,t_0+\varepsilon\right)$, $i=1,2$, $p\not\in\mathcal{P}_1^{(i)}$, $\vc_p^\top\vx_i^*\left(t\right)+d_p<0$. Hence, $\forall p\in\left[P\right]$, $\vc_p^\top\vx_3^*\left(t\right)+d_p=\vc_p^\top\frac{\vx_1^*\left(t\right)+u\vx_2^*\left(t\right)}{2}+d_p<0$. By Proposition \ref{prop:constrainedarc_control}, $u_3^*\left(t\right)\in\left\{\pm\um\right\}$; hence, $u_1^*\left(t\right)=u_2^*\left(t\right)\in\left\{\pm\um\right\}$, which contradicts the definition of $t_0$. Therefore, $\forall t\in\left[t_0,t_\f^*\right]$, $\vx_1^*\left(t\right)=\vx_2^*\left(t\right)$; hence, $u_1^*\left(t\right)=u_2^*\left(t\right)$ a.e.
\end{proof}

\begin{proof}[Proof of Proposition \ref{prop:disturbedtrajectory_error}]
    Since $\norm[]{\vt-\vt'}<\varepsilon<\frac12\min_{i\in\left[M-1\right]}\left\{\abs{t_{i+1}-t_i}\right\}$, it holds that $\forall i\in\left[M-1\right]$, $\max\left\{t_i,t_i'\right\}<\min\left\{t_{i+1},t_{i+1}'\right\}$. $\forall i\in\left[M\right]$, let $\hat{t}_{2i-1}=\min\left\{t_i,t_i'\right\}$, $\hat{t}_{2i}=\max\left\{t_i,t_i'\right\}$, and $\hat{t}_0=t_0$. Then $\left\{\hat{t}_i\right\}_{i=0}^{2M}$ increases monotonically.

    Note that $\forall\vA\in\R^{n\times n}$, $\e^{\vA t}$ is locally Lipschitz continuous w.r.t. $t$. Therefore, $\exists C_1>0$, s.t. $\forall t\in\left[0,t_M-t_0\right]$, $\max_{i\in\left[M\right]}\norm[]{\e^{\vA_i t}-\vI}\leq C_1t$. Let $C_2=\max_{i\in\left[M\right]}\norm[]{\vb_i}$ and $C_3=\max_{i\in\left[M\right]}\sup_{t\in\left[0,t_M-t_0\right]}\norm[]{\ve^{\vA_it}}\geq1$.

    By Lemma \ref{lemma:solution_linearODEs}, $\forall i\in\left[M-1\right]\cup\left\{0\right\}$, $t\in\left[\hat{t}_{2i},\hat{t}_{2i+1}\right]$,
    \begin{equation}
        \begin{dcases}
            \vx\left(t\right)=\e^{\vA_i\left(t-\hat{t}_{2i}\right)}\vx\left(\hat{t}_{2i}\right)+\int_{\hat{t}_{2i}}^{t}\e^{\vA_i\left(t-\tau\right)}\vb_i\mathrm{d}\tau,\\
            \vx'\left(t\right)=\e^{\vA_i\left(t-\hat{t}_{2i}\right)}\vx'\left(\hat{t}_{2i}\right)+\int_{\hat{t}_{2i}}^{t}\e^{\vA_i\left(t-\tau\right)}\vb_i\mathrm{d}\tau.
        \end{dcases}
    \end{equation}
    Therefore, $\forall t\in\left[\hat{t}_{2i},\hat{t}_{2i+1}\right]$, $\norm[]{\e^{\vA_i\left(t-\hat{t}_{2i}\right)}}\leq C_3$ implies that
    \begin{equation}\label{prop:disturbedtrajectory_error_0to1}
        \begin{aligned}
            &\norm[]{\vx\left(t\right)-\vx'\left(t\right)}\leq C_3\norm[]{\vx\left(\hat{t}_{2i}\right)-\vx'\left(\hat{t}_{2i}\right)}.
        \end{aligned}
    \end{equation}

    $\forall i\in\left[M\right]$, let (a) $j=i+1$, $j'=i$ if $t_i\leq t_i'$; (b) $j=i$, $j'=i+1$ if $t_i>t_i'$. Then, $\forall t\in\left[\hat{t}_{2i-1},\hat{t}_{2i}\right]$, $\dot\vx=\vA_j\vx+\vb_j$ and $\dot\vx'=\vA_{j'}\vx+\vb_{j'}$. Lemma \ref{lemma:solution_linearODEs} implies that $\forall t\in\left[\hat{t}_{2i-1},\hat{t}_{2i}\right]$,
    \begin{equation}
        \begin{dcases}
            \vx\left(t\right)=\e^{\vA_{j}\left(t-\hat{t}_{2i-1}\right)}\vx\left(\hat{t}_{2i-1}\right)+\int_{\hat{t}_{2i-1}}^{t}\e^{\vA_{j}\left(t-\tau\right)}\vb_{j}\mathrm{d}\tau,\\
            \vx'\left(t\right)=\e^{\vA_{j'}\left(t-\hat{t}_{2i-1}\right)}\vx'\left(\hat{t}_{2i-1}\right)+\int_{\hat{t}_{2i-1}}^{t}\e^{\vA_{j'}\left(t-\tau\right)}\vb_{j'}\mathrm{d}\tau.
        \end{dcases}
    \end{equation}
    Therefore, $\forall t\in\left[\hat{t}_{2i-1},\hat{t}_{2i}\right]$, it holds that
    \begin{equation}\label{prop:disturbedtrajectory_error_1to2}
        \begin{aligned}
            &\norm[]{\vx\left(t\right)-\vx'\left(t\right)}\\
            \leq&\norm[]{\e^{\vA_{j}\left(t-\hat{t}_{2i-1}\right)}-\e^{\vA_{j'}\left(t-\hat{t}_{2i-1}\right)}}\norm[]{\vx\left(\hat{t}_{2i-1}\right)}\\
            +&\norm[]{\e^{\vA_{j'}\left(t-\hat{t}_{2i-1}\right)}}\norm[]{\vx\left(\hat{t}_{2i-1}\right)-\vx'\left(\hat{t}_{2i-1}\right)}\\
            +&\varepsilon\norm[]{\e^{\vA_{j}\left(t-\hat{t}_{2i-1}\right)}\vb_j-\e^{\vA_{j'}\left(t-\hat{t}_{2i-1}\right)}\vb_{j'}}\\
            \leq&2\left(C_1\norm[]{\vx}+C_2C_3\right)\varepsilon+C_3\norm[]{\vx\left(\hat{t}_{2i-1}\right)-\vx'\left(\hat{t}_{2i-1}\right)}.
        \end{aligned}
    \end{equation}

    Since $\vx\left(\hat{t}_0\right)=\vx'\left(\hat{t}_0\right)$, \eqref{prop:disturbedtrajectory_error_0to1} and \eqref{prop:disturbedtrajectory_error_1to2} implies that
    \begin{equation}
        \norm[\infty]{\vx-\vx'}\leq 2\left(C_3^{2M-2}-1\right)\left(C_1\norm[]{\vx}+C_2C_3\right)\varepsilon.
    \end{equation}
    Therefore, \eqref{eq:disturbedtrajectory_error_sametime} holds.
\end{proof}

\begin{proof}[Proof of Theorem \ref{thm:disturbedtrajectory_feasibility}]
    Fix $\vx=\vx\left(t\right)$. Firstly, let
    \begin{equation}
        \varepsilon<\frac12\min_{i\in\left[M-1\right]}\left\{\abs{t_{i+1}-t_i}\right\}.
    \end{equation}
    According to Proposition \ref{prop:disturbedtrajectory_error}, $\exists C>0$ only dependent on $\vx=\vx\left(t\right)$, s.t. if $\norm[]{\vt-\vt'}<\varepsilon$, then \eqref{eq:disturbedtrajectory_error_sametime} holds.

    Secondly, consider the inequality constraints $\vc_p^\top\vx'+d_p\leq0$. Denote $\left\{i^{(p)}_j\right\}_{j=0}^{N_p}\subset\left[M\right]\cup\left\{0\right\}$ increasing strictly monotonically, s.t. $i^{(p)}_0=0$, $i^{(p)}_{N_p}=M$, and $\forall j\in\left[N_p-1\right]$, $\vc_p^\top\vx+d_p\leq0$ switches between active and inactive at $t_{i^{(p)}_{j}}$. In other words, $\forall j\in\left[N_p-1\right]$, $\vc_p^\top\vx\left(t_{i_j^{(p)}}\right)+d_p=0$ holds. Furthermore, $\forall j\in\left[N_p\right]$, $t\in\left(t_{i^{(p)}_{j-1}},t_{i^{(p)}_{j}}\right)$, either $\vc_p^\top\vx\left(t\right)+d_p\equiv0$ or $\vc_p^\top\vx\left(t\right)+d_p<0$ holds. Since $\switchinglaw=\switchinglaw'$, the above property on $\left(\vx\left(t\right),\vt\right)$ holds for $\left(\vx'\left(t\right),\vt'\right)$ as well.

    Consider the case where $\vc_p^\top\vx+d_p\equiv0$ in $\left(t_{i^{(p)}_{j-1}},t_{i^{(p)}_{j}}\right)$, as shown in Fig. \ref{fig:proof_theorem2}(a). $\switchinglaw=\switchinglaw'$ implies that $\vc_p^\top\vx'+d_p\equiv0$ holds. Therefore, $\vc_p^\top\vx'\left(t\right)+d_p\leq0$ is feasible in $\left(t_{i^{(p)}_{j-1}}',t_{i^{(p)}_{j}}'\right)$.

    Consider the case where $\vc_p^\top\vx+d_p<0$ in $\left(t_{i^{(p)}_{j-1}},t_{i^{(p)}_{j}}\right)$, as shown in Fig. \ref{fig:proof_theorem2}(b). The feasibility of $\vx'\left(t\right)$ in $\left[t_{i^{(p)}_{j-1}}',t_{i^{(p)}_{j}}'\right]$ is discussed in two parts, i.e., far from and near the keypoints.

    Denote $f_p\left(t\right)=\vc_p^\top\vx\left(t\right)+d_p$ and $f_p'\left(t\right)=\vc_p^\top\vx\left(t\right)+d_p$. Then, $\forall r\in\N^*$, $\frac{\mathrm{d}^rf_p}{\mathrm{d}t^r}=\vc_p^\top\vA_{i_j^{(p)}}^{r-1}\left(\vA_{i_j^{(p)}}\vx+\vb_{i_j^{(p)}}\right)$ is continuous in $\left(t_{i^{(p)}_{j-1}},t_{i^{(p)}_{j-1}+1}\right)$. If $f_p\left(t_{i^{(p)}_{j-1}}\right)<0$ holds, then let $r_j^{(p)}=0$ and $t_{1,j}^{(p)}=t_{i_{j-1}^{(p)}}$; otherwise, by Propositions \ref{prop:connection}, \ref{prop:constrainedarc_end_feasibility}, and \ref{prop:tangentcondition}, $\exists r_j^{(p)}\in\left[n\right]$, s.t. $f_{p,j}\triangleq\frac{\mathrm{d}^{r_j^{(p)}}f_p}{\mathrm{d}t^{r_j^{(p)}}}\left(t_{i^{(p)}_{j-1}}^+\right)<0$ and $\forall r\in\left[r_j^{(p)}-1\right]$, $\frac{\mathrm{d}^{r}f_p}{\mathrm{d}t^r}\left(t_{i^{(p)}_{j-1}}^+\right)=0$. Let $t_{1,j}^{(p)}\in\left(t_{i^{(p)}_{j-1}},t_{i^{(p)}_{j-1}+1}\right)$ is sufficiently small, s.t. $\forall t\in\left(t_{i^{(p)}_{j-1}},t_{1,j}^{(p)}\right)$, $\frac{\mathrm{d}^{r_j^{(p)}}f_p}{\mathrm{d}t^{r_j^{(p)}}}\left(t\right)<\frac12f_{p,j}<0$ holds. $t_{2,j}^{(p)}\leq t_{i^{(p)}_j}$ is constructed similarly. Then, $f_p\left(t\right)$ decreases strictly monotonically in $\left(t_{i^{(p)}_{j-1}},t_{1,j}^{(p)}\right)$ and increases strictly monotonically in $\left(t_{2,j}^{(p)},t_{i^{(p)}_j}\right)$.

    \begin{figure}[!t]
        \centering
        \includegraphics[width=0.5\columnwidth]{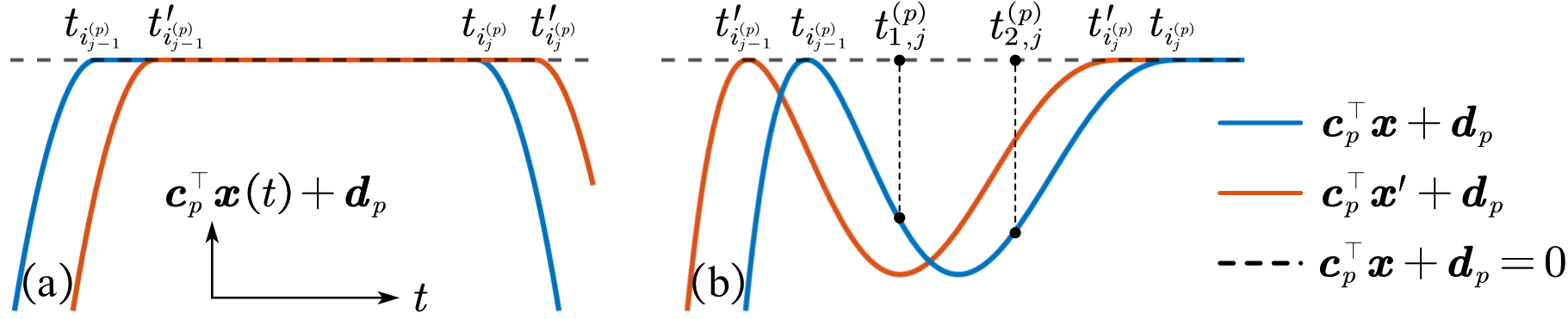}
        \caption{Feasibility of the perturbed trajectory near the keypoints. (a) Constrained arc. (b) Unconstrained arc.}
        \label{fig:proof_theorem2}
    \end{figure}

    Due to the continuity of $f_p$, $\sup_{t\in\left[t_{1,j}^{(p)},t_{2,j}^{(p)}\right]}f_p\left(t\right)<0$ holds. For the part far from keypoints, i.e., $\left[t_{1,j}^{(p)},t_{2,j}^{(p)}\right]$, let
    \begin{equation}\label{eq:disturbedtrajectory_feasibility_far}
        \varepsilon<-\left(4C\norm[]{\vc_p}\right)^{-1}\sup\nolimits_{t\in\left[t_{1,j}^{(p)},t_{2,j}^{(p)}\right]}f_p\left(t\right).
    \end{equation}
    Then, according to \eqref{eq:disturbedtrajectory_error_sametime}, $\forall t\in\left[t_{1,j}^{(p)},t_{2,j}^{(p)}\right]$, it holds that
    \begin{equation}
        f_p'\left(t\right)\leq f_p\left(t\right)+\norm[]{\vc_p}\norm[\infty]{\vx-\vx'}<0.
    \end{equation}
    In other words, $\vc_p^\top\vx'\left(t\right)+d_p\leq0$ is feasible in $\left[t_{1,j}^{(p)},t_{2,j}^{(p)}\right]$.

    Consider the part near the left keypoint, i.e., $\left[t_{i^{(p)}_{j-1}}',t_{1,j}^{(p)}\right]$. Assume that $t_{i^{(p)}_{j-1}}'<t_{1,j}^{(p)}$; otherwise, $\left[t_{i^{(p)}_{j-1}}',t_{1,j}^{(p)}\right]=\varnothing$.

    For the case where $f_p\left(t_{i^{(p)}_{j-1}}\right)<0$ and $t_{1,j}^{(p)}=t_{i_{j-1}^{(p)}}$, let $\varepsilon$ be sufficiently small, s.t. $f_p\left(t\right)\leq\frac12f_p\left(t_{i^{(p)}_{j-1}}\right)$ in $\left[t_{i^{(p)}_{j-1}}',t_{i^{(p)}_{j-1}}\right]$. By \eqref{eq:disturbedtrajectory_error_sametime} and \eqref{eq:disturbedtrajectory_feasibility_far}, $\forall t\in\left[t_{i^{(p)}_{j-1}}',t_{i^{(p)}_{j-1}}\right]$, $f_p'\left(t\right)\leq0$ is feasible since $f_p'\left(t\right)\leq f_p\left(t\right)+\norm[]{\vc_p}\norm[\infty]{\vx-\vx'}\leq\frac14f_p\left(t_{i^{(p)}_{j-1}}\right)<0$.

    For the case where $f_p\left(t_{i^{(p)}_{j-1}}\right)=0$ and $t_{i_{j-1}^{(p)}}<t_{1,j}^{(p)}$, let $\varepsilon<\hat\varepsilon\triangleq-f_{p,j}\left(4C\norm[]{\vc_p}\norm[]{\vA_{i_j^{(p)}}}^{r_j^{(p)}}+1\right)^{-1}$.

    Hence, $\forall t\in\left[t_{i^{(p)}_{j-1}},t_{1,j}^{(p)}\right]$, it holds that
    \begin{align}
            &\frac{\mathrm{d}^{r_j^{(p)}}f_p'}{\mathrm{d}t^{r_j^{(p)}}}\left(t\right)\leq\frac{\mathrm{d}^{r_j^{(p)}}f_p}{\mathrm{d}t^{r_j^{(p)}}}\left(t\right)+\norm[]{\vc_p}\norm[]{\vA_{i_j^{(p)}}}^{r_j^{(p)}}\norm[\infty]{\vx-\vx'}\notag\\
            \leq&\frac12f_{p,j}-\frac14f_{p,j}<0.
    \end{align}
    If $t_{i^{(p)}_{j-1}}'<t_{i^{(p)}_{j-1}}$, let $\varepsilon$ be sufficiently small, s.t.
    \begin{equation}
        \sup\nolimits_{\delta\in\left[0,\varepsilon\right]}\norm[]{\vx\left(t_{i^{(p)}_{j-1}}-\delta\right)-\vx\left(t_{i^{(p)}_{j-1}}\right)}<\hat\varepsilon.
    \end{equation}
    Then, $\forall t\in\left[t_{i^{(p)}_{j-1}}',t_{i^{(p)}_{j-1}}\right]\subset\left[t_{i^{(p)}_{j-1}}-\varepsilon,t_{i^{(p)}_{j-1}}\right]$, it holds that
    \begin{align}
        &\frac{\mathrm{d}^{r_j^{(p)}}f_p'}{\mathrm{d}t^{r_j^{(p)}}}\left(t\right)\leq
        \vc_p^\top\vA_{i_j^{(p)}}^{r_j^{(p)}-1}\left(\vA_{i_j^{(p)}}\vx\left(t_{i^{(p)}_{j-1}}\right)+\vb_{i_j^{(p)}}\right)\notag\\
        +&\norm[]{\vc_p}\norm[]{\vA_{i_j^{(p)}}}^{r_j^{(p)}}\left(\norm[\infty]{\vx-\vx'}+\norm[]{\vx\left(t\right)-\vx\left(t_{i^{(p)}_{j-1}}\right)}\right)\notag\\
        \leq&f_{p,j}-\frac12f_{p,j}<0.
    \end{align}
    Therefore, once $\varepsilon$ is sufficiently small, then $\frac{\mathrm{d}^{r_j^{(p)}}f_p'}{\mathrm{d}t^{r_j^{(p)}}}<0$ holds in $\left[t_{i^{(p)}_{j-1}}',t_{1,j}^{(p)}\right]$. $\switchinglaw'=\switchinglaw$ implies that $\forall r\in\left[r_j^{(p)}-1\right]$, $\frac{\mathrm{d}^{r}f_p'}{\mathrm{d}t^r}\left(t_{i^{(p)}_{j-1}}^+\right)=0$; hence, $f_p'$ decreases strictly monotonically in $\left(t_{i^{(p)}_{j-1}}',t_{1,j}^{(p)}\right)$. Therefore, $\forall t\in\left[t_{i^{(p)}_{j-1}}',t_{1,j}^{(p)}\right]$, $\vc_p^\top\vx'\left(t\right)+d_p<0$ is feasible. Similarly, once $\varepsilon$ is sufficiently small, then $\vc_p^\top\vx'\left(t\right)+d_p<0$ is feasible in $\left[t_{2,j}^{(p)},t_{i^{(p)}_j}'\right]$.

    Note that the above conditions on $\varepsilon$ only depends on $\vx=\vx\left(t\right)$. In summary, once $\varepsilon$ is sufficiently small, then $\forall p\in\left[P\right]$, $t\in\left[t_0',t_M'\right]$, $\vc^\top\vx'\left(t\right)+d_p\leq0$ holds.

    Thirdly, the control constraints $\abs{u'}\leq\um$ can be proved by applying a similar analysis when $\varepsilon$ is sufficiently small.
\end{proof}

\begin{proof}[Proof of Proposition \ref{prop:Jacobian}]
    $\forall j>i$, $\frac{\partial\vx_i}{\partial t_j}=\vzero$ holds evidently. According to Lemma \ref{lemma:solution_linearODEs}, $\vx_i=\e^{\vA_i\left(t_i-t_{i-1}\right)}\vx_{i-1}+\int_{t_{i-1}}^{t_i}\e^{\vA_i\left(t_i-\tau\right)}\vb_i\mathrm{d}\tau$; hence, $\forall i\in\left[M\right]$, $\frac{\partial\vx_i}{\partial t_i}=\vA_i\vx_i+\vb_i$ holds. Then, $\forall i>1$, it holds that
    \begin{equation}
        \begin{aligned}
            &\frac{\partial\vx_i}{\partial t_{i-1}}=\e^{\vA_i\left(t_i-t_{i-1}\right)}\left(\frac{\partial\vx_{i-1}}{\partial t_{i-1}}-\vA_{i}\vx_{i-1}-\vb_{i}\right)\\
            =&\e^{\vA_i\left(t_i-t_{i-1}\right)}\left(\left(\vA_{i-1}-\vA_{i}\right)\vx_{i-1}+\vb_{i-1}-\vb_{i}\right).
        \end{aligned}
    \end{equation}
    So $\forall i>1$, $j<i-1$, $\frac{\partial\vx_i}{\partial t_j}=\prod_{k=i}^{j+2}\e^{\vA_k\left(t_k-t_{k-1}\right)}\frac{\partial\vx_{j+1}}{\partial t_j}$.
\end{proof}

% \begin{figure*}[!t]
%     \begin{equation}\label{eq:Jacobian_chattering_s2}\tag{52}
%         \hspace{-0.1cm}\resizebox{0.95\textwidth}{!}{$
%             \left[\begin{array}{ccccccccc}
%             -2\um&2\um&-\um&0&0&0&\dots&0&0\\
%             -2\um\left(t_3-t_1\right)&2\um\left(t_3-t_2\right)&0&0&0&0&\dots&0&0\\
%             -2\um&2\um&0&-2\um&2\um&-\um&\dots&0&0\\
%             -2\um\left(t_6-t_1\right)&2\um\left(t_6-t_2\right)&0&-2\um\left(t_6-t_4\right)&2\um\left(t_6-t_5\right)&0&\dots&0&0\\
%             \vdots&\vdots&\vdots&\vdots&\vdots&\vdots&\ddots&\vdots&\vdots\\
%             -2\um&2\um&0&-2\um&2\um&0&\dots&-2\um&2\um\\
%             -2\um\left(t_{3N}-t_1\right)&2\um\left(t_{3N}-t_2\right)&0&-2\um\left(t_{3N}-t_4\right)&2\um\left(t_{3N}-t_5\right)&0&\dots&-2\um\left(t_{3N}-t_{3N-2}\right)&2\um\left(t_{3N}-t_{3N-1}\right)\\
%             -2\um\vphi_{3:n}\left(t_{3N}-t_1\right)&2\um\vphi_{3:n}\left(t_{3N}-t_2\right)&\vzero&-2\um\vphi_{3:n}\left(t_{3N}-t_4\right)&2\um\vphi_{3:n}\left(t_{3N}-t_5\right)&\vzero&\dots&-2\um\vphi_{3:n}\left(t_{3N}-t_{3N-2}\right)&2\um\vphi_{3:n}\left(t_{3N}-t_{3N-1}\right)\\
%         \end{array}\right]
%         $}
%     \end{equation}
%     \vspace{-5mm}
% \end{figure*}

\begin{proof}[Proof of Theorem \ref{thm:Jacobian_necessary_condition}]
    Assume that $\vH:\R^{M}\to\R^{M'}$. $M'$ is the number of linearly independent equality constraints on keypoints, and $M$ is the number of keypoints. Assume that $\frac{\partial\vH}{\partial\vt_{1:\left(M-1\right)}}\left(\vt^*\right)$ has full row rank, i.e.,
    \begin{equation}\label{eq:Jacobian_necessary_condition}
        \rank\frac{\partial\vH}{\partial\vt_{1:\left(M-1\right)}}\left(\vt^*\right)=M'\leq M-1.
    \end{equation}
    Assume that the $i$-th row of $\frac{\partial\vH}{\partial\vt_{1:\left(M-1\right)}}\left(\vt^*\right)$ is linearly independent where $i\in\mathcal{I}\subset\left[M-1\right]$ and $\abs{\mathcal{I}}=M'$. Let $\varepsilon>0$ be sufficiently small to satisfy the condition of Theorem \ref{thm:disturbedtrajectory_feasibility} and the implicit function theorem \cite{stein2009real}. The implicit function $\vf:\left(t_i\right)_{i\in\Setminus{\left[M\right]}{\mathcal{I}}}\mapsto\left(t_i\right)_{i\in\mathcal{I}}$ is induced by $\vH\left(\vt\right)=\vzero$. Based on $\vf$, $\exists\vt'$ satisfies $\vH\left(\vt'\right)=\vzero$, $t_M'<t_M^*$, and $\norm[]{\vt'-\vt^*}<\varepsilon$. According to Theorem \ref{thm:disturbedtrajectory_feasibility}, the perturbed trajectory $\vx=\vx'\left(t\right)$ is feasible since $\vH\left(\vt'\right)=\vzero$ implies that $\vx'$ and $\vx^*$ have the same ASL. However, $\vx'$ achieves a shorter terminal time $t_M'<t_M^*$, which contradicts the optimality of $\vx^*$.
\end{proof}

\subsection{Proofs in Section \ref{sec:Applications}}

\begin{proof}[Proof of Corollary \ref{cor:chainofintegrators_purecontrolconstriant}]
    Chattering does not occur since no state inequality constraints exist. Assume that the optimal control switches $\left(N-1\right)$ times. In other words, $\forall i\in\left[N\right]$, $t\in\left(t_{i-1},t_i\right)$, $u\left(t\right)\equiv u_i\in\left\{\pm\um\right\}$. Note that $\vJ=\left[-\Delta u_{i+1}\vphi\left(t_N-t_i\right)\right]_{i=1}^{N-1}\in\R^{n\times\left(N-1\right)}$. The Vandermonde form of $\vJ$ implies that $\rank\vJ=\min\left\{n,N-1\right\}$ since $\Delta u_i\not=0$. By Theorem \ref{thm:Jacobian_necessary_condition}, $\rank\vJ<n$; hence, $N-1<n$.
\end{proof}

\begin{proof}[Proof of Corollary \ref{cor:chainofintegrators_onlysystembehaviors}]
    For a constrained arc $\systembehavior_i$, add the equality constraint at $t_i$, i.e., $\vx_{i,1:\abs{\systembehavior_i}}=\sgn\left(\systembehavior_i\right)x_{\m\abs{\systembehavior_i}}\ve_{\abs{\systembehavior_i}}$. Note that if $\systembehavior_i$ is constrained, i.e., $\abs{\systembehavior_i}\not=0$, then $u\equiv0$ in $\systembehavior_i$. Therefore, the constraints induced by $\switchinglaw$ are $\vx_{N}=\vxf$ and $\vx_{i,1:\abs{\systembehavior_i}}=\sgn\left(\systembehavior_i\right)x_{\m\abs{\systembehavior_i}}\ve_{\abs{\systembehavior_i}}$, $\forall i\in\left[N\right]$, $\abs{\systembehavior_i}\not=0$.

    By Condition \ref{condition:3_2_switchinglaw_ijN_}, $\abs{\systembehavior_N}=0$. Denote $\abs{\systembehavior_{N+1}}=n$, and $\mathcal{I}=\left\{i\in\left[N+1\right]:\abs{\systembehavior_i}\not=0\right\}$. According to Proposition \ref{prop:Jacobian}, $\vJ=\left(-\Delta u_{j+1}\vphi_{1:\abs{\systembehavior_i}}\left(t_i-t_j\right)\delta_{j<i}\right)_{i\in\mathcal{I},\,j\in\left[N-1\right]}$, where $\delta_{A}$ is the indicator function of condition $A$.

    Let $\vA\leftrightarrow\vB$ mean that $\vA$ has full row rank if and only if $\vB$ has full row rank. By $\forall j\in\left[N-1\right]$, $\Delta u_{j+1}\not=0$, it holds that $\vJ\leftrightarrow\left(\vphi_{1:\abs{\systembehavior_i}}\left(t_i-t_j\right)\delta_{j<i}\right)_{i\in\mathcal{I},\,j\in\left[N-1\right]}$.

    By iteratively taking differences between adjacent rows of the right-hand side, dividing by the time difference, and eliminating, it can be proved that $\vJ$ has full rank based on Conditions \ref{condition:3_2_switchinglaw_1ji}, \ref{condition:3_2_switchinglaw_ijN}, and \ref{condition:3_2_switchinglaw_ijN_}. The proof is similar to our previous work \cite{wang2025time} and is omitted due to space limitation. According to Theorem \ref{thm:Jacobian_necessary_condition}, the number of rows should be greater than the number of columns, i.e., $\sum_{i\in\mathcal{I}}\abs{\systembehavior_i}=\sum_{i=1}^{N}\abs{\systembehavior_i}+n>N-1$.
\end{proof}

\begin{proof}[Proof of Corollary \ref{cor:chainofintegrators_chattering_s2}]
    Since $\forall k\in\N^*$, $\vx_{1:2}\left(t_{3k-3}\right)=\vx_{1:2}\left(t_{3k}\right)=x_{\m2}\ve_2$, it can be proved that
    \begin{equation}
        \begin{dcases}
            u\left(t\right)\equiv u_{3k-2}=-\um,&t\in\left(t_{3k-3},t_{3k-2}\right),\\
            u\left(t\right)\equiv u_{3k-1}=\um,&t\in\left(t_{3k-2},t_{3k-1}\right),\\
            u\left(t\right)\equiv u_{3k}=-\um,&t\in\left(t_{3k-1},t_{3k}\right),
        \end{dcases}
    \end{equation}
    where $t_{3k-1}=\frac{3t_{3k}+t_{3k-3}}{4}$ and $t_{3k-2}=\frac{t_{3k}+3t_{3k-3}}{4}$.

    $\forall N\in\N^*$, the ASL between $\vx\left(t_0\right)$ and $\vx\left(t_{3N}\right)$ is denoted as $\switchinglaw_N$, which requires that $\forall k\in\left[N-1\right]$, $\vx_{3k,1:2}=x_{\m2}\ve_2$, and $\vx_{3N}=\vx\left(t_{3N}\right)$. By Proposition \ref{prop:Jacobian}, the Jacobian matrix $\vJ$, except the last column, is 
    \begin{equation}\label{eq:Jacobian_chattering_s2}
        \hspace{-0.1cm}\resizebox{0.95\textwidth}{!}{$
            \left[\begin{array}{ccccccccc}
            -2\um&2\um&-\um&0&0&0&\dots&0&0\\
            -2\um\left(t_3-t_1\right)&2\um\left(t_3-t_2\right)&0&0&0&0&\dots&0&0\\
            -2\um&2\um&0&-2\um&2\um&-\um&\dots&0&0\\
            -2\um\left(t_6-t_1\right)&2\um\left(t_6-t_2\right)&0&-2\um\left(t_6-t_4\right)&2\um\left(t_6-t_5\right)&0&\dots&0&0\\
            \vdots&\vdots&\vdots&\vdots&\vdots&\vdots&\ddots&\vdots&\vdots\\
            -2\um&2\um&0&-2\um&2\um&0&\dots&-2\um&2\um\\
            -2\um\left(t_{3N}-t_1\right)&2\um\left(t_{3N}-t_2\right)&0&-2\um\left(t_{3N}-t_4\right)&2\um\left(t_{3N}-t_5\right)&0&\dots&-2\um\left(t_{3N}-t_{3N-2}\right)&2\um\left(t_{3N}-t_{3N-1}\right)\\
            -2\um\vphi_{3:n}\left(t_{3N}-t_1\right)&2\um\vphi_{3:n}\left(t_{3N}-t_2\right)&\vzero&-2\um\vphi_{3:n}\left(t_{3N}-t_4\right)&2\um\vphi_{3:n}\left(t_{3N}-t_5\right)&\vzero&\dots&-2\um\vphi_{3:n}\left(t_{3N}-t_{3N-2}\right)&2\um\vphi_{3:n}\left(t_{3N}-t_{3N-1}\right)\\
        \end{array}\right].
        $}
    \end{equation}
    Through basic row and column transformations, it holds that $\vJ\leftrightarrow\vJ'$. Considering the rank of columns $\left(k-n+3\right)$ through $k$ of $\vJ'$, \eqref{eq:recursive_chattering_s2} holds.
\end{proof}

\section{An Example of Application of the Proposed State-Centric Necessary Condition}\label{app:comparison}

 Let's introduce Corollary 4 of order $n$ as follows. We will show why it is challenging to derive the existence/non-existence of chattering based on PMP-based necessary conditions, and how to obtain the result based on the proposed state-centric necessary condition. Specifically, we will try to derive the result based on PMP-based/state-centric necessary conditions as examples. Note that the fundamental idea of the state-centric approach has already been applied in our previous work \cite[Section \RomanNum{6}.B]{wang2025chattering}.

\subsection{Problem Statement}

 Consider the following simplified 4th-order chain-of-integrator system:
\begin{equation}\label{eq:problem_n4s2_A}
    \begin{aligned}
        \min\quad&\tf\\
        \text{s.t.}\quad&\dot{x}_1=u,\,\dot{x}_k=x_{k-1},\,k\in\left\{2,3,\dots,n\right\},\\
        &\abs{u}\leq1, x_2\leq 1,\\
        &\vx\left(0\right)=\vx_0,\,\vx\left(\tf\right)=\vx_\f.
    \end{aligned}
\end{equation}

Now, the question is: Is there a $\left(\vx_0,\vx_\f\right)$, s.t. the optimal control is chattering? In other words, can the optimal control switch infinitely many times in a finite interval $\left[0,\tf\right]$?

(For the case where $n=4$, the answer is NO according to Corollary 4 in the manuscript.)

\subsection{Preliminaries} 

 To compare fairly the proposed state-centric necessary condition and PMP-based necessary conditions, we provide some preliminaries for both approaches.

Assume that chattering occurs in the optimal control. Then, $\exists\left\{t_i\right\}_{i\in\N}\subset\left[0,\tf\right]$ increasing strictly monotonically, s.t.
\begin{itemize}
    \item $\left\{t_i\right\}_{i\in\N}$ has a finite limit point, i.e.,
    \begin{equation}\label{eq:chattering_limit_time_A}
        t_\infty\triangleq\lim_{i\to\infty}t_i\in\left[0,\tf\right].
    \end{equation}
    Furthermore, the chattering (unconstrained) arcs converge to a constrained arc at $t_\infty$, i.e.,
    \begin{equation}
        \lim_{t\to t_\infty}x_1=0,\,\lim_{t\to t_\infty}x_2=1.
    \end{equation}
    \item $\forall i\in\N$, $x_2\left(t_i\right)=1$ and $x_1\left(t_i\right)=0$ hold.
    \item $\forall i\in\N$, the control in $\left(t_i,t_{i+1}\right)$ satisfies
    \begin{equation}\label{eq:control_chattering_A}
        u\left(t\right)=\begin{dcases}
            -1,&t\in\left(t_{i},\frac34t_i+\frac14t_{i+1}\right),\\
            1,&t\in\left(\frac34t_i+\frac14t_{i+1},\frac14t_i+\frac34t_{i+1}\right),\\
            -1,&t\in\left(\frac14t_i+\frac34t_{i+1},t_{i+1}\right).
        \end{dcases}
    \end{equation}
\end{itemize}
The proof of the above conclusions can be found in our related work \cite[Section \RomanNum{6}.B]{wang2025chattering}.

The above conclusions are visualized in Figure \ref{fig:n4s2_chattering_reason_A}.

\begin{figure}[!h]
    \centering
    \includegraphics[width=0.3\textwidth]{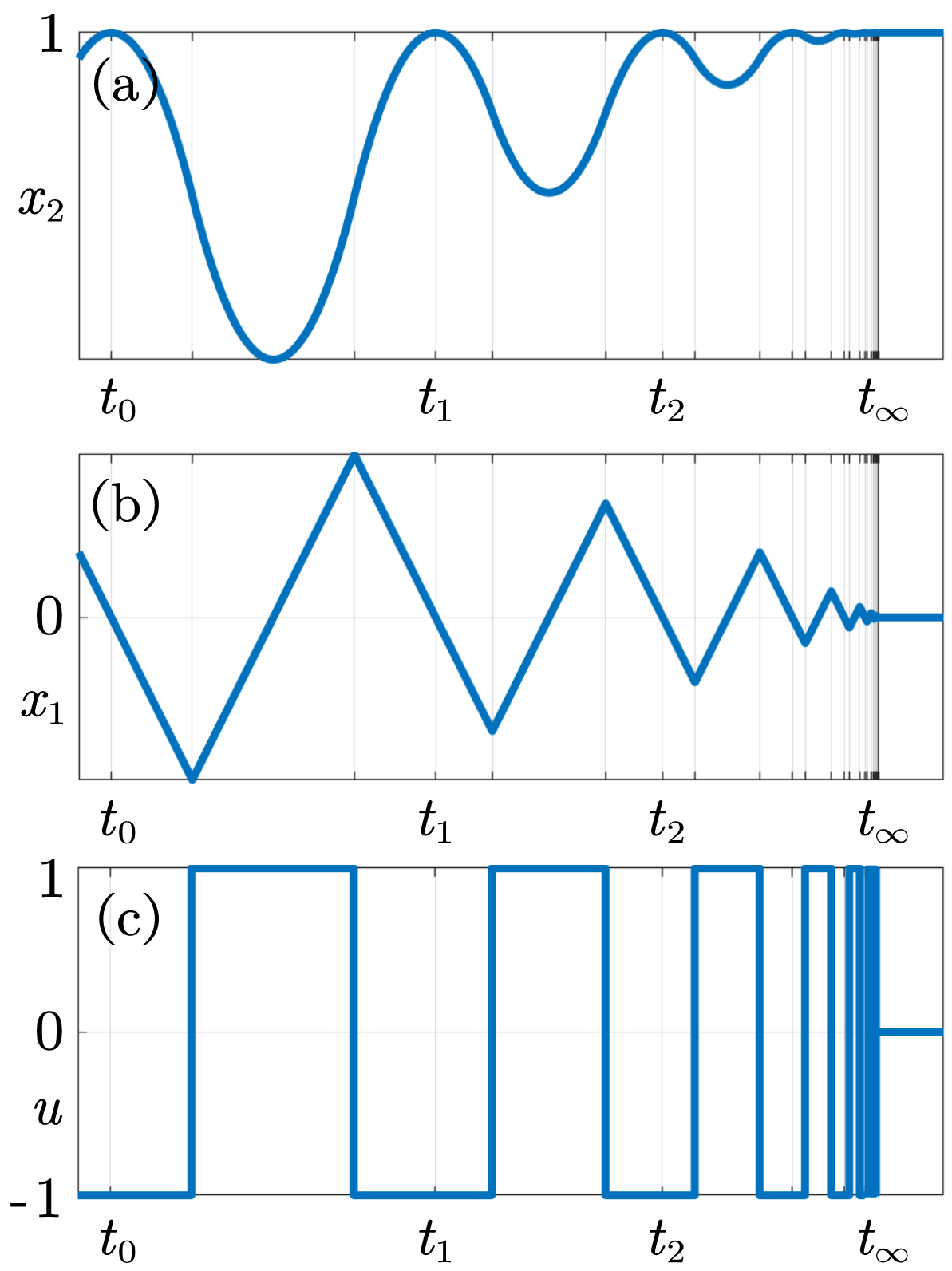}
    \caption{Chattering phenomena in time-optimal control for 4th-order chain-of-integrator systems with constraints on $u$ and $x_2$.}
    \label{fig:n4s2_chattering_reason_A}
\end{figure}

\subsection{Why Deriving the Existence/Non-Existence of Chattering through PMP-Based Necessary Conditions is Challenging?}

 Now, we try to derive the existence/non-existence of chattering based on PMP-based necessary conditions, which will be shown challenging.

The Hamiltonian of problem \eqref{eq:problem_n4s2_A} is
\begin{equation}
    \hamilton\left(\vx\left(t\right),u\left(t\right),\lambda_0,\vlambda\left(t\right),\veta\left(t\right),t\right)
    =\lambda_0+\lambda_1u+\sum_{k=2}^{n}\lambda_k x_{k-1}+\eta\left(x_2-1\right).
\end{equation}
The costate vector $\vlambda$ satisfies the Hamilton's equations, i.e.,
\begin{equation}
    \begin{dcases}
        \dot{\lambda}_2=-\lambda_3+\eta,\\
        \dot{\lambda}_n=0,\\
        \dot{\lambda}_k=-\lambda_{k+1},\,k\in\left\{1,3,4,5,\dots,n-1\right\}.\\
    \end{dcases}
\end{equation}
Furthermore, $\forall t\in\left[0,\tf\right]$, it holds that
\begin{equation}
    \begin{dcases}
        \lambda_0\geq0,\,\left(\lambda_0,\vlambda\left(t\right)\right)\not=\vzero,\\
        \eta\left(t\right)\geq0,\,\eta\left(t\right)\left(x_2\left(t\right)-1\right)=0,\\
        \hamilton\left(t\right)\equiv0.
    \end{dcases}
\end{equation}

PMP implies that it holds a.e. that
\begin{equation}\label{eq:control_PMP_B}
    u\left(t\right)=\begin{dcases}
        -1,&\lambda_1\left(t\right)>0,\\
        0,&\lambda_1\left(t\right)\equiv0 \text{ holds for a period},\\
        1,&\lambda_1\left(t\right)<0.\\
    \end{dcases}
\end{equation}
Specifically, in a period, $u\equiv0$ if and only if a constrained arc occurs where $x_2\equiv1$.

Furthermore, a junction of the costate vector $\vlambda$ occurs when the state constraint $x_2\leq1$ switches between active and inactive. Specifically, at junction time $t_i$, $i\in\N$, it holds that
\begin{equation}\label{eq:costate_junction_A}
    \begin{dcases}
        \lambda_2\left(t_i^+\right)\leq\lambda_2\left(t_i^-\right),\\
        \lambda_k\left(t_i^+\right)=\lambda_k\left(t_i^-\right).
    \end{dcases}
\end{equation}

Consider the trajectory in $\left[t_0,t_\infty\right]$. Denote
\begin{equation}
    \begin{dcases}
        \lambda_{0k}\triangleq\lambda_k\left(t_0^+\right),\,\forall k\in\left\{1,2,\dots,n\right\},\\
        \mu_i=\lambda_2\left(t_i^-\right)-\lambda_2\left(t_i^+\right)\geq0,\,\forall i\in\N^*.
    \end{dcases}
\end{equation}
Then costates in $\left(t_0,t_\infty\right)$ are
\begin{equation}
    \begin{dcases}
        \lambda_k\left(t\right)=\sum_{j=k}^{n}\frac{\lambda_{0j}}{\left(j-k\right)!}\left(t_0-t\right)^{j-k},\,k\in\left\{3,4,\dots,n\right\},\\
        \lambda_2\left(t\right)=\sum_{j=2}^{n}\frac{\lambda_{0j}}{\left(j-2\right)!}\left(t_0-t\right)^{j-2}-\sum_{i=1}^{\infty}\mu_i\delta_{\left(t_i,t_\infty\right)}\left(t\right),\\
        \lambda_1\left(t\right)=\sum_{j=1}^{n}\frac{\lambda_{0j}}{\left(j-1\right)!}\left(t_0-t\right)^{j-1}-\sum_{i=1}^{\infty}\mu_i\left(t_i-t\right)\delta_{\left(t_i,t_\infty\right)}\left(t\right),
    \end{dcases}
\end{equation}
where
\begin{equation}
    \delta_{\left(t_i,t_\infty\right)}\left(t\right)=\begin{dcases}
        1,&t\in\left(t_i,t_\infty\right),\\
        0,&\text{otherwise}.
    \end{dcases}
\end{equation}
In other words, $\forall k\geq3$, $\lambda_k\left(t\right)$ is a polynomial of order $\left(n-k-1\right)$. $\forall k\leq2$, $\lambda_k\left(t\right)$ is a piecewise polynomial of order $\left(n-k-1\right)$ due to the junction condition \eqref{eq:costate_junction_A}.

For convenience, let
\begin{equation}
    \lambda_{1i}\left(t\right)=\sum_{j=1}^{n}\frac{\lambda_{0j}}{\left(j-1\right)!}\left(t_0-t\right)^{j-1}-\sum_{p=1}^{i-1}\mu_p\left(t_p-t\right),\,i\in\N^*.
\end{equation}
Then,
\begin{equation}
    \lambda_{1}\left(t\right)=\lambda_{1i}\left(t\right),\,\forall i\in\N^*,\,t\in\left(t_{i-1},t_{i}\right).
\end{equation}

According to \eqref{eq:control_chattering_A} and \eqref{eq:control_PMP_B}, it holds that
\begin{equation}
    \lambda_{1i}\left(\frac14t_i+\frac34t_{i-1}\right)=\lambda_{1i}\left(\frac34t_i+\frac14t_{i-1}\right)=0,\,\forall i\in\N.
\end{equation}
In other words, $\forall i\in\N$, $\exists\hat{t}_i\not\in\left[t_{i-1},t_{i}\right]$, s.t.
\begin{equation}
    \lambda_{1i}\left(t\right)=\left(t-\frac14t_i-\frac34t_{i-1}\right)\left(t-\frac34t_i-\frac14t_{i-1}\right)p_i\left(t\right),
\end{equation}
where $p_i$ is a polynomial of order no higher than $n-3$. Furthermore, 
\begin{equation}
    \forall t\in\left(t_{i-1},t_i\right),\,p_i\left(t\right)>0.
\end{equation}

Therefore, a sufficient condition for the non-existence of chattering is $\mathcal{C}=\varnothing$, where
\begin{equation}\label{eq:problem_n4s2_A_C}
    \mathcal{C}=\left\{
    \begin{aligned}
        &\left(\left(\mu_i\right)_{i\in\N^*},\left(t_i\right)_{i\in\N},\left(p_i\left(\bullet\right)\right)_{i\in\N},\left(\lambda_{1i}\left(\bullet\right)\right)_{i\in\N}\right):\\
        &\qquad t_{i}<t_{i+1},\,\lim_{i\to\infty}t_i<\infty,\,\lambda_{0n}\not=0,\,\mu_i\geq0,\\
        &\qquad p_i\left(\bullet\right)\text{ is a polynomial of order no higher than }n-3,\\
        &\qquad\forall t\in\left(t_{i-1},t_i\right),\,p_i\left(t\right)>0,\\
        &\qquad\forall t\in\R,\,\lambda_{1i}\left(t\right)=\lambda_{0n}\frac{\left(-1\right)^{n-1}}{\left(n-1\right)!}\left(t-\frac14t_i-\frac34t_{i-1}\right)\left(t-\frac34t_i-\frac14t_{i-1}\right)p_i\left(t\right),\\
        &\qquad\forall t\in\R,\,\lambda_{1\,i+1}\left(t\right)=\lambda_{1i}\left(t\right)+\mu_i\left(t-t_i\right)
    \end{aligned}\right\}.
\end{equation}

It is significantly challenging to prove that $\mathcal{C}_\pm=\varnothing$. A key step is deriving a recursive expression for $\left(t_i\right)_{i\in\N}$. However, the above key step requires eliminating the polynomial $p_i$, which involves more complex computations. As far as we have tried, it is too complex to obtain the recursive expression for $\left(t_i\right)_{i\in\N}$ based on \eqref{eq:problem_n4s2_A_C}. In contrast, our proposed Corollary 4 can directly derive the recursive expression for $\left(t_i\right)_{i\in\N}$ in a compact and simple form.

\subsection{Why Deriving the Existence/Non-Existence of Chattering through the Proposed State-Centric Necessary Condition is Possible?}

 Denote
\begin{equation}
    \tau_i=t_\infty-t_{i}.
\end{equation}
Then, chattering is allowed only if $\tau_0\in\R_{++}$ and $\lim_{i\to\infty}\tau_i=0$.

According to Corollary 4, the recursive expression for $\left(\tau_i\right)_{i\in\N^*}$ is
\begin{equation}\label{eq:problem_n4s2_A_tau}
    \det\left(\left(\tau_{i-k}+3\tau_{i-k-1}\right)^{j+1}-3\left(3\tau_{i-k}+\tau_{i-k-1}\right)^{j+1}+3\left(\tau_{i-k+1}+3\tau_{i-k}\right)^{j+1}-\left(3\tau_{i-k+1}+\tau_{i-k}\right)^{j+1}\right)_{j\in\left[n-2\right],k\in\left[n-2\right]}=0.
\end{equation}
Once the initial value $\left(\tau_i\right)_{i=1}^{n-2}$ is given, the whole sequence $\left(\tau_i\right)_{i\in\infty}$ is determined. In other words, a sufficient condition for the non-existence of chattering is $\widehat{\mathcal{C}}=\varnothing$, where
\begin{equation}\label{eq:problem_n4s2_A_hatC}
    \widehat{\mathcal{C}}=\left\{\left(\tau_i\right)_{i\in\N^*}:\tau_i>\tau_{i+1}>0,\,\lim_{i\to\infty}\tau_i=0,\,\text{\eqref{eq:problem_n4s2_A_tau} holds}\right\}.
\end{equation}

Evidently, the condition on \eqref{eq:problem_n4s2_A_hatC} is much simpler than the condition on \eqref{eq:problem_n4s2_A_C}. Specifically, the condition on \eqref{eq:problem_n4s2_A_hatC} requires analyzing the convergence of a series for the recursive expressions \eqref{eq:problem_n4s2_A_tau} in the form of a system of polynomial equations.

\subsection{Deriving the Non-Existence of Chattering through the Proposed State-Centric Necessary Condition in 4th-Order Problem}

 Based on the proposed state-centric necessary condition, we can even directly prove that $\widehat{\mathcal{C}}=\varnothing$ when $n=4$; hence, chattering does not exist in the optimal control of problem \eqref{eq:problem_n4s2_A}. Note that the above claim is a novel theoretical result.

Let $n=4$. Then, the recursive expression \eqref{eq:problem_n4s2_A_tau} for $\left(\tau_i\right)_{i\in\N^*}$ is
\begin{equation}
    \begin{aligned}
        &\det\left(\left(\tau_{i-k}+3\tau_{i-k-1}\right)^{j+1}-3\left(3\tau_{i-k}+\tau_{i-k-1}\right)^{j+1}+3\left(\tau_{i-k+1}+3\tau_{i-k}\right)^{j+1}-\left(3\tau_{i-k+1}+\tau_{i-k}\right)^{j+1}\right)_{j\in\left[2\right],k\in\left[2\right]}\\
        =&144\left(\tau_{i-1}-\tau_{i-3}\right)\left(\tau_{i}-\tau_{i-2}\right)f\left(\tau_{i-1}-\tau_{i};\tau_{i-2}-\tau_{i-1},\tau_{i-3}-\tau_{i-2}\right)=0,
    \end{aligned}
\end{equation}
where
\begin{equation}
    f\left(z;z_1,z_2\right)=\left(z_1-z_2\right)z^2+\left(z_1^2+z_1z_2-z_2^2\right)z-z_2\left(z_1^2+z_1z_2-z_2^2\right).
\end{equation}
Note that $\tau_{i}<\tau_{i-1}<\tau_{i-2}<\tau_{i-3}$. Let $z_i=\tau_{i-1}-\tau_{i}>0$. Then,
\begin{equation}\label{eq:problem_n4s2_A_fz0}
    f\left(z_i;z_{i-1},z_{i-2}\right)=0.
\end{equation}

$\lim_{i\to\infty}\tau_i=0$ implies that $\lim_{i\to\infty}z_i=0$. Without loss of generality, let $z_1>z_2$. Then,
\begin{equation}
    \begin{dcases}
        f\left(z_2;z_1,z_2\right)=\left(z_1-z_2\right)^2z_2>0,\\
        f\left(0;z_1,z_2\right)=-z_2\left(z_1^2+z_1z_2-z_2^2\right)<0.
    \end{dcases}
\end{equation}
Note that $f\left(z;z_1,z_2\right)$ is a quadratic function of $z$. Therefore, $f\left(z;z_1,z_2\right)=0$ has two roots in $\left(-\infty,0\right)$ and $\left(0,z_2\right)$, respectively. Since $z_3>0$ and $f\left(z_3;z_1,z_2\right)=0$, it holds that $0<z_3<z_2<z_1$. For the same reason recursively, it holds that
\begin{equation}
    0<z_i<z_{i-1},\,\forall i\geq2.
\end{equation}

Let $r_i=1-\frac{z_{i+1}}{z_i}\in\left(0,1\right)$. Then, \eqref{eq:problem_n4s2_A_fz0} implies that
\begin{equation}\label{eq:problem_n4s2_A_ri}
    g\left(r_{i+1},r_i\right)\triangleq r_{i+1}^2-\left(3+\frac{1}{r_i\left(1-r_i\right)}\right)r_{i+1}+1=0.
\end{equation}
Note that
\begin{equation}
    \begin{dcases}
        g\left(0,r_i\right)=1>0,\\
        g\left(r_i,r_i\right)=-\frac{r_i\left(2-r_i\right)^2}{1-r_i}<0,\\
        g\left(1,r_i\right)=-1-\frac{1}{r_i\left(1-r_i\right)}<0.
    \end{dcases}
\end{equation}
\eqref{eq:problem_n4s2_A_ri} has two roots in $\left(0,r_i\right)$ and $\left(1,+\infty\right)$, respectively. Hence, $0<r_{i+1}<r_i<1$. Denote $r_\infty\triangleq\lim_{i\to\infty}r_i\in\left[0,1\right]$. Then, $g\left(r_\infty,r_\infty\right)$ implies that $r_\infty=0$. Note that
\begin{equation}
    r_{i+1}=\frac{1}{2}\left[-\left(3+\frac{1}{r_i\left(1-r_i\right)}\right)+\sqrt{\left(3+\frac{1}{r_i\left(1-r_i\right)}\right)^2-4}\right]=r_i-4r_i^2+\mathcal{O}\left(r_i^3\right),\,i\to\infty.
\end{equation}
By Stolz-Ces\`aro theorem, it holds that
\begin{equation}
    \lim_{i\to\infty}i\left(\frac{z_i}{z_{i+1}}-1\right)=\lim_{i\to\infty}\frac{ir_i}{1-r_i}=\lim_{i\to\infty}\frac{i}{\frac{1}{r_i}}=\lim_{i\to\infty}\frac{i+1-i}{\frac{1}{r_{i+1}}-\frac{1}{r_i}}=\lim_{i\to\infty}\frac{r_i^2\left(1+\mathcal{O}\left(r_i\right)\right)}{4r_i^2+\mathcal{O}\left(r_i^3\right)}=\frac{1}{4}\in\left(0,1\right).
\end{equation}
By Raabe-Duhamel's test, it holds that
\begin{equation}
    \infty=\sum_{i=1}^{\infty}z_i=\tau_0-\tau_\infty=t_\infty-t_0<\infty,
\end{equation}
which is a contradiction.

Therefore, the set $\widehat{\mathcal{C}}$ defined in \eqref{eq:problem_n4s2_A_hatC} is empty. Therefore, chattering does not exist in the optimal control of problem \eqref{eq:problem_n4s2_A} when $n=4$.

\subsection{Summary}

 In this appendix, we provide an example to compare the proposed state-centric necessary condition and PMP-based necessary conditions.
\begin{enumerate}
    \item We show that it is challenging to derive the existence/non-existence of chattering based on PMP-based necessary conditions in the considered optimal control problem. The key step is to derive the recursive expression for junction times, which involves complex computations.
    \item We show that the proposed state-centric necessary condition can directly derive the recursive expression for junction times in a compact and simple form, which is much simpler than the condition based on PMP-based necessary conditions. Specifically, we prove that chattering does not exist in the considered optimal control problem of order 4.
\end{enumerate}

\end{document}